\documentclass[11pt]{amsproc}

\usepackage{mathtext}
\usepackage[cp1251]{inputenc}

\usepackage{bm}
\usepackage[dvips]{graphicx}
\usepackage{amsmath}
\usepackage{amssymb}
\usepackage{amsxtra}

\usepackage{epsfig}
\usepackage{epic}
\usepackage{eepic}
\usepackage{graphics}
\usepackage{graphicx}
\usepackage{subfigure}

\usepackage{caption}
\def\N{{{\Bbb N}}}
\def\Z{{{\Bbb Z}}}
\def\T{{{\Bbb T}}}
\def\R{{\Bbb R}}

\def\l{{\lambda }}
\def\a{{\alpha }}
\def\D{{\Delta }}

\def\a{{\alpha}}

\def\d{{\delta}}

\def\s{{\sigma}}
\def\vp{{\varphi}}

\def\g{{\gamma }}
\def\w{{\omega }}

\def\){\right)}
\def\({\left(}
\def\supp{\operatorname{supp}}

\def\sinc{\operatorname{sinc}}

\numberwithin{equation}{section}
\newtheorem{corollary}{Corollary}[section]
\newtheorem{lemma}{Lemma}[section]
\newtheorem{theorem}{Theorem}[section]
\newtheorem{proposition}{Proposition}[section]
\newtheorem{remark}{Remark}[section]

\newtheorem*{assuma}{Assumption A}

\theoremstyle{definition} \newtheorem{example}{Example}[section]

\par

\begin{document}

\title[]{Approximation by Kantorovich-type operators in general Banach spaces}

\author[Yurii
Kolomoitsev]{Yurii
Kolomoitsev$^{\text{a, 1}}$}
\address{Institute for Numerical and Applied Mathematics, G\"ottingen University, Lotzestr. 16-18, 37083 G\"ottingen, Germany}
\email{kolomoitsev@math.uni-goettingen.de}

\thanks{$^\text{a}$Institute for Numerical and Applied Mathematics, G\"ottingen University, Lotzestr. 16-18, 37083 G\"ottingen, Germany}

\thanks{$^1$Supported by the German Research Foundation, project KO 5804/3-1}


\thanks{E-mail address: kolomoitsev@math.uni-goettingen.de}

\date{\today}
\subjclass[2010]{41A05, 41A10, 41A25, 41A27, 42A15, 46B42, 46E30} \keywords{Banach lattices, Kantorovich-type sampling operators, Moduli of smoothness, Steklov averages, Direct and inverse theorems, Strong converse inequalities, Best approximations}

\begin{abstract}
This paper investigates the approximation properties of linear Kantorovich-type
sampling operators in the setting of general Banach lattices $X$ on the torus
$\mathbb{T}$ and the real line $\mathbb{R}$. Under a natural assumption on the uniform
boundedness of the Steklov averaging operators, we establish  direct and
inverse approximation estimates, as well as strong converse inequalities.
Our framework does not require the underlying spaces to be translation-invariant, thereby covering a wide variety of classes and extending the estimates for Kantorovich-type operators previously known primarily for Lebesgue spaces $L_p$.
\end{abstract}

\maketitle

\section{Introduction}

The study of direct and inverse theorems for approximation by trigonometric
polynomials and entire functions of exponential type is a fundamental topic
in approximation theory and Fourier analysis. Historically, these results
were established in the space $C$ of uniformly continuous functions and the
Lebesgue spaces $L_p$ with $1\le p<\infty$, using both best polynomial
approximations and various convolution processes (see, e.g.,~\cite{DL, timan, TB, Z}).
Since then, these classic theorems have been extended to more general
approximation methods and wider classes of function spaces, including
non-translation-invariant Banach spaces such as weighted Lebesgue, Lorentz,
Orlicz, and variable exponent spaces (see, e.g.,~\cite{D98, IG06, KT26, KY10, V23, Vo23}).

A significant development in this field was presented in~\cite{V23}, where
direct and inverse theorems for best approximations and convolution operators
were established in general, non-translation-invariant Banach lattices $X$.
The key requirement there was the uniform boundedness of the Steklov averaging
operators. This natural assumption, in contrast to previous studies,
significantly expanded the class of admissible spaces and enabled the use
of convenient and flexible moduli of smoothness based on the Steklov averages.
Recently, this approach was extended in~\cite{K26} to obtain direct, inverse,
and even strong converse inequalities for sampling operators in the same general framework.

Together with classical approximation methods, the study of Kantorovich-type
sampling operators has also attracted significant attention. Diverse approximation
results for these semi-discrete processes have been established across a wide
range of function spaces, including weighted $L_{p}$ spaces~\cite{KS21}, Wiener
classes~\cite{KKS20}, Orlicz spaces~\cite{CV14, CPV23, BVBS07}, and
variable exponent spaces~\cite{D25}.
Beyond their theoretical importance, Kantorovich-type
operators carry substantial practical relevance:
compared to classical tools, they utilize local integral averages instead of exact point values, making them well-suited for noisy data and physical measurements.

Although Kantorovich-type operators have been extensively studied in specific
function spaces, a unified framework for their analysis remains underdeveloped.
The main goal of this paper is to fill this gap by investigating these
approximation processes on the torus $\mathbb{T}$ and the real line $\mathbb{R}$
in the setting of general Banach lattices. In particular, in both cases,
we establish direct and inverse inequalities as well as strong converse
inequalities. First, we obtain the corresponding approximation results in an
abstract framework under natural conditions on the underlying operators.
Then, we demonstrate how these conditions can be effectively verified
for standard classes of Kantorovich-type operators.

We emphasize that, unlike most existing results in this direction, we establish
error estimates in general Banach lattices that are not necessarily
translation-invariant, thereby covering a wide range of spaces not previously
considered for Kantorovich-type operators. Moreover, the strong converse
inequalities established in the present paper hold in their strong form,
whereas the corresponding estimates in~\cite{AD24, D25, KP21} contain certain
additional terms.
To illustrate the applicability of our general results, we mention the following
simple trigonometric Kantorovich-type operator constructed via the standard
symmetric Steklov averages:
\begin{equation*}
  K_{n} f(x) := \frac{1}{2n+1}\sum_{k=0}^{2n} A_{1/n}f\left(\tfrac{2\pi k}{2n+1}\right) D_n\left(x-\tfrac{2\pi k}{2n+1}\right),
\end{equation*}
where $A_{h}f(x)=\frac{1}{h}\int_{- {h}/{2}}^{{h}/{2}} f(x+t)\,dt$ and $D_n$ is the
classical Dirichlet kernel. Assuming that both the Steklov operators and the
Fourier partial sums are uniformly bounded in $X(\mathbb{T})$, we show as a
direct consequence that the following equivalence holds
for all $f\in X(\mathbb{T})$:
\begin{equation*}
  \|f-{K}_{n} f\|_{X(\mathbb{T})} \asymp \|(I-A_{1/n}) f\|_{X(\mathbb{T})}
\end{equation*}
(see Proposition~\ref{cor2+}).

The paper is organized as follows. Section~2 contains necessary notation and auxiliary results; namely, in Subsection~2.1 we recall the definition and basic properties of Banach lattices, while in Subsection~2.2 we introduce the corresponding moduli of smoothness and the errors of best approximation and mention some of their properties. Section~3 is devoted to the main results in the periodic case, where Subsection~3.1 presents the direct and inverse estimates, Subsection~3.2 establishes the strong converse inequalities, and Subsection~3.3 provides concrete examples. Finally, Section~4 focuses on the non-periodic case and follows a similar structure: Subsection~4.1
presents direct and inverse inequalities, Subsection~4.2 establishes strong converse inequalities, and Subsection~4.3 illustrates these results with examples.

Throughout the paper, $c$, $C$, and $C_j$, $j=1,2,\dots$, denote positive constants which may vary from line to line and are independent of the essential parameters, typically $n$, $\s$, and $f$.

\section{Notation and auxiliary results}

\subsection{Banach lattices}

Let $\Omega$ be either the unit circle $\mathbb{T}$, identified with the interval $[-\pi, \pi)$, or the real line $\mathbb{R}$. We denote by $\mu$ the Lebesgue measure on $\mathbb{R}$ and the normalized Lebesgue measure on $\mathbb{T}$.

Let $X = X(\Omega)$ be a Banach function lattice on $\Omega$ with respect to the measure $\mu$, that is, a Banach space of measurable functions on $\Omega$ with the norm $\|\cdot\|_X$ satisfying the following properties:
\begin{itemize}
  \item[$1)$] if $f$ is measurable and $g \in X$ satisfy $|f| \le |g|$, then $f \in X$ and $\|f\|_X \le \|g\|_X$,
  \item[$2)$] whenever $f_n\in X$, with $\sup_n \|f_n\|_X<\infty$, and $0\le f_n \uparrow f$, then $f\in X$ and $\|f_n\|_X \to \|f\|_X$,
  \item[$3)$] if $E \subset \Omega$ is measurable and~$\mu(E) < \infty$, then $\chi_E\in X$ and there exists
  a constant $c_E>0$ such that  $\int_E |f(x)|d\mu(x)\le c_E\|f\|_X$.
\end{itemize}

We denote by $X'$ the associate space of $X$ with the norm
\begin{equation*}
  \|g\|_{X'} := \sup_{\|f\|_X \le 1} \bigg|\int_\Omega g(x) \overline{f(x)} \, d\mu(x)\bigg|.
\end{equation*}
Note that $X'$ is also a Banach lattice on $\Omega$ with the norm $\|\cdot\|_{X'}$ satisfying the above properties 1)--3), see~\cite[Chapter~1.2]{BS88}.

The space $X$ is called \emph{translation-invariant} if for all $t \in \Omega$ and $f \in X$, the function $f(\cdot + t)$ belongs to $X$ and satisfies
$\|f(\cdot + t)\|_X = \|f\|_X$. We say that $f$ belongs to the Sobolev-type space $X^r$, $r \in \mathbb{N}$, if $f^{(r-1)}$ is (locally) absolutely continuous on $\Omega$ and $f^{(r)} \in X$.

As usual, $\mathcal{T}_n$ denotes the set of all trigonometric polynomials of degree at most $n$.
The class of band-limited functions  $\mathcal{B}_X^\s=\mathcal{B}_X^\s(\R)$ is given by
$$
\mathcal{B}_X^\s:=\left\{\varphi  \in X\cap \mathcal{S}'(\R)\,:\,\supp\;\widehat{\varphi} \subset [-\s,\s]\right\},
$$
where $\widehat{\varphi}$ denotes the Fourier transform of $\varphi$ in the sense of tempered distributions.
For $f \in L_1(\Omega)$, we define its Fourier transform (or Fourier coefficients if $\Omega = \mathbb{T}$) by
$$
\widehat{f}(\xi)=\mathcal{F}f(\xi)
:=
\frac1{2\pi}\int_\Omega f(x)\,e^{-i \xi x}\,dx,
\quad
\xi \in \mathbb{R}
\ \ (\xi \in \mathbb{Z} \text{ if } \Omega = \mathbb{T}).
$$
For measurable functions $f$ and $g$ on $\Omega$, their convolution is defined by
$$
(f * g)(x)
:=
\int_\Omega f(t)\,g(x - t)\,d\mu(t),
\quad x \in \Omega,
$$
whenever the integral is well defined.

In what follows, let $\Phi_n$ be a linear subspace of $X$ parameterized
by $n \in \mathbb{N}$ (or $n > 0$). This family $(\Phi_n)_n$ is assumed to be nested, meaning that $\Phi_m \subset \Phi_n$ whenever $n \ge m$, with the convention $\Phi_0 = \{0\}$.
Furthermore, elements of $\Phi_n$ satisfy the Bernstein-type inequality
\begin{equation*}
  \|\varphi^{(r)}\|_X \le (Bn)^r \|\varphi\|_X, \quad r \in \mathbb{N}, \quad \varphi \in \Phi_n,
\end{equation*}
where the positive constant $B = B_X$ depends only on $X$. We note that if the underlying space $X$ satisfies certain natural conditions, these properties are shared by all classical families of approximating subspaces, such as trigonometric polynomials, band-limited functions, and shift-invariant spaces generated by a smooth function.


For a locally integrable function $f$ and a parameter $h > 0$, we define the Steklov averaging operators $A_h$ and $\dot{A}_h$ by
\begin{equation*}
A_h f(x)
:=
\frac{1}{h}
\int_{x - \frac{h}{2}}^{x + \frac{h}{2}} f(t)\,dt,
\quad
\dot{A}_h f(x)
:=
A_h f\!\left(x + \frac{h}{2}\right).
\end{equation*}
We will also need the so-called higher-order Steklov averaging operators defined by
\begin{equation*}
  A_{h,r} f:= \sum_{k=1}^r (-1)^{k+1}\binom{r}{k} A_h^k f = f-(I-A_h)^rf
\end{equation*}
and
\begin{equation*}
   \dot A_{h,r} f:= \sum_{k=1}^r (-1)^{k+1}\binom{r}{k} \dot A_h^k f = f-(I-\dot A_h)^rf
\end{equation*}

The results of the present paper are obtained under the following assumption on the Banach lattice $X$.

\begin{assuma}\label{asumpA}
For all $f \in X$ and  $h > 0$, the Steklov average $A_h f$ belongs to $X$ and
$$
\sup_{h>0}\sup_{\|f\|_X\le 1}\|A_h f\|_X<\infty.
$$
\end{assuma}

We note that if $X$ satisfies Assumption~A, then its associate space $X'$ also enjoys the same property. This follows from the representation  of the norm $\|\cdot\|_{X'}$ and the Fubini--Tonelli theorem.

A function $F : \Omega \to [0, +\infty]$ is called \emph{symmetrically decreasing} if it is even and non-increasing on $\Omega \cap [0,+\infty)$. Given a function $K : \Omega \to \mathbb{C} \cup \{\infty\}$, we denote by $K^*$ a symmetrically decreasing majorant of $K$, that is, a symmetrically decreasing function such that
$$
|K(x)| \le K^*(x)
\quad \text{for all } x \in \Omega.
$$
We denote by $\mathcal{R}$ the class of integrable symmetrically decreasing functions on $\Omega$, and by $\mathcal{R}^*$ the class of all functions admitting an integrable symmetrically decreasing majorant.


\begin{lemma}\label{lemS} {\sc (See~\cite{V23}.)}
Let $X$ be a Banach lattice satisfying Assumption~A. If $f \in X$ and $K \in \mathcal{R}^*$, then the convolution $f*K$ exists and is finite almost everywhere, belongs to $X$, and satisfies the inequality
\begin{equation*}
\|f*K\|_X
\le
C\,\|K^*\|_{L_1(\Omega)}\,\|f\|_X,
\end{equation*}
where the constant $C$ depends only on $X$.
\end{lemma}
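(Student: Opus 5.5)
The plan is to write a symmetrically decreasing integrable majorant $K^*$ as a superposition of normalized indicator functions of centered intervals, which turns $|f|*K^*$ into a superposition of Steklov averages of $|f|$. Assumption~A then gives the bound directly. By the lattice property 1) it suffices to treat $f\ge 0$ and $K=K^*$, since $|f*K|\le |f|*K^*$ pointwise and $\||f|\|_X=\|f\|_X$. So the first step is to fix $f\ge0$ and a symmetrically decreasing $K^*\in L_1(\Omega)$.

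Next I would use the layer-cake representation. On $\mathbb{R}$, for each $s>0$ the level set $\{K^*>s\}$ is a centered interval of length $h(s)\le \|K^*\|_{L_1}/s$, up to endpoints. This gives
\[
K^*(x)=\int_0^\infty \chi_{\{|x|<h(s)/2\}}\,ds \quad\text{(a.e.)},
\]
and therefore, by Tonelli,
\[
(f*K^*)(x)=\int_0^\infty h(s)\,A_{h(s)}f(x)\,ds .
\]
On $\mathbb{T}$ the same holds with $h(s)\le 2\pi$ and the normalized measure. The $h(s)=2\pi$ case is just the mean of $f$ times a constant, and the Steklov average of full length $2\pi$ is still covered by Assumption~A. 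One also has $\int_0^\infty h(s)\,ds=\|K^*\|_{L_1}$, with the appropriate normalization on $\mathbb{T}$.

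Next comes a.e. finiteness. Property 3) gives $\int_E A_h f\le c_E\|A_hf\|_X\le c_E C\|f\|_X$ for each bounded $E$. Integrating the representation over $E$ and applying Tonelli gives $\int_E f*K^*\le c_E C\|f\|_X\|K^*\|_{L_1}<\infty$. Hence $f*K^*$ is finite a.e. and locally integrable.

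The main obstacle is to bound the $X$-norm of a continuous superposition $\int_0^\infty h(s)A_{h(s)}f\,ds$. A Bochner-integral argument is not available, because $s\mapsto A_{h(s)}f$ need not be measurable into $X$ when $X$ is not separable. I would use duality through the associate space instead. Since $X=X''$ isometrically (Lorentz--Luxemburg, which uses property 2)), take $g\ge0$ with $\|g\|_{X'}\le1$ and apply Tonelli:
\[
\int (f*K^*)g\,d\mu=\int_0^\infty h(s)\int (A_{h(s)}f)g\,d\mu\,ds\le \int_0^\infty h(s)\|A_{h(s)}f\|_X\,ds\le C\|K^*\|_{L_1}\|f\|_X .
\]
Taking the supremum over $g$ gives $f*K^*\in X$ with the claimed norm bound, where $C$ is the constant in Assumption~A. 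This yields both membership in $X$ and the inequality of Lemma~\ref{lemS}. Should one wish to avoid the Lorentz--Luxemburg theorem, an alternative is to truncate to finitely many levels. One approximates $K^*$ from below by step functions $\sum_j c_j\chi_{[-h_j/2,h_j/2]}$, bounds each finite sum by the triangle inequality and Assumption~A, and passes to the limit with the Fatou property 2).
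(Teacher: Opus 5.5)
Your proof is correct. The paper does not prove this lemma itself; it only quotes it from \cite{V23}, so there is no in-paper argument to match. What you give is the natural self-contained proof: the Banach-lattice version of Stein's argument for radially decreasing kernels. The layer-cake decomposition $K^*=\int_0^\infty \chi_{\{|x|<h(s)/2\}}\,ds$ turns $|f|*K^*$ into $\int_0^\infty h(s)A_{h(s)}|f|\,ds$. On $\mathbb{T}$ this carries a factor $\frac{1}{2\pi}$, matching $\|K^*\|_{L_1(\mathbb{T})}=\frac{1}{2\pi}\int_0^\infty h(s)\,ds$. Assumption~A then controls each piece.

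Of your two ways to finish, the second is the more economical. Take the dyadic truncations $k_n=\sum_{i=1}^{n2^n}2^{-n}\chi_{\{K^*\ge i2^{-n}\}}\uparrow K^*$; each level set is a centered interval. Then $|f|*k_n\in X$ and, by the triangle inequality,
$$\||f|*k_n\|_X\le C_A\|k_n\|_{L_1}\|f\|_X\le C_A\|K^*\|_{L_1}\|f\|_X .$$
Property 2) then gives, all at once, that $|f|*K^*\in X$, the norm bound with $C=\sup_{h>0}\|A_h\|_{X\to X}$, and (via property 3)) a.e.\ finiteness. This uses nothing beyond axioms 1)--3) and Assumption~A.

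The duality route is also valid, since 1)--3) are exactly the Bennett--Sharpley axioms under which the Lorentz--Luxemburg theorem $X=X''$ holds. But it imports that theorem only to recover what the Fatou property gives directly. Likewise, your separate a.e.-finiteness step becomes redundant once $|f|*K^*\in X$ is known.
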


As noted in~\cite{V23}, a wide range of Banach lattices satisfy Assumption~A. In particular, it naturally holds in classical translation-invariant spaces (such as Lebesgue, Orlicz, Lorentz, and Morrey spaces) that possess the Fatou property~\cite{BS88, KPS82, PKJF2013}. For non-translation-invariant settings, the uniform boundedness of Steklov averages can be deduced from the boundedness of the Hardy--Littlewood maximal operator, which has been extensively studied for weighted and variable exponent spaces (see, e.g.,~\cite{CF13, HH19, L25}). Alternatively, a direct analysis of Steklov averages significantly expands the class of admissible spaces $X$. For instance, this property is satisfied in weighted Lebesgue spaces $L_{p,w}$ with $1 \le p < \infty$ under the Muckenhoupt $\mathrm{A}_p$ condition~\cite{St93}, as well as in periodic unweighted and weighted variable exponent spaces recently investigated in~\cite{V24, V25}.

\subsection{Moduli of smoothness and errors of best approximation}

If $X$ is a translation invariant Banach lattice on $\Omega$, then the integral  (classical) modulus of smoothness of a function $f\in X$
of order $r\in \N$ and step $\d>0$ is defined by
\begin{equation*}
    \w_r(f,\d)_X:=\sup_{0<h\le \d} \Vert \D_h^r f\Vert_X,
\end{equation*}
where
$$
\D_h^r f(x):=\sum_{\nu=0}^r\binom{r}{\nu}(-1)^{\nu} f(x+(r-\nu)h),
$$
$\binom{r}{\nu}=\frac{r (r-1)\dots (r-\nu+1)}{\nu!},\quad \binom{r}{0}=1$.

By virtue of~\cite[Theorem~4.6 and Corollary~3.1]{V23} and~\cite[Theorem~2.4, Ch.~6]{DL},
the following relations hold for any $f\in L_p(\Omega)$ with $1\le p<\infty$, $r\in \mathbb{N}$, and $h>0$:
\begin{equation}\label{modav1}
  \sup_{0\le t\le h}\|(I-A_t)^r f\|_{L_p} \asymp \|(I-A_h)^r f\|_{L_p} \asymp \omega_{2r}(f,h)_{L_p},
\end{equation}
\begin{equation}\label{modav2}
  \sup_{0\le t\le h}\|(I-\dot A_{t})^r f\|_{L_p} \asymp \|(I-\dot A_{h})^r f\|_{L_p} \asymp \omega_r(f,h)_{L_p},
\end{equation}
where $\asymp$ denotes a two-sided inequality with positive constants independent of $f$ and $h$. These equivalences demonstrate that the Steklov-based quantities in~\eqref{modav1} and~\eqref{modav2} replicate the essential behavior of classical moduli of smoothness in $L_p(\Omega)$ (see also Lemmas~\ref{lemm1}--\ref{lemm3}).
Since the classical modulus of smoothness is no longer appropriate and may even lose its meaning for non-translation-invariant spaces $X$, the measures of smoothness constructed via the averaging operators $A_{h}$ and ${\dot A}_{h}$ serve as an effective substitute, enabling us to investigate our problems within a more general framework of Banach lattices.

The error of best approximation of $f \in X(\mathbb{T})$ by trigonometric polynomials from $\mathcal{T}_n$ is defined as
$$
E_n(f)_X := \inf \{ \|f - T_n\|_X : T_n \in \mathcal{T}_n \}.
$$
Similarly, the error of best approximation of $f \in X(\mathbb{R})$ by
band-limited functions of order $\sigma > 0$ is given by
$$
E_\sigma(f)_X := \inf \{ \|f - g_\sigma\|_X : g_\sigma \in \mathcal{B}_X^\sigma \}.
$$
For the properties stated in the subsequent three lemmas, we refer the reader to~\cite{V23}.

\begin{lemma}\label{lemm1}
Let $X$ be a Banach lattice satisfying Assumption~A.
For $f\in X$,\, $h,\l>0$, and $r,s\in \N$, $s\le r$, the following properties hold:

\begin{enumerate}
  \item[(i)]  $\|(I-A_{h})^r f\|_X\le C\|f\|_X$,\\
                $\|(I-\dot A_{h})^r f\|_X\le C\|f\|_X$;
  \medskip


  \item[(ii)]
    $
    C^{-1}\|(I-A_{h})^r f\|_{X}\le \|(I-\dot A_{h})^{2r} f\|_{X} \le C\|(I-A_{h})^r f\|_{X};
    $

  \medskip

  \item[(iii)]  if $f\in X^{2s}$, then  $\|(I-A_{h})^r f\|_{X} \le Ch^{2s}  \|(I-A_{h})^{r-s} f^{(2s)}\|_{X}$,\\
                  if $f\in X^s$, then $\|(I-\dot A_{h})^r f\|_{X} \le Ch^{s}  \|(I-\dot A_{h})^{r-s} f^{(s)}\|_{X}$.
\end{enumerate}
In these inequalities, the constant $C$ is independent of $f$ and $h$.
\end{lemma}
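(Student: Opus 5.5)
The plan is to reduce all three parts to operator identities for the commuting family $A_h,\dot A_h$, combined with the uniform boundedness from Assumption~A and the convolution bound of Lemma~\ref{lemS}. All the operators involved are convolutions with kernels in $\mathcal{R}^*$ (or translates of them), so they commute with each other and with differentiation on $X^r$.

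\textbf{Part (i).} Expanding binomially, $(I-A_h)^r=\sum_{k=0}^r\binom{r}{k}(-1)^kA_h^k$. Each power $A_h^k$ has norm at most $M^k$, where $M$ is the constant in Assumption~A. Hence $\|(I-A_h)^r f\|_X\le (1+M)^r\|f\|_X$.

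For $\dot A_h$ this argument does not work directly, since $\dot A_h$ involves a translation and $X$ need not be translation-invariant. Instead note that $\dot A_h f=f*K$ with $K=h^{-1}\chi_{[-h,0]}$. This kernel is dominated by the symmetric decreasing majorant $h^{-1}\chi_{[-h,h]}$, whose $L_1$ norm is $2$. Lemma~\ref{lemS} then gives $\|\dot A_h\|\le 2C$ uniformly in $h$, and the binomial expansion finishes the proof as before.

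\textbf{Part (iii).} The key identities, valid for absolutely continuous $f$, are
\[
(I-\dot A_h)f(x)=-\frac1h\int_0^h\int_0^t f'(x+u)\,du\,dt ,
\]
and, by the symmetric Taylor formula,
\[
(I-A_h)f(x)=-\frac1h\int_{-h/2}^{h/2}\int_0^t(t-u)f''(x+u)\,du\,dt .
\]
Both right-hand sides are convolutions of $f'$ (respectively $f''$) with kernels supported in $[-h,h]$. Their sup norms are $O(1)$ and $O(h)$ respectively, so after multiplying by $h$ and $h^2$ they become $L_1$-normalized kernels. Concretely,
\[
(I-\dot A_h)f=h\,f'*\kappa_h,\qquad (I-A_h)f=h^2 f''*\tau_h,
\]
where $\kappa_h,\tau_h$ are majorized by $c\,h^{-1}\chi_{[-h,h]}$. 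Applying this $s$ times (respectively $s$ times with $f^{(2s)}$), commuting the convolutions past the remaining factor $(I-A_h)^{r-s}$, and invoking Lemma~\ref{lemS} gives (iii).

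\textbf{Part (ii).} I would use the algebraic relation between the two averages. Since $\dot A_h=T_{h/2}A_h$, where $T$ is translation, the symmetrized product satisfies $\dot A_h\,\dot A_h^{\vee}=A_h^2$ with $\dot A_h^\vee f=A_hf(\cdot-h/2)$. This suggests working with the factorization
\[
I-A_h^2=(I-A_h)(I+A_h).
\]
For the right inequality, I would show $(I-\dot A_h)^2=(I-A_h)\,Q_h$ with $Q_h$ a uniformly bounded convolution operator whose kernel lies in $\mathcal{R}^*$. On the Fourier side,
\[
1-\widehat{\dot A_h}=1-e^{i\xi h/2}\sinc(\xi h/2),\qquad 1-\widehat{A_h}=1-\sinc(\xi h/2),
\]
and $|1-e^{i\xi h/2}\sinc|^2\asymp 1-\sinc$ in the sense of vanishing orders: second order at $0$, and bounded away from $0$ elsewhere.

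\textbf{Main obstacle.} The real difficulty is realizing these multiplier ratios as operators with $\mathcal{R}^*$ kernels, or as finite combinations of Steklov operators, because $X$ is not translation-invariant and no multiplier theorem is available. My first attempt would be explicit algebra within the polynomial algebra generated by $A_h$, $\dot A_h$, $\dot A_h^\vee$ and $A_{2h}$. I would look for identities such as
\[
(I-\dot A_h)(I-\dot A_h^{\vee})=(I-A_h)\,P(A_h,A_{2h},\dots),
\]
together with the fact that $\dot A_h^{\vee}$ and $\dot A_h$ differ by a translation by $h$, which can itself be controlled through $\dot A$-differences. Should direct algebra fail, I would fall back on the approach of \cite{V23}: use the $K$-functional equivalences obtained from (iii) and a Jackson-type realization. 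Both $\|(I-A_h)^rf\|$ and $\|(I-\dot A_h)^{2r}f\|$ are equivalent to
\[
\inf_g\bigl(\|f-g\|_X+h^{2r}\|g^{(2r)}\|_X\bigr),
\]
taking $g=A_{h,r}^{2r}f$-type smoothings. The upper bounds come from (i) and (iii), and the lower bounds from the representation of the smoothing as an iterated difference.
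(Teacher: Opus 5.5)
Your arguments for (i) and (iii) are sound. The kernel $h^{-1}\chi_{[-h,0]}$ of $\dot A_h$ (or simply $|\dot A_hf|\le 2A_{2h}|f|$) handles the missing translation invariance. The Taylor-type kernels $\kappa_h,\tau_h$ are indeed majorized by $c\,h^{-1}\chi_{[-h,h]}$, so iterating and commuting the convolutions is legitimate. (The paper gives no proof of this lemma; it refers to \cite{V23}.)

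The gap is (ii), which you do not actually prove. You name the obstacle, but neither suggested route gets past it as described.

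\emph{The algebraic route.} It cannot give an exact identity such as $(I-\dot A_h)^2=(I-A_h)P$ or $(I-\dot A_h)(I-\dot A_h^{\vee})=(I-A_h)P$ with $P$ a polynomial in Steklov-type operators. Such a $P$ has an entire symbol, while the quotient of symbols has poles at the non-real zeros of $1-\sinc$. For example, with $t=h\xi$ the symbol of $(I-\dot A_h)(I-\dot A_h^{\vee})$ is $1-2\sinc(2t)+\sinc^2 t$. Since $\sinc(2t)=\sinc(t)\cos(t/2)$, at a zero $z_0\neq0$ of $1-\sinc$ this symbol equals $2-2\cos(z_0/2)\neq 0$.

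\emph{The $K$-functional route.} Your upper bounds via (i) and (iii) are correct. But the lower (realization) bound ``from the representation of the smoothing as an iterated difference'' is exactly where the difficulty lives. The quantity $h^{2r}g^{(2r)}$ is a combination of translated averages. Without translation invariance, the only way to bound it by $\|(I-A_h)^rf\|_X$ or $\|(I-\dot A_h)^{2r}f\|_X$ is to factor it through these operators, which is the same division of symbols.

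What is missing is a multiplier-division step with an $\mathcal{R}^*$ kernel, and in your $K$-functional setting it is actually easy.
Write $1-(1-x)^r=x\,p(x)$ and take $g=[I-(I-A_h)^r]^mf$ with $m\ge 2r+2$.
Then $\|f-g\|_X\le C\|(I-A_h)^rf\|_X$ by (i).
Moreover $h^{2r}g^{(2r)}=K_h*(I-A_h)^rf$, where $K_h=h^{-1}K(\cdot/h)$ (periodized on $\T$) and $K$ is the kernel with multiplier $\beta(t)=(-1)^rt^{2r}\sinc^m(t)\,p(\sinc t)^m(1-\sinc t)^{-r}$.
Since $1-\sinc t>0$ for $t\neq0$ and $1-\sinc t\sim t^2/24$, $\beta$ is smooth. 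As $|t|\to\infty$ we have $\beta,\beta',\beta''=O(|t|^{2r-m})$, so $\beta\in W_1^2(\R)$.
Remark~\ref{reml1} then gives $K\in\mathcal{R}^*$. Lemma~\ref{lemS}, whose majorant norm is dilation invariant, yields $h^{2r}\|g^{(2r)}\|_X\le C\|(I-A_h)^rf\|_X$.
The dotted case is identical with $g=[I-(I-\dot A_h)^{2r}]^mf$, using that $1-\frac{e^{it}-1}{it}$ has on $\R$ only a simple zero at $t=0$.
Chaining the two realizations with your upper bounds gives both inequalities in (ii).

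If you prefer the direct factorization $(I-\dot A_h)^{2r}=(I-A_h)^rQ_h$, note that the quotient symbol tends to $1$ at infinity. So $Q_h$ is $I$ plus an $\mathcal{R}^*$-convolution, not a pure $\mathcal{R}^*$-convolution as you wrote. Proving this needs a high-frequency expansion $1/(1-\sinc)=\sum_{j\le M}\sinc^j+\sinc^{M+1}/(1-\sinc)$, as in the paper's Bochner--Riesz example.
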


The next lemma gives an analogue of the direct approximation theorem in $X$  for trigonometric polynomials and band-limited functions.

\begin{lemma}\label{lemm2}
Let $X$ be a Banach lattice satisfying Assumption~A.
For $f\in X$, $\g>0$, and $r\in \N$, we have
$$
E_n(f)_X\le C\|(I-\dot A_{\g/n})^r f\|_{X},
$$
where the constant $C$ is independent of $f$ and $n$.
\end{lemma}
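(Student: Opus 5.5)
We need to prove Lemma \ref{lemm2}: $E_n(f)_X\le C\|(I-\dot A_{\g/n})^r f\|_X$. The natural route is to build a linear polynomial approximant out of convolution kernels in $\mathcal{R}^*$, so that Lemma~\ref{lemS} gives bounds in $X$ without translation invariance. I will write the argument on $\mathbb{T}$; the case of $\mathbb{R}$ with $\mathcal{B}_X^\sigma$ and $E_\sigma$ is identical, with kernels of exponential type in place of trigonometric ones.

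\textbf{Step 1: the approximant.} Fix a Jackson-type kernel $J_m\in\mathcal{T}_{m}$ with $\widehat{J_m}(0)=1$ and
$$
J_m^*(x)\le C\,m\,(1+m|x|)^{-2r-2}.
$$
Since the Steklov averages are convolutions with $h^{-1}\chi_{[-h/2,h/2]}$ (shifted by $h/2$ for $\dot A_h$), each operator $J_m*\dot A_h^k$ is convolution with the trigonometric polynomial $J_m*\dot A_h^k\delta$. Set $m\asymp n/r$ and
$$
P_n f:=J_m*\bigl(f-(I-\dot A_h)^r f\bigr)=\sum_{k=1}^r(-1)^{k+1}\binom{r}{k}J_m*\dot A_h^k f \in \mathcal{T}_n .
$$

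\textbf{Step 2: the error decomposition.} The error splits as
$$
f-P_nf=(f-J_m*f)+J_m*(I-\dot A_h)^r f .
$$
By Lemma~\ref{lemS}, the second term is bounded by $C\|(I-\dot A_h)^rf\|_X$. Equivalently, I could use Lemma~\ref{lemm1}(ii) to work with $A_h$ and even kernels.

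\textbf{Step 3: the first term (main obstacle).} The remaining task is to bound $\|f-J_m*f\|_X$ by $C\|(I-\dot A_h)^rf\|_X$ with $h=\g/n$; this is exactly the Jackson estimate again, so it cannot simply be assumed. I would try the following iteration.

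First, write $f=(I-\dot A_h)^rf+\dot A_{h,r}f$. The function $g=\dot A_{h,r}f$ lies in $X^r$, since $\dot A_h^k f$ is smooth of order $k$. Moreover
$$
h^r g^{(r)}=\sum_k c_k\,\Delta\text{-type combinations of }\dot A_h^{k-r}\text{-averages of }(I-\dot A_h)^r f,
$$
because $h\,(\dot A_h F)'=F(\cdot+h)-F$, and $F(\cdot+h)-F=h\,D\dot A_h F$ can be factored through $I-\dot A_h$. This should give $h^r\|g^{(r)}\|_X\le C\|(I-\dot A_h)^rf\|_X$.

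Second, bound $\|g-J_m*g\|_X\le C m^{-r}\|g^{(r)}\|_X$. For this, represent $g-J_m*g$ via Taylor's formula as $g^{(r)}*K$ with $K^*\in\mathcal{R}$ and $\|K^*\|_{L_1}\le Cm^{-r}$, which the decay of $J_m^*$ provides; then apply Lemma~\ref{lemS}.

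The delicate point is that the Taylor remainder kernel must be dominated by a symmetrically decreasing integrable majorant. This requires choosing $J_m$ with vanishing moments up to order $r-1$, for instance a linear combination of dilated Jackson kernels, or a de la Vallée Poussin-type kernel composed with itself.

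\textbf{Step 4: conclusion.} Combining Steps 2 and 3 gives
$$
\|f-P_nf\|_X\le C\|(I-\dot A_h)^rf\|_X ,
$$
and since $P_nf\in\mathcal{T}_n$, this bounds $E_n(f)_X$. Finally, replacing $h$ by $\g/n$ only changes constants, by monotonicity-type comparisons between $\|(I-\dot A_{h})^rf\|_X$ and $\|(I-\dot A_{ch})^rf\|_X$, which follow from Lemma~\ref{lemS} applied to appropriate $L_1$-bounded kernels relating the two averages.
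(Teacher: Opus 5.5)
\textbf{There is a genuine gap in Step~3.} Your Steps~1--2 are fine, and so is the Taylor/Favard bound $\|g-J_m*g\|_X\le Cm^{-r}\|g^{(r)}\|_X$ for $g\in X^r$. The failing part is the first half of Step~3, and it is where the entire difficulty lies.

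The claim that $g=\dot A_{h,r}f=r\dot A_hf-\binom{r}{2}\dot A_h^2f+\dots$ lies in $X^r$ is false for $r\ge 2$. By your own observation that $\dot A_h^kf$ is smooth of order $k$, the sum is at best smooth of order $1$. The operators ``$\dot A_h^{k-r}$'' with $k<r$ in your formula are negative powers of $\dot A_h$, i.e.\ derivatives of $f$. For $f=\chi_{[0,1]}$ and $r=2$, the derivative of $2\dot A_hf-\dot A_h^2f$ has jumps. Equivalently, the multiplier $1-\bigl(1-\frac{e^{ih\xi}-1}{ih\xi}\bigr)^r$ of $\dot A_{h,r}$ decays only like $|\xi|^{-1}$.

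Even for $r=1$ the proposed bound $h\|g'\|_X\le C\|(I-\dot A_h)f\|_X$ fails. The reason is that $h(\dot A_hf)'=f(\cdot+h)-f$ is a difference of translates, and translations are unbounded in exactly the non-translation-invariant lattices this lemma is meant for. Take $X=L_p(\mathbb{T},|x|^\alpha)$ with $0<\alpha<p-1$; this is an $A_p$ weight, so Assumption~A holds. The function $f(x)=|x|^{-1/p}\chi_{[-1,1]}(x)$ belongs to $X$. However, $f(\cdot+h)\notin X$, because its singularity moves to $x=-h$, where the weight is $\approx h^\alpha>0$. Thus $h\|(\dot A_hf)'\|_X=\infty$ while $\|(I-\dot A_h)f\|_X<\infty$. ``Factoring $\Delta_h$ through $I-\dot A_h$'' always leaves a translation in the cofactor, and Assumption~A gives no control over it.

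\textbf{What is missing.} The paper does not prove this lemma itself; it quotes it from [V23]. The idea you need is a direct comparison of Fourier multipliers, with Lemma~\ref{lemS} as the only boundedness tool; no intermediate smooth function is needed.
\begin{itemize}
\item Take $h=\gamma/n$, $m=n/2$ (for even $n$, say), and the de la Vall\'ee Poussin means $f*V_m^\eta\in\mathcal{T}_n$. Then $f-f*V_m^\eta$ is obtained from $(I-\dot A_h)^rf$ by the multiplier $\lambda(hk)$, where $\lambda(u)=(1-\eta(2u/\gamma))\,\theta(u)^{-r}$, $\theta=1-\mu$ and $\mu(u)=\frac{e^{iu}-1}{iu}$.
\item $\theta$ vanishes only at $u=0$, since $|1+iu|>1$ for $u\ne0$, and the cut-off vanishes there. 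Also $\theta(u)\to1$ as $|u|\to\infty$.
\item Write $\theta^{-r}=\sum_{j<N}\binom{r-1+j}{j}\mu^j+R_N$ with $N\ge2$. The powers $\mu^j$ are Fourier transforms of compactly supported B-splines. Each $\eta(2\cdot/\gamma)\mu^j$ has a Schwartz kernel. The remainder $(1-\eta(2u/\gamma))R_N(u)$ is $C^\infty$ with all derivatives $O(|u|^{-N})$.
\item Hence $\lambda-1$ is the Fourier transform of a kernel in $\mathcal{R}^*$. Since $\|K^*\|_{L_1}$ is dilation invariant, after periodization Lemma~\ref{lemS} gives $\|f-f*V_m^\eta\|_X\le C\|(I-\dot A_h)^rf\|_X$, which is the lemma.
\end{itemize}
Your approximant $P_nf$ can be treated the same way once $J_m$ has the vanishing moments you mention.

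With $h=\gamma/n$ fixed from the start, your Step~4 rescaling is unnecessary. As stated it is also too loose: the ratio of the multipliers of $(I-\dot A_h)^r$ and $(I-\dot A_{ch})^r$ tends to $1$ at infinity, so it is not given by an integrable kernel alone.
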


The corresponding inverse estimate is established in a more general setting. For any $n \in \mathbb{N}$ (or $n > 0$), we define the error of best approximation with respect to the subspace $\Phi_n$ as
$$
E(f,\Phi_n)_X := \inf_{\varphi \in \Phi_n} \|f - \varphi\|_X.
$$

\begin{lemma}\label{leminvk} (See~\cite[inequality (3.23)]{K26}.)
Let $X$ be a Banach lattice satisfying Assumption~A, and let $s \in \mathbb{N}$.

\begin{equation*}
  \|(I - \dot{A}_{\gamma/n})^s f\|_{X} \le C n^{-s} \sum_{\nu=0}^{n} (\nu+1)^{s-1} E(f,\Phi_\nu)_X,
\end{equation*}
where the constant $C$ is independent of $f$ and $n$.
\end{lemma}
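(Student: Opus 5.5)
The plan is the standard Bernstein-type telescoping argument; the only subtlety is that $X$ is not translation-invariant, so every estimate has to run through Steklov averages and Assumption~A rather than through translations of $f$.

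\textbf{Step 1 (dyadic decomposition).} Fix $n$ and choose $m$ with $2^m\le n<2^{m+1}$. For each $k$ pick $\varphi_{2^k}\in\Phi_{2^k}$ with $\|f-\varphi_{2^k}\|_X\le 2E(f,\Phi_{2^k})_X$, and set $\varphi_{2^{-1}}:=0\in\Phi_0$. Write
\[
\varphi_{2^m}=\sum_{k=0}^{m}\bigl(\varphi_{2^k}-\varphi_{2^{k-1}}\bigr).
\]
Each summand lies in $\Phi_{2^k}$ by nestedness, and
\[
\|\varphi_{2^k}-\varphi_{2^{k-1}}\|_X\le 4E(f,\Phi_{2^{k-1}})_X,
\]
where for $k=0$ the right-hand side is read as $2E(f,\Phi_0)_X=2\|f\|_X$.

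\textbf{Step 2 (split the modulus).} Write
\[
\|(I-\dot A_{\g/n})^s f\|_X\le \|(I-\dot A_{\g/n})^s(f-\varphi_{2^m})\|_X+\sum_{k=0}^m\|(I-\dot A_{\g/n})^s(\varphi_{2^k}-\varphi_{2^{k-1}})\|_X .
\]
By Lemma~\ref{lemm1}(i) the first term is at most $CE(f,\Phi_{2^m})_X$.

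\textbf{Step 3 (smooth pieces).} For a function $\psi\in\Phi_{2^k}$, apply Lemma~\ref{lemm1}(iii), second line, with $r=s$ (so $r-s=0$), and then the Bernstein inequality:
\[
\|(I-\dot A_{\g/n})^s\psi\|_X\le C(\g/n)^s\|\psi^{(s)}\|_X\le C n^{-s}(B2^k)^s\|\psi\|_X .
\]
This is the step that needs care: Lemma~\ref{lemm1}(iii) requires $\psi\in X^s$. That holds because the Bernstein inequality, stated for elements of $\Phi_n$, implicitly gives that their derivatives lie in $X$. Hence the sum in Step 2 is bounded by
\[
Cn^{-s}\sum_{k=0}^m 2^{ks}E(f,\Phi_{2^{k-1}})_X .
\]
The first term is also of this form, since $E(f,\Phi_{2^m})_X=n^{-s}n^sE(f,\Phi_{2^m})_X\le C n^{-s}2^{ms}E(f,\Phi_{2^{m}})_X$.

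\textbf{Step 4 (dyadic sum to full sum).} Use monotonicity of $E(f,\Phi_\nu)_X$ in $\nu$, which follows from nestedness. For $k\ge2$,
\[
2^{ks}E(f,\Phi_{2^{k-1}})_X\le C\sum_{\nu=2^{k-2}}^{2^{k-1}}(\nu+1)^{s-1}E(f,\Phi_\nu)_X ,
\]
since the block contains about $2^{k-2}$ terms, each with $(\nu+1)^{s-1}\asymp 2^{k(s-1)}$ and $E(f,\Phi_\nu)_X\ge E(f,\Phi_{2^{k-1}})_X$. The terms $k=0,1$ are bounded by $C\|f\|_X=CE(f,\Phi_0)_X$, which is the $\nu=0$ term of the sum. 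The extra term $2^{ms}E(f,\Phi_{2^m})_X$ is handled the same way using the block $\nu\in[2^{m-1},2^m]\subset[0,n]$.

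Summing the blocks gives the claimed bound $Cn^{-s}\sum_{\nu=0}^n(\nu+1)^{s-1}E(f,\Phi_\nu)_X$. If $n$ ranges over all reals $n\ge1$ rather than integers, the same argument works with $m=\lfloor\log_2 n\rfloor$. For $0<n<1$ the sum reduces to its $\nu=0$ term, so we need $\|(I-\dot A_{\g/n})^sf\|_X\le Cn^{-s}\|f\|_X$; this holds because Lemma~\ref{lemm1}(i) gives $\le C\|f\|_X$ and $n^{-s}>1$.
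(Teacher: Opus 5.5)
Your argument is correct and follows the standard route. The paper gives no proof of its own and imports the inequality from its earlier work; the argument behind it is exactly this Stechkin-type dyadic telescoping: near-best approximants from $\Phi_{2^k}$, Lemma~\ref{lemm1}(i) for the tail, Lemma~\ref{lemm1}(iii) together with the Bernstein inequality for the blocks, and monotonicity of $E(f,\Phi_\nu)_X$ to pass to the full sum. Only two cosmetic points remain: for $k=0$ you actually get $\|\varphi_1\|_X\le 3\|f\|_X$ rather than $2\|f\|_X$, and if $E(f,\Phi_{2^k})_X=0$ is not attained you should choose $\|f-\varphi_{2^k}\|_X\le 2E(f,\Phi_{2^k})_X+\varepsilon$ and let $\varepsilon\to 0$.
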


Analogues of the Bernstein and Nikolskii-Stechkin-Boas inequalities are given in the following lemma.

\begin{lemma}\label{lemm3}
Let $X$ be a Banach lattice satisfying Assumption~A.
For $T\in \mathcal{T}_n$ (or $T\in \mathcal{B}_X^n$ in the case $\Omega=\R$), $\g>0$, and $r\in \N$, the following properties hold:
\begin{enumerate}

   \item[(i)]  $\|T^{(r)}\|_X\le C n^r \|T\|_X$;

  \item[(ii)]  if $f\in X$ and $T$ satisfies $\|f-T\|_X\le 2E_n(f)_X$. Then
   $$
   \|T^{(r)}\|_X\le C n^r \|(I-\dot A_{{\g}/{n}})^r f\|_{X}.
   $$
\end{enumerate}
In these inequalities, the constant  $C$ is independent of $f$ and $n$.
\end{lemma}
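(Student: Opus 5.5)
We prove both parts by writing the relevant operator on $\mathcal{T}_n$ (or $\mathcal{B}_X^n$) as a convolution with a kernel whose symmetrically decreasing majorant has controlled $L_1$-norm. Lemma~\ref{lemS} then gives the bound in $X$ without any translation invariance.

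\emph{Part (i).} First take $\Omega=\R$. Fix a Schwartz function $\eta$ with $\widehat\eta=1$ on $[-1,1]$ and $\supp\widehat\eta\subset[-2,2]$. For $T\in\mathcal{B}_X^n$ we have $\widehat T=\widehat T\,\widehat\eta(\cdot/n)$ in $\mathcal{S}'$, hence
$$T^{(r)}=T*K_{n,r},\qquad K_{n,r}(x)=n^{r+1}\eta^{(r)}(nx).$$
Since $\eta^{(r)}$ is Schwartz, it has the majorant $C_r(1+|x|)^{-2}$. Consequently $K_{n,r}^*(x)=C_r n^{r+1}(1+n|x|)^{-2}$ is a majorant with $\|K_{n,r}^*\|_{L_1}\le Cn^r$, and Lemma~\ref{lemS} gives $\|T^{(r)}\|_X\le Cn^r\|T\|_X$. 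On $\T$ we use instead the $r$-th derivative of the de la Vall\'ee Poussin kernel $V_n$, whose Fourier coefficients equal $1$ on $[-n,n]$. Equivalently, we take the periodization of $K_{n,r}$. It satisfies $|V_n^{(r)}(x)|\le Cn^{r+1}(1+n|x|)^{-2}$ on $[-\pi,\pi)$, and this bound is itself symmetrically decreasing with $L_1$-norm $O(n^r)$.

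\emph{Part (ii).} Put $h=\g/n$. The operator $\dot A_h$ is the Fourier multiplier $m(h\xi)$ with $m(t)=(e^{it}-1)/(it)$. For real $t$ we have $|m(t)|<1$ when $t\ne0$, and $1-m(t)=-\tfrac{it}{2}+O(t^2)$ near $0$. Hence the function
$$\rho(t)=\frac{(it)^r}{(1-m(t))^r}$$
is smooth on $\R$, the singularity at $t=0$ being removable. Let $\psi$ be a smooth cutoff equal to $1$ on $[-1,1]$ and supported in $[-2,2]$. For $T\in\mathcal{T}_n$ (or $\mathcal{B}_X^n$) we then get
$$T^{(r)}=h^{-r}\,\big((I-\dot A_h)^rT\big)*L_n,\qquad \widehat{L_n}(\xi)=\psi(\xi/n)\,\rho(\gamma\xi/n).$$
The function $\psi(\cdot)\rho(\gamma\,\cdot)$ is $C^\infty$ with compact support, so $L_n=n\Lambda(n\,\cdot)$ with $\Lambda$ a Schwartz function. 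As in part (i), $L_n$ (periodized on $\T$) has a symmetrically decreasing majorant of bounded $L_1$-norm. By Lemma~\ref{lemS},
$$\|T^{(r)}\|_X\le Cn^r\|(I-\dot A_h)^rT\|_X.$$
Now write $(I-\dot A_h)^rT=(I-\dot A_h)^rf-(I-\dot A_h)^r(f-T)$. Lemma~\ref{lemm1}(i) bounds the second term, giving
$$\|(I-\dot A_h)^rT\|_X\le\|(I-\dot A_h)^rf\|_X+2C\,E_n(f)_X.$$
Finally Lemma~\ref{lemm2} gives $E_n(f)_X\le C\|(I-\dot A_{\g/n})^rf\|_X$, which completes the estimate.

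\emph{Main obstacle.} The key point is that $1-m$ does not vanish on $\supp\psi(\cdot/n)$ after rescaling, apart from the removable zero at the origin. This is why $\rho$ is smooth and why the kernel $L_n$ has an integrable symmetrically decreasing majorant uniformly in $n$. Beyond this, two technical points remain. On $\T$ the periodized kernel must admit a genuinely symmetrically decreasing majorant on $[-\pi,\pi)$; the bound $Cn(1+n|x|)^{-2}$ serves this purpose. On $\R$ the convolution identities must be justified for elements of $X\cap\mathcal{S}'$, which holds because $L_n$ and $K_{n,r}$ are Schwartz functions.
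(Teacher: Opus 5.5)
Your proof is correct. The paper does not prove this lemma itself; it imports it, together with Lemmas~\ref{lemm1} and~\ref{lemm2}, from the reference V23. So there is no in-paper argument to compare against, and your route is the natural one with the tools the paper assembles:
\begin{itemize}
\item Lemma~\ref{lemS}, applied to dilated Schwartz multiplier kernels, first gives the Bernstein inequality.
\item The same lemma then gives the Nikolskii--Stechkin--Boas-type bound $\|T^{(r)}\|_X\le Cn^r\|(I-\dot A_{\gamma/n})^rT\|_X$. On $\mathbb{T}$ the kernels are periodized, and the majorant $Cn(1+n|x|)^{-2}$ survives periodization.
\item Lemma~\ref{lemm1}(i) and the Jackson estimate of Lemma~\ref{lemm2} then transfer this bound from $T$ to $f$.
\end{itemize}
Your observation that $1-m(t)$, with $m(t)=(e^{it}-1)/(it)$, vanishes on $\mathbb{R}$ only at $t=0$, and only simply, is exactly what makes the argument work for every $\gamma>0$.

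Only cosmetic points remain.
\begin{itemize}
\item With the paper's normalization $\widehat f=\frac1{2\pi}\int f e^{-i\xi x}$ and Lebesgue measure on $\mathbb{R}$, one has $\widehat{f*g}=2\pi\widehat f\,\widehat g$. Your kernels on $\mathbb{R}$ therefore need a factor $1/(2\pi)$.
\item The trapezoidal de la Vall\'ee Poussin kernel is not literally the periodization of your $K_{n,r}$, so ``equivalently'' is loose. Both kernels do satisfy the majorant you state.
\item Your cutoffs equal $1$ only on the closed interval containing $\supp\widehat T$. The identity $\widehat\eta(\cdot/n)\widehat T=\widehat T$ therefore uses that $\widehat\eta-1$ is flat at $\pm1$ and that the compactly supported distribution $\widehat T$ has finite order. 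Alternatively, take the cutoff equal to $1$ on $[-2,2]$.
\item Identifying the distributional convolution with the Lebesgue integral used in Lemma~\ref{lemS} relies on $T$ having polynomial growth. This holds by Paley--Wiener--Schwartz, since $\widehat T$ has compact support.
\end{itemize}
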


\section{Main results in the periodic case}\label{main}

In this section, we set $X=X(\T)$ and assume that $\vp_{k,n} \in \Phi_n$, $k=1,\dots, m_n$, are $2\pi$-periodic and $\mathcal{X}_n=(x_{k,n})_{k=1}^{m_n}\subset \T$ is a set of points for which there exist positive constants $\g$ and $\g'$ satisfying
\begin{equation}\label{g}
  \frac{\g}{n}\le x_{k+1,n}-x_{k,n}\le \frac{\g'}{n},\quad k=1,\dots,m_n,
\end{equation}
with the convention $x_{m_n+1,n}=2\pi+x_{1,n}$.

We consider the following sequence of linear Kantotovich-type operators with respect the the averaging operators $A_h$:
\begin{equation*}
  K_{n,r}f(x):=\sum_{k=1}^{m_n} A_{\g/n,r}f(x_{k,n})\vp_{k,n}(x),\quad n\in \N.
\end{equation*}
For convenience, we set ${K}_{0,r}:=0$.

To formulate the main results, we impose the following assumptions on
$K_{n,r}$, involving a given parameter $s\in \N$:
\begin{equation}\label{A1}\tag{$A_1$}
  \|K_{n,r}f\|_X\le C_1\|f\|_{X},\quad f\in X,\quad n\in \N,
\end{equation}

\begin{equation}\label{A2}\tag{$A_2$}
\|T-K_{n,r}T\|_X\le C_2n^{-s}\|T^{(s)}\|_X, \quad T\in \mathcal{T}_n,\quad n\in \N.
\end{equation}

\smallskip

\noindent Here $C_1>0$ depends only on $r$ and $X$;  $C_2>0$ depends only on $s$, $r$, and $X$. 

\smallskip

For specific conditions on $K_{n,r}$ that ensure the validity of~\eqref{A1} and~\eqref{A2}, see Propositions~\ref{prex1} and~\ref{prex2} below.


\subsection{Direct and inverse estimates}\label{secdirinv}

\begin{theorem}\label{thmain}
Let $X$ be a Banach lattice satisfying Assumption~A, and let $r, s \in \N$.
Suppose that $K_{n,r}$ satisfies assumptions~\eqref{A1} and~\eqref{A2}. Then for all  $f\in X$, we have
\begin{equation*}
  \|f-K_{n,r} f\|_X\le C\|(I-\dot A_{\g/n})^s f\|_{X},
\end{equation*}
 where the constant $C$ is independent of $f$ and $n$.
\end{theorem}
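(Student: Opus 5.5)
The plan is the standard Lebesgue-type argument: compare $f$ with a near-best approximant $T\in\mathcal{T}_n$ and control everything by the Steklov modulus. Fix $f\in X$ and choose $T\in\mathcal{T}_n$ with $\|f-T\|_X\le 2E_n(f)_X$. Since $K_{n,r}$ is linear, we split
\begin{equation*}
  f-K_{n,r}f=(f-T)+(T-K_{n,r}T)+K_{n,r}(T-f),
\end{equation*}
so that
\begin{equation*}
  \|f-K_{n,r}f\|_X\le \|f-T\|_X+\|T-K_{n,r}T\|_X+\|K_{n,r}(f-T)\|_X .
\end{equation*}

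The first and third terms are handled at once. By \eqref{A1}, they are bounded by $(1+C_1)\|f-T\|_X\le 2(1+C_1)E_n(f)_X$. By Lemma~\ref{lemm2} with $r$ replaced by $s$ and the same $\g$, we get $E_n(f)_X\le C\|(I-\dot A_{\g/n})^s f\|_X$.

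For the middle term, \eqref{A2} gives $\|T-K_{n,r}T\|_X\le C_2 n^{-s}\|T^{(s)}\|_X$. Since $T$ is a near-best approximant with constant $2$, Lemma~\ref{lemm3}(ii) with $r=s$ yields
\begin{equation*}
  \|T^{(s)}\|_X\le C n^s \|(I-\dot A_{\g/n})^s f\|_X .
\end{equation*}
The powers $n^{-s}$ and $n^s$ cancel, so this term is also bounded by $C\|(I-\dot A_{\g/n})^s f\|_X$. Summing the three bounds proves the theorem, with a constant independent of $f$ and $n$.

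The only delicate point is that the step $\g/n$ in the modulus must match the step in Lemmas~\ref{lemm2} and~\ref{lemm3}(ii). Both lemmas are stated for an arbitrary $\g>0$, so this is automatic; the constants then depend on $\g$, which is a fixed parameter of the point sets.

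One should also check that a near-best $T$ exists. If $E_n(f)_X>0$, the factor $2$ gives room. If $E_n(f)_X=0$, then $\mathcal{T}_n$ is finite dimensional, so the infimum is attained, $f\in\mathcal{T}_n$, and we may take $T=f$.

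I expect no real obstacle. The whole substance sits in assumptions \eqref{A1} and \eqref{A2} and in the cited lemmas from~\cite{V23}.
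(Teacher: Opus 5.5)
Your proposal is correct and matches the paper's proof: you take a near-best $T\in\mathcal{T}_n$, use the same three-term splitting, bound two of the terms with \eqref{A1} and \eqref{A2}, and finish with Lemmas~\ref{lemm2} and~\ref{lemm3}(ii) at order $s$. Your remark on why a near-best $T$ exists is a correct detail that the paper leaves implicit.
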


\begin{proof}
  Let $T\in \mathcal{T}_n$ be such that $\|f-T\|_X\le 2E_n(f)_X$. Applying assumptions~\eqref{A1} and~\eqref{A2} as well as Lemmas~\ref{lemm2} and~\ref{lemm3}(ii), we obtain
   \begin{equation*}
      \begin{split}
         \|f-K_{n,r}f\|_X&\le \|f-T\|_X+\|T-K_{n,r}T\|_X+\|K_{n,r}(f-T)\|_X\\
         &\le C(\|f-T\|_X+n^{-s}\|T^{(s)}\|_X)\\
         &\le C\|(I-\dot A_{\g/n})^sf\|_X.
      \end{split}
   \end{equation*}
\end{proof}

\begin{theorem}\label{thinvk}
Let $X$ be a Banach lattice satisfying Assumption~A, and let $r,s \in \N$.
For all $f\in X$, we have
\begin{equation*}
\begin{split}
  \|(I-\dot A_{\g/n})^s f\|_{X}\le  \frac{C}{n^s}\sum_{k=0}^{n} (\nu+1)^{s-1}\|f-K_{\nu,r} f\|_X,
\end{split}
\end{equation*}
where the constant $C$ is independent of $f$ and $n$.
\end{theorem}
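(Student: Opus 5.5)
This is a direct application of Lemma~\ref{leminvk} with $\Phi_\nu=\mathcal{T}_\nu$. The only thing to arrange is that the output $K_{\nu,r}f$ is itself an element of an admissible approximating subspace.

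First we set up the subspaces. Take $\Phi_\nu := \mathcal{T}_\nu$ for $\nu\in\N$ and $\Phi_0=\{0\}$. These are nested, lie in $X$ by property 3) of Banach lattices on $\T$, and satisfy the Bernstein inequality by Lemma~\ref{lemm3}(i). If the functions $\vp_{k,\nu}$ do not lie in $\mathcal{T}_\nu$ but in some larger abstract space, we instead take $\Phi_\nu$ to be the nested family generated by the ranges of $K_{\mu,r}$ for $\mu\le\nu$. This family inherits the Bernstein inequality from the standing hypothesis on $\Phi_n$, since $\vp_{k,n}\in\Phi_n$. In either case, since $K_{\nu,r}f$ is a linear combination of the $\vp_{k,\nu}\in\Phi_\nu$, we have $K_{\nu,r}f\in\Phi_\nu$.

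Next we compare best approximation with the operator error. Because $K_{\nu,r}f\in\Phi_\nu$,
\[
E(f,\Phi_\nu)_X\le \|f-K_{\nu,r}f\|_X,\qquad \nu\ge 1.
\]
For $\nu=0$, both sides equal $\|f\|_X$ by the conventions $\Phi_0=\{0\}$ and $K_{0,r}=0$, so the inequality holds for every $\nu\ge0$.

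Finally, we apply Lemma~\ref{leminvk} with the given $s$ and $\gamma$:
\[
\|(I-\dot A_{\g/n})^s f\|_X\le C n^{-s}\sum_{\nu=0}^n(\nu+1)^{s-1}E(f,\Phi_\nu)_X .
\]
Substituting the bound from the previous step gives the claim. The summation index in the statement should read $\nu$.

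No real obstacle is expected, because neither \eqref{A1} nor \eqref{A2} is needed. The only point requiring care is the first step: we must confirm that the family $\Phi_\nu$ containing the ranges of $K_{\nu,r}$ is nested and satisfies the Bernstein inequality, which is exactly what Lemma~\ref{leminvk} requires. The standing hypothesis $\vp_{k,n}\in\Phi_n$ with $\Phi_n$ nested and Bernstein settles this directly.
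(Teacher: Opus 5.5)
Your proof is correct and is exactly the paper's argument: Lemma~\ref{leminvk} combined with $K_{\nu,r}f\in\Phi_\nu$, which gives $E(f,\Phi_\nu)_X\le\|f-K_{\nu,r}f\|_X$ for every $0\le\nu\le n$. The conventions $\Phi_0=\{0\}$ and $K_{0,r}=0$ cover $\nu=0$, and the detour through $\mathcal{T}_\nu$ or an auxiliary span is unnecessary, since the standing hypothesis $\vp_{k,\nu}\in\Phi_\nu$ already lets you apply the lemma to the given family $(\Phi_\nu)$ directly.
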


\begin{proof}
  The proof follows directly from Lemma~\ref{leminvk} and the fact that $K_{\nu,r} f\in \Phi_\nu$ for each  $\nu=0,\dots,n$.
\end{proof}

The application of Theorems~\ref{thmain} and~\ref{thinvk} yields the following corollary.

\begin{corollary}\label{cor2}
  Under the assumptions of Theorem~\ref{thmain}, the following properties are equivalent for all  $\a \in (0,s)$:
  \begin{itemize}
    \item[(i)]  $\|f-K_{n,r} f\|_X=\mathcal{O}(n^{-\a})$,
    \item[(ii)] $\|(I-\dot A_{\g/n})^s f\|_{X}=\mathcal{O}(n^{-\a})$.
  \end{itemize}
\end{corollary}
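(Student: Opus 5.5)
The implication (ii)$\Rightarrow$(i) is immediate from Theorem~\ref{thmain}. Indeed, if $\|(I-\dot A_{\g/n})^s f\|_X\le M n^{-\a}$ for all $n\in\N$, then $\|f-K_{n,r}f\|_X\le CMn^{-\a}$ with the constant $C$ from that theorem.

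For (i)$\Rightarrow$(ii) I will use Theorem~\ref{thinvk}. Its sum runs over $\nu=0,\dots,n$; the index $k$ in the displayed formula is a typo for $\nu$. Assume $\|f-K_{\nu,r}f\|_X\le M\nu^{-\a}$ for $\nu\ge1$. Since $K_{0,r}=0$, the term $\nu=0$ equals $\|f\|_X$. Then
\begin{equation*}
\|(I-\dot A_{\g/n})^s f\|_X\le \frac{C}{n^s}\Big(\|f\|_X+M\sum_{\nu=1}^{n}(\nu+1)^{s-1}\nu^{-\a}\Big).
\end{equation*}
Because $\a<s$, the exponent $s-1-\a$ is greater than $-1$. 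Hence the sum is bounded by $C'\sum_{\nu=1}^n \nu^{s-1-\a}\le C'' n^{s-\a}$, by comparison with $\int_0^n t^{s-1-\a}\,dt$. This is the only point where the hypothesis $\a<s$ enters. For $\a=s$ we would get a logarithmic loss, which explains the restriction.

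It remains to deal with the term $\|f\|_X n^{-s}$. Since $\a<s$, we have $n^{-s}\le n^{-\a}$ for $n\ge1$, so this term is also $\mathcal{O}(n^{-\a})$. Combining the two bounds gives $\|(I-\dot A_{\g/n})^s f\|_X\le C(\|f\|_X+M)n^{-\a}$, which is (ii).

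There is no real obstacle here beyond this elementary summation estimate. One bookkeeping point concerns the parameter of the theorem. If the statement is meant for $n>0$ real rather than $n\in\N$, I would reduce to integers. For $n\ge1$ this uses the monotonicity-type comparison $\|(I-\dot A_{t})^s f\|_X\asymp\|(I-\dot A_{t'})^s f\|_X$ for $t\asymp t'$. That comparison is available from the Steklov structure, but I expect to need it only if the index set is non-integer. For $n\in\N$ it is unnecessary.
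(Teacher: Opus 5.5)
Your proposal is correct and takes the same route the paper indicates. The implication (ii)$\Rightarrow$(i) follows from Theorem~\ref{thmain}, and (i)$\Rightarrow$(ii) follows from Theorem~\ref{thinvk} together with the elementary bound $\sum_{\nu=1}^n(\nu+1)^{s-1}\nu^{-\a}\le Cn^{s-\a}$ for $\a<s$, with the $\nu=0$ term $n^{-s}\|f\|_X$ absorbed because $n^{-s}\le n^{-\a}$. The paper leaves these details implicit, and your treatment of them is sound, including reading the summation index in Theorem~\ref{thinvk} as $\nu$.
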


The operators $K_{n,r}$ can be expressed in terms of the general sampling operators
\begin{equation*}
  G_{n}f(x):=\sum_{k=1}^{m_n} f(x_{k,n})\vp_{k,n}(x)
\end{equation*}
as follows:
$$
K_{n,r}f= G_{n}(A_{\g/n,r}f).
$$

Assuming that the function $f$ is continuous on $\T$, we introduce a discrete seminorm
$$
\|f\|_{X_n}:=\bigg\|\sum_{k=1}^{m_n}|f(x_{k,n})|\chi_{[x_{k,n},x_{k+1,n})}\bigg\|_{X}
$$
and impose the following two conditions on $G_n$ (which serve as counterparts
to~\eqref{A1} and~\eqref{A2}):
\begin{equation}\label{a1}\tag{$B_1$}
  \|G_nf\|_X\le C_1\|f\|_{X_n},\quad f\in C(\T),\quad n\in \N,
\end{equation}

\begin{equation}\label{a3}\tag{$B_2$}
\|T-G_nT\|_X\le C_2n^{-s}\|T^{(s)}\|_X, \quad T\in \mathcal{T}_n,\quad n\in \N.
\end{equation}

\smallskip

\noindent Here $C_1>0$ depends only on $X$ and $C_2>0$ depends only on $s$ and $X$.

\smallskip

\begin{remark}\label{remGn}
We note that if $G_n$ satisfies~\eqref{a1} and~\eqref{a3}, then the conclusions of Theorems~\ref{thmain} and~\ref{thinvk} remain valid for $K_{n,r}f= G_{n}(A_{\g/n,r}f)$.
Indeed, by~\eqref{a1} and Lemma~3.1 from~\cite{K26}, we have
\begin{equation*}
  \|K_{n,r}f\|_X=\|G_{n}A_{\g/n,r}f\|_X\le C \|A_{\g/n,r}f\|_{X_n}\le C\|f\|_{X}.
\end{equation*}
At the same time, by~\eqref{a3}, Lemma~\ref{lemm1}(ii), (i), and~(iii), we get
\begin{equation*}
\begin{split}
  \|T-K_{n,r}T\|_X&=\|T-G_n A_{\g/n,r}T\|_X\\
  &\le \|T-A_{\g/n,r}T\|_X+\|A_{\g/n,r}T-G_n A_{\g/n,r}T\|_X\\
  &\le C\(\|(I-\dot A_{\g/n})^{2r} T\|_X+n^{-s}\|(A_{\g/n,r}T)^{(s)}\|_X\)\\
  &\le C\(\|(I-\dot A_{\g/n})^{s} T\|_X+n^{-s}\|A_{\g/n,r}(T^{(s)})\|_X\)\\
  &\le Cn^{-s}\|T^{(s)}\|_X.
\end{split}
\end{equation*}
\end{remark}

\subsection{Strong converse inequalities}\label{secs}
For an approximation method $\mathcal{M}_{n}$, a strong converse inequality of the form
\begin{equation*}
w(f,1/n)_X \le C \|f - \mathcal{M}_n f\|_X,
\end{equation*}
where $w(f,1/n)_X$ is an appropriate measure of smoothness,
is widely used to characterize the rate of convergence (see, e.g.,~\cite{DI93, RS08, TB}).
Together with the corresponding direct estimate, this converse inequality yields the equivalence
\begin{equation*}
\|f - \mathcal{M}_n f\|_X \asymp w(f,1/n)_X.
\end{equation*}
Such equivalence relations have long been known for various convolution-type
operators~\cite{DI93, K12, RS08, T68, T80}, as well as for approximation methods
of a different nature, including Kantorovich-type operators and more general quasi-projection operators (see, e.g.,~\cite{AD24, D25, DI93, KP21, KS21, KS23}).
Most of the existing results concerning Kantorovich-type operators have been
established in classical $L_p$ spaces or, more recently, in variable exponent
spaces (see, e.g.,~\cite{D25}). In this section, we show how to obtain strong converse inequalities for these operators within the framework of general Banach lattices.

For our purposes,  we need to introduce one additional assumption on $K_{n,r}$, involving a fixed parameter $s\in \mathbb{N}$:

{ 
\begin{equation}\label{A3}\tag{$A_3$}
  C_3n^{-s}\|\vp^{(s)}\|_X \le \|\vp-K_{n,r} \vp\|_X, \quad \vp\in \Phi_n,\quad n\in\N,
\end{equation}

\smallskip

\noindent where  $C_3>0$ depends only on $s$ and  $X$.}

\medskip

For specific conditions on $K_{n,r}$ that ensure the validity of~\eqref{A3}, see Proposition~\ref{prex2pr2} below.

\begin{theorem}\label{thC}
 Let $X$ be a Banach lattice satisfying Assumption~A, and $r, s \in \N$. Assume that $K_{n,r}$ satisfies \eqref{A1}, \eqref{A2}, and \eqref{A3}. Then for all $f\in X$, we have
  \begin{equation}\label{c0}
    \|f-K_{n,r} f\|_{X}\asymp\|(I-\dot A_{\g/n})^s f\|_{X},
  \end{equation}
where $\asymp$ denotes a two-sided inequality with positive constants independent of $f$ and $n$.
\end{theorem}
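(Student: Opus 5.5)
The upper bound $\|f-K_{n,r}f\|_X\le C\|(I-\dot A_{\g/n})^s f\|_X$ is already Theorem~\ref{thmain}, so only the strong converse inequality
\[
\|(I-\dot A_{\g/n})^s f\|_X\le C\|f-K_{n,r}f\|_X
\]
needs to be proved. The plan is to pass through the element $\vp:=K_{n,r}f\in\Phi_n$ and split
\[
\|(I-\dot A_{\g/n})^s f\|_X\le \|(I-\dot A_{\g/n})^s (f-\vp)\|_X+\|(I-\dot A_{\g/n})^s \vp\|_X .
\]
By Lemma~\ref{lemm1}(i), the first term is at most $C\|f-K_{n,r}f\|_X$. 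For the second term, Lemma~\ref{lemm1}(iii) with $r=s$, together with the uniform boundedness of $\dot A_h$ (Assumption~A), gives $\|(I-\dot A_{\g/n})^s\vp\|_X\le Cn^{-s}\|\vp^{(s)}\|_X$. It therefore suffices to show
\[
n^{-s}\|(K_{n,r}f)^{(s)}\|_X\le C\|f-K_{n,r}f\|_X .
\]

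To bound $n^{-s}\|\vp^{(s)}\|_X$, I apply \eqref{A3} to $\vp=K_{n,r}f\in\Phi_n$:
\[
C_3n^{-s}\|\vp^{(s)}\|_X\le \|\vp-K_{n,r}\vp\|_X=\|K_{n,r}(f-K_{n,r}f)\|_X\le C_1\|f-K_{n,r}f\|_X ,
\]
where the identity uses linearity of $K_{n,r}$ and the last step is \eqref{A1}. This closes the argument.

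The only delicate point is verifying that $\vp-K_{n,r}\vp=K_{n,r}(f-K_{n,r}f)$. Written out, this is $K_{n,r}f-K_{n,r}^2f=K_{n,r}f-K_{n,r}K_{n,r}f$, which is just linearity, so no projection property is needed.

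I also have to check that Lemma~\ref{lemm1}(iii) applies to $\vp$. This requires $\vp\in X^s$, which holds because the Bernstein inequality for $\Phi_n$ guarantees $\vp^{(s)}\in X$. Beyond that, the argument is a direct chain of inequalities and I expect no real obstacle. Note that \eqref{A2} enters only through Theorem~\ref{thmain}, for the upper bound.
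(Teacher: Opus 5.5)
Your proposal is correct and follows the paper's proof essentially step for step. You split through $K_{n,r}f$ and use Lemma~\ref{lemm1}(i) and~(iii) to reduce to bounding $n^{-s}\|(K_{n,r}f)^{(s)}\|_X$. You then apply \eqref{A3} to $K_{n,r}f\in\Phi_n$ together with linearity and \eqref{A1}, and take the upper bound from Theorem~\ref{thmain}.
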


\begin{proof} By Lemma~\ref{lemm1}(i) and~(iii), we have
\begin{equation}\label{c1}
\begin{split}
   \|(I-\dot A_{\g/n})^s f\|_{X}&\le \|(I-\dot A_{\g/n})^s (f-K_{n,r}f)\|_{X}+\|(I-\dot A_{\g/n})^s K_{n,r}f\|_{X}\\
   &\le C\(\|f-K_{n,r}\|_{X}+n^{-s}\|(K_{n,r}f)^{(s)}\|_X\).
\end{split}
\end{equation}
Applying assumptions~\eqref{A3} and~\eqref{A1}, we obtain
\begin{equation}\label{c2}
  \begin{split}
    n^{-s}\|(K_{n,r}f)^{(s)}\|_X &\le C \|K_{n,r} f-K_{n,r} K_{n,r} f\|_X\\
    &=C \|K_{n,r} (f-K_{n,r} f)\|_X\le C\|f-K_{n,r} f\|_{X}.
  \end{split}
\end{equation}
Thus, combining~\eqref{c1} and~\eqref{c2}, we get the lower estimate in~\eqref{c0}.

The corresponding upper estimate follows directly from Theorem~\ref{thmain}.
\end{proof}

It is worth noting that for certain classes of Kantorovich-type operators with $r=2s$, condition~\eqref{A3} can be omitted. In this case, it is sufficient to assume that a simpler identity holds on the subspace $\Phi_{n}$, which leads to the following refinement of Theorem~\ref{thC}.

\begin{proposition}\label{propdopconv}
    Let $X, r, s$ be as in Theorem~\ref{thC} with $r=2s$. Assume that  $K_{n,r}$ satisfies \eqref{A1}, \eqref{A2}, and
    \begin{equation}\label{dopdopconv}
      K_{n,r}\vp=A_{\g/n,r}\vp\quad\text{for every } \vp\in \Phi_n.
    \end{equation}
     Then for all $f\in X$, we have
  \begin{equation}\label{c0+}
    \|f-K_{n,r} f\|_{X}\asymp\|(I-A_{\g/n})^r f\|_{X},
  \end{equation}
where $\asymp$ denotes a two-sided inequality with positive constants independent of $f$ and $n$.
\end{proposition}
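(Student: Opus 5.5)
The plan is to prove the two inequalities in \eqref{c0+} separately. By Lemma~\ref{lemm1}(ii), $\|(I-A_{\g/n})^r f\|_X$ is equivalent to $\|(I-\dot A_{\g/n})^{2r}f\|_X$. Since $\|(I-\dot A_h)^{2r}f\|_X\le C\|(I-\dot A_h)^{s}f\|_X$ by Lemma~\ref{lemm1}(i), the upper estimate alone does \emph{not} follow from Theorem~\ref{thmain}. We need the reverse direction, which holds only modulo the structure $r=2s$. So I will work directly with $(I-A_{\g/n})^r$ and $(I-A_{\g/n})^s$, and I expect the condition $r=2s$ to be used through Lemma~\ref{lemm1}(ii) in the form $\|(I-A_h)^s f\|_X\asymp\|(I-\dot A_h)^{2s}f\|_X=\|(I-\dot A_h)^{r}f\|_X$.

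\textbf{Upper estimate.} Take $T\in\mathcal T_n$ with $\|f-T\|_X\le 2E_n(f)_X$. By Lemma~\ref{lemm2} with order $2r$ and Lemma~\ref{lemm1}(ii), $E_n(f)_X\le C\|(I-\dot A_{\g/n})^{2r}f\|_X\le C\|(I-A_{\g/n})^r f\|_X$. Now split as in Theorem~\ref{thmain}:
\[
\|f-K_{n,r}f\|_X\le \|f-T\|_X+\|K_{n,r}(f-T)\|_X+\|T-K_{n,r}T\|_X .
\]
The first two terms are handled by \eqref{A1}. For the third, I will avoid \eqref{A2}, which only gives $n^{-s}\|T^{(s)}\|_X$ (too weak). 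Instead I use \eqref{dopdopconv}, assuming $\mathcal T_n\subset\Phi_n$ as in the intended setting, to get $T-K_{n,r}T=(I-A_{\g/n})^rT$. Then
\[
\|(I-A_{\g/n})^rT\|_X\le C\big(\|(I-A_{\g/n})^r f\|_X+\|f-T\|_X\big)
\]
by Lemma~\ref{lemm1}(i). If $\mathcal T_n\not\subset\Phi_n$, I would instead approximate $T$ by $K_{n,r}T\in\Phi_n$. Then \eqref{dopdopconv} gives $K_{n,r}T-K_{n,r}K_{n,r}T=(I-A_{\g/n})^rK_{n,r}T$, and \eqref{A1}, \eqref{A2} give $\|T-K_{n,r}T\|_X\le C\|(I-A_{\g/n})^rK_{n,r}T\|_X+\dots$. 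I expect to resolve this case via Lemma~\ref{lemm1}(iii) and Lemma~\ref{lemm3}(ii) with order $2r$.

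\textbf{Lower estimate.} Write
\[
(I-A_{\g/n})^r f=(I-A_{\g/n})^r(f-K_{n,r}f)+(I-A_{\g/n})^rK_{n,r}f .
\]
The first term is bounded by $C\|f-K_{n,r}f\|_X$ via Lemma~\ref{lemm1}(i). For the second, since $K_{n,r}f\in\Phi_n$, identity \eqref{dopdopconv} gives
\[
(I-A_{\g/n})^rK_{n,r}f=K_{n,r}f-K_{n,r}K_{n,r}f=K_{n,r}(K_{n,r}f-f)+\big(K_{n,r}f-K_{n,r}K_{n,r}f\big)-K_{n,r}(K_{n,r}f-f).
\]
Simply, $K_{n,r}f-K_{n,r}(K_{n,r}f)=K_{n,r}(f-K_{n,r}f)$, whose norm is at most $C_1\|f-K_{n,r}f\|_X$ by \eqref{A1}. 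This replaces \eqref{A3} entirely and is the key point of the argument.

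\textbf{Main obstacle.} The lower estimate is essentially immediate. The main difficulty is the upper estimate, specifically controlling $T-K_{n,r}T$ by $\|(I-A_{\g/n})^rf\|_X$ rather than by the $s$-th order quantity coming from \eqref{A2}. There I will rely on \eqref{dopdopconv} applied to elements of $\Phi_n$ (trigonometric polynomials, or $K_{n,r}T$), combined with the Bernstein-type estimate of Lemma~\ref{lemm3}(ii) at order $2r$ and Lemma~\ref{lemm1}(ii),(iii).
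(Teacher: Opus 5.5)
\textbf{Lower estimate: correct, same as the paper.} The paper takes a near-best $\varphi\in\Phi_n$ and uses $E(f,\Phi_n)_X\le\|f-K_{n,r}f\|_X$. Choosing $\varphi=K_{n,r}f$ instead gives exactly your identity $(I-A_{\gamma/n})^rK_{n,r}f=K_{n,r}(f-K_{n,r}f)$. Your displayed add-and-subtract line is vacuous, but the line after ``Simply'' does the work.

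\textbf{Upper estimate: genuine gap.} Your main branch assumes $\mathcal T_n\subset\Phi_n$. This is not a hypothesis: the $\Phi_n$ are only nested and satisfy a Bernstein inequality. Inside that branch your argument is fine and uses neither \eqref{A2} nor any relation between $r$ and $s$. So it is a complete proof in the situation of Remark~\ref{rem1conv}(ii), but not in general.

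The fallback for $\mathcal T_n\not\subset\Phi_n$ cannot be completed as sketched, for two reasons.
\begin{itemize}
\item The identity $(I-A_{\gamma/n})^rK_{n,r}T=K_{n,r}(T-K_{n,r}T)$ bounds $(I-A_{\gamma/n})^rK_{n,r}T$ by $\|T-K_{n,r}T\|_X$. That is the wrong direction for what you need.
\item Lemma~\ref{lemm3}(ii) at order $2r$ helps only if $\|T-K_{n,r}T\|_X\le Cn^{-2r}\|T^{(2r)}\|_X$, i.e.\ \eqref{A2} at order $2r$. You have \eqref{A2} only at order $s$. With $s<2r$, \eqref{A2} and Lemma~\ref{lemm3}(ii) give only $\|(I-\dot A_{\gamma/n})^sf\|_X$. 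That quantity is not dominated by $\|(I-A_{\gamma/n})^rf\|_X\asymp\|(I-\dot A_{\gamma/n})^{2r}f\|_X$; already $f=e^{ix}$ in $L_p$ shows this.
\end{itemize}

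\textbf{The hypothesis should read $s=2r$.} The condition ``$r=2s$'' in the statement is almost certainly a misprint for $s=2r$. Three things in the paper point to this:
\begin{itemize}
\item The paper's proof of the upper bound is one line, via Theorem~\ref{thmain} and Lemma~\ref{lemm1}(ii), and that line only works when $s=2r$.
\item The application in Proposition~\ref{cor2+} has $r=1$, with \eqref{A2} verified as $n^{-2}\|T''\|_X$.
\item Remark~\ref{rem1conv}(i) needs $\|(I-A_h)^rf\|_{L_p}\asymp\omega_{2r}(f,h)_{L_p}$ to equal $\omega_s$.
\end{itemize}
With $s=2r$ the upper estimate is immediate and holds for arbitrary $\Phi_n$:
\[
\|f-K_{n,r}f\|_X\le C\|(I-\dot A_{\gamma/n})^{2r}f\|_X\le C\|(I-A_{\gamma/n})^rf\|_X .
\]
Your observation that Theorem~\ref{thmain} is insufficient under the literal $r=2s$ was correct, and it was the symptom of this misprint. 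The right move was to read the hypothesis as $s=2r$, not to add the extra assumption $\mathcal T_n\subset\Phi_n$.
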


\begin{proof}
  We have
\begin{equation}\label{dopconv1}
  \begin{split}
    \|(I-A_{\g/n})^r f\|_{X}&=\|f-A_{\g/n,r} f\|_{X}\\
    &\le \|f-K_{n,r} f\|_{X}+\|A_{\g/n,r} f-K_{n,r} f\|_{X}.
  \end{split}
\end{equation}
Let $\vp\in \Phi_n$ be such that $\|f-\vp\|_X\le 2E(f,\Phi_n)_X$. Then, by Lemma~\ref{lemm1}(i) and~\eqref{A1}, we obtain
\begin{equation}\label{dopconv2}
  \begin{split}
    \|A_{\g/n,r} f-K_{n,r} f\|_{X}&\le \|A_{\g/n,r} f- A_{\g/n,r} \vp\|_{X}+\|K_{n,r} \vp-K_{n,r} f\|_{X}\\
    &\le C\|f-\vp\|_X\le C\|f-K_{n,r} f\|_{X}.
  \end{split}
\end{equation}
Combining~\eqref{dopconv1} and~\eqref{dopconv2}, we get the lower estimate in~\eqref{c0+}.

The upper estimate follows from Theorem~\ref{thmain} and Lemma~\ref{lemm1}(ii).
\end{proof}

\begin{remark}\label{rem1conv}
{\rm (i)} In the classical case $X=L_p(\T)$, the two-sided estimates~\eqref{c0} and~\eqref{c0+} take the following form
$$
\|f-K_{n,r} f\|_{L_p(\T)}\asymp\omega_s(f,1/n)_{L_p(\T)}.
$$

{\rm (ii)} Condition~\eqref{dopdopconv} holds, for example, when $\Phi_n=\mathcal{T}_n$ and $K_{n,r}f = G_{n}(A_{\gamma/n,r}f)$, where the operator $G_n:C(\T)\to \mathcal{T}_n$ satisfies $G_n T=T$ for all $T\in \mathcal{T}_n$.
\end{remark}

\begin{remark}
Theorems~\ref{thmain}, \ref{thinvk}, and~\ref{thC}, as well as Proposition~\ref{propdopconv},
remain true for the Kantorovich-type operators
\begin{equation*}
  \dot K_{n,r}f(x) := \sum_{k=1}^{m_n} \dot A_{\gamma/n,r}f(x_{k,n})\varphi_{k,n}(x), \quad n\in \mathbb{N}.
\end{equation*}
The corresponding proofs carry over directly with only minor adjustments.
\end{remark}

\subsection{Examples}\label{secext}

In this section, we investigate certain classes of Kantorovich-type operators
and provide easily verifiable conditions that ensure the fulfillment
of assumptions~\eqref{A1}, \eqref{A2}, and~\eqref{A3}.

For a given function $\phi$, we denote
\begin{equation*}
  V_n^\phi(x):=\sum_{k\in \Z} \phi\(\frac kn\)e^{ikx}.
\end{equation*}
To illustrate  the application of the results from Sections~\ref{secdirinv} and~\ref{secs}, we consider the following trigonometric Kantorovich-type operator
$$
K_{n,r}^\vp f(x):=\frac{1}{2n+1}\sum_{k=0}^{2n}A_{\g/n,r}f(t_k)V_n^\vp(x-t_k),\quad t_k=\frac{2\pi k}{2n+1},
$$
where  $\vp$ is an even function of bounded variation with $\supp\vp\subset [-1,1]$.

To simplify the notation, we denote
$$
K_n^\varphi f := K_{n,1}^\varphi f.
$$
For our purposes, we employ the general sampling operators
$$
G_{n}^\varphi f(x) := \frac{1}{2n+1}\sum_{k=0}^{2n} f(t_k) V_n^\varphi(x-t_k)
$$
together with the identity
\begin{equation}\label{eqKG}
K_{n,r}^\varphi f(x) = G_n^\varphi (A_{\gamma/n,r}f)(x).
\end{equation}


\begin{proposition}\label{prex1}
   Let $X$ be a Banach lattice such that
   \begin{equation}\label{zv1}
     \sup_{n\in \N}\sup_{\|f\|_X\le 1}\|f*V_n^{\vp}\|_X<\infty.
   \end{equation}
Then the operators $K_{n,r}^\vp$ ($r\in \N$) and $G_n^\vp$ satisfy~\eqref{A1} and~\eqref{a1}, respectively.
In particular, \eqref{zv1} holds if $X$ satisfies Assumption A
and $\widehat{\vp}\in \mathcal{R}^*$.
\end{proposition}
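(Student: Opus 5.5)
The plan is to prove \eqref{a1} for $G_n^\vp$ by duality, and then get \eqref{A1} for $K_{n,r}^\vp$ from Remark~\ref{remGn}. That remark shows $\|K_{n,r}f\|_X=\|G_n(A_{\g/n,r}f)\|_X\le C\|A_{\g/n,r}f\|_{X_n}\le C\|f\|_X$ by \eqref{a1} and Lemma~3.1 of~\cite{K26}; here $A_{\g/n,r}f$ is continuous, so $G_n$ may be applied. Since $\vp$ is even, $V_n^\vp$ is even, and for real $\vp$ it is real. Note that $V_n^\vp$ is a trigonometric polynomial of degree at most $n$ because $\supp\vp\subset[-1,1]$.

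\textbf{Step 1 (duality).} Fix $f\in C(\T)$ and $g\in X'$ with $\|g\|_{X'}\le1$. By Fubini and evenness of the kernel,
$$\int_\T G_n^\vp f\,\overline{g}\,d\mu=\frac{1}{2n+1}\sum_{k=0}^{2n} f(t_k)\,\overline{T(t_k)},\qquad T:=\overline{\overline g * V_n^\vp}\in\mathcal T_n,$$
up to a harmless normalization constant. Moreover, \eqref{zv1} transfers to $X'$: the adjoint of $f\mapsto f*V_n^\vp$ is again convolution with $V_n^\vp$, since the kernel is even. Hence $\|T\|_{X'}\le C$.

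\textbf{Step 2 (Marcinkiewicz-type estimate).} Let $h=2\pi/(2n+1)$ and $S=\sum_k|f(t_k)|\chi_{[t_k,t_{k+1})}$, so that $\|S\|_X=\|f\|_{X_n}$. For $x\in[t_k,t_{k+1})$,
$$|T(t_k)|\le|T(x)|+\int_{t_k}^{t_{k+1}}|T'|\le |T(x)|+2h\,A_{2h}|T'|(x).$$
Multiplying by $|f(t_k)|$ and integrating over $[t_k,t_{k+1})$ gives, after summation,
$$\Big|\int G_n^\vp f\,\overline g\Big|\le C\int_\T S\,\big(|T|+h\,A_{2h}|T'|\big)\,d\mu\le C\|S\|_X\big(\|T\|_{X'}+h\|A_{2h}|T'|\|_{X'}\big).$$
Now $X'$ satisfies Assumption~A, as noted after its statement, so Lemma~\ref{lemm3}(i) in $X'$ applies. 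It gives $h\|A_{2h}|T'|\|_{X'}\le Ch\|T'\|_{X'}\le C\|T\|_{X'}$. Taking the supremum over $g$ yields $\|G_n^\vp f\|_X\le C\|f\|_{X_n}$, which is \eqref{a1}. I expect this step to be the main obstacle: it requires the Bernstein inequality and Assumption~A in the associate space. Those are available precisely because Assumption~A passes to $X'$.

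\textbf{Step 3 (sufficient condition for \eqref{zv1}).} By the Poisson summation formula, with constants depending on the Fourier normalization,
$$V_n^\vp(x)=c\sum_{j\in\Z} n\,\widehat\vp\big(n(x+2\pi j)\big).$$
Using $|\widehat\vp|\le\widehat\vp^{\,*}$, the term $j=0$ on $[-\pi,\pi)$ is majorized by $n\widehat\vp^{\,*}(nx)$. This function is symmetrically decreasing with $L_1$ norm bounded by $\|\widehat\vp^{\,*}\|_{L_1(\R)}$. For $j\neq0$ and $|x|\le\pi$, monotonicity gives
$$\sum_{j\ne0}n\,\widehat\vp^{\,*}(n\pi(2|j|-1))\le \frac{C}{\pi}\int_0^\infty\widehat\vp^{\,*}\le C,$$
a constant, which is also symmetrically decreasing. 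Thus $(V_n^\vp)^*$ can be chosen with $\|(V_n^\vp)^*\|_{L_1(\T)}\le C$ uniformly in $n$. Lemma~\ref{lemS} then gives \eqref{zv1}.
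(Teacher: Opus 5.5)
Your proof is correct. The reduction of \eqref{A1} for $K_{n,r}^\vp$ to \eqref{a1} for $G_n^\vp$ via Remark~\ref{remGn} is exactly what the paper does.

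For \eqref{a1} itself, and for the sufficiency of $\widehat{\vp}\in\mathcal{R}^*$, the paper gives no argument and simply cites \cite[Proposition~3.3]{K26}. Your Steps~1--3 are a valid self-contained substitute: duality; a Marcinkiewicz-type bound for $T=\overline{\overline{g}*V_n^\vp}\in\mathcal{T}_n$ in $X'$, using Steklov and Bernstein estimates there; and a Poisson-summation majorant with uniformly bounded $L_1(\T)$ norm. The duality route is also consistent with how the kernel condition is phrased in $X'$ in Proposition~\ref{prex1r}. The paper's citation buys brevity, while your version makes explicit exactly which structural facts are used.

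Two small points should be stated explicitly.

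\textbf{Assumption~A.} Step~2 uses Assumption~A, transferred to $X'$, but the first sentence of the statement does not list it. This is harmless: the paper's own reduction through Lemma~3.1 of \cite{K26} also needs Assumption~A to bound $\|A_{\g/n,r}f\|_{X_n}$, so it is implicitly part of the hypotheses.

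\textbf{From the pairing bound to the norm.} Passing from the bound on $\int G_n^\vp f\,\overline{g}\,d\mu$ over $\|g\|_{X'}\le 1$ to a bound on $\|G_n^\vp f\|_X$ uses $X''=X$ isometrically. This is the Lorentz--Luxemburg theorem, which follows from the Fatou property~2).
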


\begin{proof}
The statement of the proposition for the operators $G_n^\vp$ was proved in~\cite[Proposition~3.3]{K26}.
For the Kantorovich operators $K_{n,r}^\vp$, the proof follows by the same arguments as in Remark~\ref{remGn}.
\end{proof}

\begin{lemma}\label{levpn}
  Let $\vp$ be an even function of bounded variation with $\supp\vp\subset [-1,1]$ and $\widehat{\vp}\in \mathcal{R}^*$. Then
  \begin{equation*}
    V_n^\vp\in \mathcal{R}^*\quad\text{with}\quad \sup_n \|(V_n^\vp)^*\|_{L_1(\T)}<\infty.
  \end{equation*}
\end{lemma}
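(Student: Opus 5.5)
We will use the Poisson summation formula to write $V_n^\vp$ as a periodization of a dilated Fourier transform of $\vp$, and then build a symmetrically decreasing majorant from the majorant of $\widehat{\vp}$. With the normalization $\widehat{g}(\xi)=\frac1{2\pi}\int_\R g(x)e^{-i\xi x}dx$, set $\psi:=\mathcal{F}^{-1}\vp$, so that $\psi(x)=\int_{-1}^{1}\vp(\xi)e^{i\xi x}\,d\xi$. Since $\vp$ is even, $\psi$ is a constant multiple of $\widehat{\vp}$, namely $\psi=2\pi\,\widehat{\vp}$, and hence $\psi\in\mathcal{R}^*$ with majorant $\psi^*:=2\pi(\widehat{\vp})^*\in L_1(\R)$.

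The first step is to prove, for $x\in\T$,
\begin{equation*}
  V_n^\vp(x)=\sum_{k\in\Z}\vp\(\tfrac kn\)e^{ikx}=n\sum_{j\in\Z}\psi\big(n(x+2\pi j)\big).
\end{equation*}
The right-hand side is the $2\pi$-periodization of $n\psi(n\cdot)$. Since $n\psi(n\cdot)\in L_1(\R)$, this series converges in $L_1(\T)$, and its $k$-th Fourier coefficient equals $\frac1{2\pi}\int_\R n\psi(nx)e^{-ikx}dx=\vp(k/n)$. Only the finitely many $k$ with $|k|\le n$ give nonzero coefficients, so the periodization agrees a.e.\ with the trigonometric polynomial $V_n^\vp$.

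Next comes the majorant. Define
\begin{equation*}
  W_n(x):=n\sum_{j\in\Z}\psi^*\big(n(x+2\pi j)\big),\qquad x\in[-\pi,\pi).
\end{equation*}
This majorizes $|V_n^\vp|$ a.e., and $\|W_n\|_{L_1(\T)}=\|\psi^*\|_{L_1(\R)}$ up to the normalization of $\mu$, uniformly in $n$. However, $W_n$ need not be decreasing on $[0,\pi)$, so we split it. For $|x|\le\pi$ the $j=0$ term $n\psi^*(nx)$ is already symmetrically decreasing. For $j\ne0$ we have $|x+2\pi j|\ge\pi(2|j|-1)\ge\pi|j|$. Therefore the tail is bounded by the constant
\begin{equation*}
  c_n:=2n\sum_{j\ge1}\psi^*(n\pi j)\le 2\int_0^\infty \psi^*(\pi t)\cdot\frac{n}{1}\,\frac{dt}{?}
\end{equation*}
which we make precise through the monotonicity comparison $n\psi^*(n\pi j)\le\frac{1}{\pi}\int_{\pi(j-1)}^{\pi j} n\psi^*(nu)\,du$. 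Summing over $j\ge1$ gives $c_n\le\frac{2}{\pi}\int_0^\infty n\psi^*(nu)\,du=\frac1\pi\|\psi^*\|_{L_1(\R)}$, so $c_n$ is bounded independently of $n$. We then take
\begin{equation*}
  (V_n^\vp)^*(x):=n\psi^*(nx)+\tfrac1\pi\|\psi^*\|_{L_1(\R)}.
\end{equation*}
It is even and non-increasing on $[0,\pi)$, and its $L_1(\T)$ norm is at most a constant times $\|\psi^*\|_{L_1(\R)}$, uniformly in $n$. We may also replace it by a genuinely pointwise majorant using the everywhere-defined series, since $V_n^\vp$ is continuous.

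The main obstacle we expect is making the Poisson summation identity rigorous under the weak hypotheses: $\vp$ is only of bounded variation and $\psi$ is merely in $\mathcal{R}^*$. The Fourier-coefficient argument above avoids any pointwise convergence issues and gives the identity a.e. The identity then holds everywhere, because both sides are continuous: the right-hand side converges uniformly on $\T$ thanks to the domination by $W_n$ with the decreasing tail bound, combined with the continuity of $\psi$. One subtlety is that $\vp(k/n)$ at the jump points must match the Fourier inversion value. For this we use that $\psi\in L_1$, so $\vp$ coincides a.e.\ with a continuous function. This is acceptable for the identity if we interpret $\vp$ as that continuous representative, or we note that the hypothesis $\widehat{\vp}\in\mathcal{R}^*$ forces $\vp$ to be continuous.
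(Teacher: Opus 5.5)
Your proof is correct and takes essentially the same route as the paper, which defers to the second part of the proof of Proposition~3.3 in K26: Poisson summation writes $V_n^\vp$ as the $2\pi$-periodization of $2\pi n\,\widehat{\vp}(n\cdot)$, and the majorant is the dilated majorant $n\psi^*(nx)$ plus a constant of size $\|\psi^*\|_{L_1(\R)}$ that absorbs all terms with $j\ne 0$ (deriving the identity from the Fourier coefficients of the periodization is a clean way to avoid pointwise Poisson summation).

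Two small repairs are needed. First, delete the unfinished display containing ``?''; the monotonicity comparison right after it is the actual bound. Second, do not claim that $\widehat{\vp}\in\mathcal{R}^*$ makes $\vp$ continuous. For $\vp$ of bounded variation it only rules out jumps, so $\vp$ may still differ from its continuous representative $\tilde\vp$ at countably many points, with $\sum|\vp-\tilde\vp|\le\frac12\operatorname{Var}\vp$. Hence $|V_n^\vp-V_n^{\tilde\vp}|\le\frac12\operatorname{Var}\vp$ uniformly in $n$, and this term is simply added to the constant in your majorant. This is exactly where the bounded-variation hypothesis is needed.
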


\begin{proof}
  The proof follows the same lines as the second part of the proof of Proposition~3.3 in~\cite{K26}.
\end{proof}

The following remark provides simple conditions on $\vp\in L_1(\R)$ to ensure that $\widehat{\vp}\in \mathcal{R}^*$.

\begin{remark}\label{reml1}
For any $\vp\in L_1(\R)$ and $r\in \N$ with $r\ge 2$, the following inequality holds (see, e.g.,~\cite{Treb77}):
$$
|\widehat{\vp}(\xi)|\le C\w_r\(\vp,\frac1{|\xi|}\)_{L_1(\R)},
$$
where the constant $C$ is independent of $\vp$ and $\xi$.
In particular, if
$$
\displaystyle\int_0^1\frac{\w_r(\vp,t)_{L_1(\R)}}{t^2}dt<\infty
$$
or, more stronger,  $\vp\in W_1^2(\R)$, then $\widehat{\vp}\in \mathcal{R}^*$.   This is a consequence of the fact that the modulus of smoothness is an increasing function and $\w_r(\vp,h)_{L_1(\R)}\le Ch^2\|\vp''\|_{L_1(\R)}$ (see, e.g.,~\cite{KT20}).
\end{remark}

To formulate the next propositions, we denote by $\eta$ a function in $C^\infty(\R)\cap \mathcal{R}$ such that $\eta(\xi)=1$ for $|\xi|\le 1$ and $\eta(\xi)=0$ for $|\xi|\ge 2$.

\begin{proposition}\label{prex2}
Let $X$ be a Banach lattice satisfying Assumption A and let
$$
w(\xi)=\frac{1-\varphi(\xi)}{\xi^s}\eta(\xi),
$$
where $w$ is understood to be continuously extended to $\xi=0$.
If $w$ is of bounded variation on $\R$ and $\widehat{w}\in\mathcal R^*$, then the operators $K_{n,r}^\vp$ ($r\in \N$ with $2r\ge s$) and $G_n^\vp$ satisfy~\eqref{A2} and~\eqref{a3}, respectively.
\end{proposition}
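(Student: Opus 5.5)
The plan is to prove \eqref{a3} for $G_n^\vp$ first and then transfer it to $K_{n,r}^\vp$ through the identity \eqref{eqKG}, by the same chain of inequalities used in Remark~\ref{remGn}.

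\emph{Step 1: $G_n^\vp$ on $\mathcal{T}_n$ is a Fourier multiplier.} Let $T\in\mathcal{T}_n$ and write $T(x)=\sum_{|j|\le n}c_je^{ijx}$. For $|j|,|k|\le n$ with $j\neq k$ we have $0<|j-k|\le 2n<2n+1$. Hence the discrete orthogonality on the $2n+1$ nodes $t_k$ gives
\[
\frac{1}{2n+1}\sum_{m=0}^{2n}T(t_m)e^{-ikt_m}=c_k,\qquad |k|\le n.
\]
Since $\supp\vp\subset[-1,1]$, the kernel $V_n^\vp$ only contains frequencies $|k|\le n$. Therefore
\[
G_n^\vp T=\sum_{|k|\le n}\vp(k/n)c_ke^{ikx},\qquad
T-G_n^\vp T=\sum_{|k|\le n}(1-\vp(k/n))c_ke^{ikx}.
\]
If $\vp$ is discontinuous at $\pm1$, the endpoint convention for $V_n^\vp$ needs care, but $\vp$ is evaluated pointwise, so the formula stands as written.

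\emph{Step 2: factor out the $s$-th derivative.} Since $\eta(k/n)=1$ for $|k|\le n$, we have $1-\vp(k/n)=(k/n)^s w(k/n)$. The factor $(ik)^s$ produces $T^{(s)}$, so
\[
T-G_n^\vp T=(-i)^s n^{-s}\,T^{(s)}*V_n^{w},
\]
where $V_n^{w}(x)=\sum_{k}w(k/n)e^{ikx}$. The $k=0$ term is harmless: $T^{(s)}$ has no constant term, and $w$ is extended continuously to $0$. The function $V_n^w$ is a trigonometric polynomial of degree at most $2n$, because $\supp w\subset[-2,2]$.

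\emph{Step 3: uniform kernel bound.} I need $\sup_n\|(V_n^w)^*\|_{L_1(\T)}<\infty$. Lemma~\ref{levpn} gives this for even $\vp$ of bounded variation with support in $[-1,1]$ and $\widehat\vp\in\mathcal R^*$. Its proof, following Proposition~3.3 of \cite{K26}, uses Poisson summation,
\[
V_n^w(x)=2\pi n\sum_{\ell\in\Z}\widehat w\bigl(n(x+2\pi\ell)\bigr)\quad\text{(suitably normalized)},
\]
together with bounded variation to justify the summation. I would rerun that argument with $w$ in place of $\vp$. The support $[-2,2]$ only changes constants, and evenness is not essential because only a symmetric decreasing majorant of $|\widehat w|$ is used. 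Lemma~\ref{lemS} then yields
\[
\|T-G_n^\vp T\|_X\le Cn^{-s}\|T^{(s)}\|_X,
\]
which is \eqref{a3}. I expect this step to be the main obstacle: one must check that the periodization of the majorant $n(\widehat w)^*(n\cdot)$ is dominated on $\T$ by a symmetrically decreasing function with $L_1(\T)$ norm bounded independently of $n$. Bounded variation of $w$ is needed to justify Poisson summation pointwise.

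\emph{Step 4: transfer to $K_{n,r}^\vp$.} Repeat the chain of Remark~\ref{remGn}. First,
\[
\|T-K_{n,r}^\vp T\|_X\le\|(I-A_{\g/n})^rT\|_X+\|A_{\g/n,r}T-G_n^\vp A_{\g/n,r}T\|_X .
\]
\begin{itemize}
\item[(a)] Since $2r\ge s$, Lemma~\ref{lemm1}(ii),(i),(iii) bound the first term by $C\|(I-\dot A_{\g/n})^{s}T\|_X\le Cn^{-s}\|T^{(s)}\|_X$. To apply Lemma~\ref{lemm1}(iii) with $(I-\dot A_{\g/n})^{2r}$, split it as $(I-\dot A)^{2r-s}(I-\dot A)^s$ and use (i).
\item[(b)] $A_{\g/n,r}T\in\mathcal{T}_n$, so Step 3 bounds the second term by $Cn^{-s}\|(A_{\g/n,r}T)^{(s)}\|_X$.
\item[(c)] $(A_{\g/n,r}T)^{(s)}=A_{\g/n,r}(T^{(s)})$, and Assumption~A bounds its norm by $C\|T^{(s)}\|_X$.
\end{itemize}
Combining (a)--(c) gives \eqref{A2}.
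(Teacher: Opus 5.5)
Your proposal is correct and follows essentially the same route as the paper. The paper simply cites \cite[Proposition~3.4]{K26} for \eqref{a3} for $G_n^\vp$, which your Steps~1--3 reconstruct via the multiplier identity $T-G_n^\vp T=(-i)^s n^{-s}T^{(s)}*V_n^w$, a Lemma~\ref{levpn}-type kernel bound and Lemma~\ref{lemS}; it then transfers \eqref{a3} to $K_{n,r}^\vp$ by exactly the chain of Remark~\ref{remGn} that you use in Step~4. One small correction: the $k=0$ coefficient of $T-G_n^\vp T$ is $(1-\vp(0))c_0$, so your identity needs $\vp(0)=1$ (implicit in the intended reading that $\vp$ is continuous at $0$, which is what lets $w$ extend continuously), and this does not follow from $T^{(s)}$ having no constant term.
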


\begin{proof}
The statement of the proposition for the operators $G_n^\varphi$ was established in~\cite[Proposition~3.4]{K26}. Consequently, the assertion for the Kantorovich-type
operators follows directly from Remark~\ref{remGn}.
\end{proof}

We denote
$$
\sinc \xi:=\left\{
            \begin{array}{ll}
              \displaystyle\tfrac2\xi\sin\tfrac \xi2, & \hbox{$\xi\neq 0$,} \\
              \displaystyle1, & \hbox{$\xi=0$.}
            \end{array}
          \right.
$$

\begin{proposition}\label{prex2pr2}
   Let $X$ be a Banach lattice satisfying Assumption A and let
     \begin{equation*}
     v(\xi)=\frac{\xi^s\eta(\xi)}{1-\vp(\xi)\(1-\(1-\sinc(\g\xi)\)^r\)},
    \end{equation*}
    where  $v$ is understood to be continuously extended to $\xi=0$.
    If $v$ is of bounded variation on $\R$ and $\widehat{v}\in\mathcal R^*$, then the operator $K_{n,r}^\vp$ ($r\in\N$ with $2r\ge s$) satisfies~\eqref{A3}.
\end{proposition}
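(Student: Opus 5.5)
The key observation is that on trigonometric polynomials $T\in\mathcal T_n$ the operator $K_{n,r}^\vp$ acts as a Fourier multiplier. The aliasing from sampling at the $2n+1$ equispaced nodes $t_k$ disappears because $A_{\g/n,r}T\in\mathcal T_n$ and $\supp\vp\subset[-1,1]$. We compute $A_h e^{ijx}=\sinc(hj)e^{ijx}$, so that
$$A_{h,r}e^{ijx}=\bigl(1-(1-\sinc(hj))^r\bigr)e^{ijx}.$$
The discrete orthogonality
$$\frac1{2n+1}\sum_k e^{i(j-l)t_k}=\delta_{jl}\quad\text{for } |j|,|l|\le n$$
then gives, for $T=\sum_{|j|\le n}c_je^{ijx}$,
$$K^\vp_{n,r}T=\sum_{|j|\le n}\vp\Bigl(\tfrac jn\Bigr)\Bigl(1-\bigl(1-\sinc(\tfrac{\g j}{n})\bigr)^r\Bigr)c_je^{ijx}.$$
At the endpoint $|j|=n$ the phases $e^{\pm int_k}$ are still distinct modulo $2n+1$, so there is no collision there. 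Consequently
$$T-K^\vp_{n,r}T=\sum_{|j|\le n}\Bigl[1-\vp\Bigl(\tfrac jn\Bigr)\Bigl(1-\bigl(1-\sinc(\tfrac{\g j}n)\bigr)^r\Bigr)\Bigr]c_je^{ijx}.$$

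Here $\Phi_n=\mathcal T_n$ (the space spanned by the $V_n^\vp(\cdot-t_k)$ lies in $\mathcal T_n$). Write the multiplier symbol as $m(\xi)=1-\vp(\xi)(1-(1-\sinc(\g\xi))^r)$. Since $\eta(j/n)=1$ for $|j|\le n$, we have the factorization
$$\Bigl(\tfrac{ij}{n}\Bigr)^s=i^s\,v\Bigl(\tfrac jn\Bigr)\,m\Bigl(\tfrac jn\Bigr),\qquad |j|\le n.$$
This yields
$$n^{-s}T^{(s)}=i^s\,(T-K^\vp_{n,r}T)*V_n^{v}$$
for every $T\in\mathcal T_n$. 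Indeed, convolving $T-K_{n,r}^\vp T\in\mathcal T_n$ with $V_n^v=\sum_k v(k/n)e^{ikx}$ multiplies its $j$th coefficient by $v(j/n)$.

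It remains to bound the convolution operator uniformly in $n$. The function $v$ is of bounded variation, is compactly supported in $[-2,2]$, and satisfies $\widehat v\in\mathcal R^*$. By the analogue of Lemma~\ref{levpn}, we get $V_n^v\in\mathcal R^*$ with $\sup_n\|(V_n^v)^*\|_{L_1(\T)}<\infty$. Lemma~\ref{levpn} is stated for even $\vp$ with support in $[-1,1]$, so we rescale: with $\tilde v(\xi)=v(2\xi)$ we have $\supp\tilde v\subset[-1,1]$ and $V_n^v=V_{2n}^{\tilde v}$. Evenness appears not to be essential in the argument of \cite{K26}. In any case $v$ has the parity of $s$, since $\sinc$ and $\vp$ are even, and the odd case is handled by the same majorant argument. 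Lemma~\ref{lemS} then gives
$$n^{-s}\|T^{(s)}\|_X\le C\|T-K^\vp_{n,r}T\|_X,$$
which is \eqref{A3}.

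The main obstacle is the uniform $\mathcal R^*$ bound for $V_n^v$, that is, passing from $\widehat v\in\mathcal R^*$ and bounded variation to a decreasing integrable majorant of the periodized kernel. I would follow the proof of Proposition~3.3 in \cite{K26}: apply Poisson summation, $V_n^v(x)=2\pi n\sum_{l}\widehat v(n(x+2\pi l))$, and use the BV property to control the tail sums. The exact multiplier computation at the endpoints $|j|=n$ also needs care. The hypothesis $2r\ge s$ plays no role beyond what is used in Proposition~\ref{prex2}; it is only needed so that $v$ is continuous at $0$, where $1-\sinc(\g\xi)\sim c\xi^2$.
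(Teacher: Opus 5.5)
Your proposal is correct and follows essentially the same route as the paper. The quadrature (discrete orthogonality) identity makes $K_{n,r}^\vp$ a Fourier multiplier on $\mathcal T_n$, which yields $n^{-s}T^{(s)}=i^s(T-K_{n,r}^\vp T)*V_n^v$, and Lemma~\ref{levpn} together with Lemma~\ref{lemS} bounds the convolution uniformly in $n$. Your rescaling $V_n^v=V_{2n}^{v(2\cdot)}$ and the parity remark are more careful than the paper, which applies Lemma~\ref{levpn} to $v$ directly.

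One side remark is backwards, though it does not affect the argument. Since $(1-\sinc(\g\xi))^r\sim c\,\xi^{2r}$, continuity of $v$ at $0$ can force $s\ge 2r$ (for example when $\vp\equiv 1$ near $0$), not $2r\ge s$. The hypothesis $2r\ge s$ is simply not used in this proof.
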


\begin{proof}
  By the quadrature formula for trigonometric polynomials, for any $T\in \mathcal{T}_n$, we have
   $$
   K_{n,r}^\vp T=A_{\g/n,r} T*V_n^\vp,
   $$
   which implies that
   $$
   T^{(s)}=(T-A_{\g/n,r} T*V_n^\vp)*V_n^v=(T-K_{n,r}^\vp T)*V_n^v.
   $$
  This together with the principle of comparison for Fourier multipliers (see, e.g.,~\cite[Ch.~7]{TB} or \cite[Sec.~2.4]{V23}) implies that inequality~\eqref{A3} holds for $K_{n,r}^\vp$  if and only if there exists a constant $C>0$ such that
  \begin{equation}\label{TUn}
    \|T*V_n^v\|_X\le C\|T\|_X,\quad T\in\mathcal{T}_{n}.
  \end{equation}
  By Lemma~\ref{levpn}, we have that $V_n^v\in\mathcal{R}^*$ and $\sup_n \|(V_n^v)^*\|_{L_1(\T)}<\infty$. Thus, applying Lemma~\ref{lemS}, we obtain~\eqref{TUn}, which proves the proposition.
\end{proof}

\begin{example}\label{ex1}
Consider the Kantorovich operator ${K}_{n}^{\vp_1}$ generated by the function $\vp_1=\chi_{[-1,1]}$ such that $V_n^{\vp_1}(x)=D_n(x)=\sum_{k=-n}^n e^{ikx}$ is the standard Dirichlet kernel.

\begin{proposition}\label{cor2+}
Let $X$ be a Banach lattice satisfying Assumption~A. Suppose that
\begin{equation}\label{fDn}
\sup_{n\in \N}\sup_{\|f\|_X\le 1}\|f*D_n\|_{X}<\infty.
\end{equation}
Then for all $f\in X$, we have
  \begin{equation*}
    \|f-{K}_{n}^{\vp_1} f\|_{X}\asymp\|(I-A_{\g/n}) f\|_{X},
  \end{equation*}
where $\asymp$ denotes a two-sided inequality with positive constants independent of $f$ and $n$.
\end{proposition}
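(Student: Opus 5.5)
We plan to derive the statement from Proposition~\ref{propdopconv} with $r=1$... but there $r=2s$ is required, so with $r=1$ we would need $s=1/2$, which is not allowed. Instead we argue directly, following the scheme of the proof of Proposition~\ref{propdopconv}, with $K_n^{\vp_1}=K_{n,1}^{\vp_1}=G_n^{\vp_1}(A_{\g/n}f)$ and $\Phi_n=\mathcal{T}_n$.

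\emph{Step 1: boundedness.} By Proposition~\ref{prex1} with $V_n^{\vp_1}=D_n$, assumption~\eqref{fDn} gives that $K_n^{\vp_1}$ satisfies~\eqref{A1}, i.e.\ $\|K_n^{\vp_1}f\|_X\le C\|f\|_X$. Moreover, $G_n^{\vp_1}$ is the classical trigonometric interpolation operator at the $2n+1$ equidistant nodes $t_k$. Hence $G_n^{\vp_1}T=T$ for $T\in\mathcal{T}_n$, and since $A_{\g/n}$ maps $\mathcal{T}_n$ into itself (it is a Fourier multiplier), we get $K_n^{\vp_1}T=A_{\g/n}T$ for all $T\in\mathcal{T}_n$. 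This is identity~\eqref{dopdopconv}; see Remark~\ref{rem1conv}(ii).

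\emph{Step 2: upper estimate.} Take $T\in\mathcal{T}_n$ with $\|f-T\|_X\le 2E_n(f)_X$. Then
\[
\|f-K_n^{\vp_1}f\|_X\le \|f-T\|_X+\|T-A_{\g/n}T\|_X+\|K_n^{\vp_1}(T-f)\|_X\le C\|f-T\|_X+\|(I-A_{\g/n})T\|_X .
\]
We bound $\|(I-A_{\g/n})T\|_X\le \|(I-A_{\g/n})(T-f)\|_X+\|(I-A_{\g/n})f\|_X\le C\|f-T\|_X+\|(I-A_{\g/n})f\|_X$ using Lemma~\ref{lemm1}(i). Finally, $E_n(f)_X\le C\|(I-\dot A_{\g/n})^2f\|_X\le C\|(I-A_{\g/n})f\|_X$ by Lemma~\ref{lemm2} (with $r=2$) and Lemma~\ref{lemm1}(ii) with $r=1$. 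This is simpler than invoking Theorem~\ref{thmain}, because it avoids needing~\eqref{A2} with $s=2$.

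\emph{Step 3: lower estimate.} As in~\eqref{dopconv1}--\eqref{dopconv2}, we write
\[
\|(I-A_{\g/n})f\|_X\le \|f-K_n^{\vp_1}f\|_X+\|A_{\g/n}f-K_n^{\vp_1}f\|_X .
\]
Since $K_n^{\vp_1}f\in\mathcal{T}_n$, we have $E_n(f)_X\le\|f-K_n^{\vp_1}f\|_X$. Using Step~1 with the near-best $T$,
\[
\|A_{\g/n}f-K_n^{\vp_1}f\|_X\le \|A_{\g/n}(f-T)\|_X+\|K_n^{\vp_1}(T-f)\|_X\le C E_n(f)_X\le C\|f-K_n^{\vp_1}f\|_X .
\]
Here Assumption~A bounds $A_{\g/n}$.

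The only delicate point is Step~1: checking that $G_n^{\vp_1}$ reproduces $\mathcal{T}_n$, which follows from the discrete orthogonality $\frac1{2n+1}\sum_k e^{ijt_k}=0$ for $0<|j|\le 2n$, and that~\eqref{fDn} indeed yields~\eqref{A1}, via Proposition~\ref{prex1} and Remark~\ref{remGn}, which uses Lemma~3.1 of~\cite{K26}. Everything else is the triangle inequality combined with Lemmas~\ref{lemm1} and~\ref{lemm2}.
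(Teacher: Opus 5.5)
Your proof is correct. Steps~1 and~3 coincide with the paper's argument. The paper also obtains \eqref{A1} from Proposition~\ref{prex1} and uses $G_n^{\vp_1}T=T$ to get \eqref{dopdopconv}, and its lower estimate is exactly \eqref{dopconv1}--\eqref{dopconv2}.

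The difference is in the upper estimate.
\begin{itemize}
\item The paper verifies \eqref{A2} with $s=2$, via $\|T-K_n^{\vp_1}T\|_X=\|(I-A_{\g/n})T\|_X\le Cn^{-2}\|T''\|_X$ (Lemma~\ref{lemm1}(iii)). It then invokes Proposition~\ref{propdopconv}, whose upper bound comes from Theorem~\ref{thmain} and Lemma~\ref{lemm1}(ii). Theorem~\ref{thmain} in turn relies on Lemma~\ref{lemm2} and on the Bernstein/Nikolskii--Stechkin-type bound of Lemma~\ref{lemm3}(ii) for the near-best polynomial.
\item Your Step~2 bypasses \eqref{A2} and Lemma~\ref{lemm3}(ii) entirely. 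Since $K_n^{\vp_1}T=A_{\g/n}T$ on $\mathcal{T}_n$, the triangle inequality reduces $\|(I-A_{\g/n})T\|_X$ to $\|(I-A_{\g/n})f\|_X+C\|f-T\|_X$. After that, only the Jackson-type bound $E_n(f)_X\le C\|(I-A_{\g/n})f\|_X$ (Lemma~\ref{lemm2} with Lemma~\ref{lemm1}(ii)) is needed.
\end{itemize}
Your route is more elementary, with no derivatives and no Bernstein-type inequality. It also makes clear that, once the reproduction identity holds on $\mathcal{T}_n$, nothing beyond \eqref{A1} and the Jackson estimate is required. The paper's route has the advantage of fitting the example into the general machinery (Theorem~\ref{thmain}, Proposition~\ref{propdopconv}), which it reuses elsewhere, e.g.\ in the non-periodic case.

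On your opening remark: the hypothesis ``$r=2s$'' in Proposition~\ref{propdopconv} is a misprint for $s=2r$. Its upper bound uses Lemma~\ref{lemm1}(ii) in the form $\|(I-\dot A_h)^{2r}f\|_X\le C\|(I-A_h)^rf\|_X$, and the paper applies it with $r=1$, $s=2$. So that proposition is in fact applicable here, but your direct argument is a sound substitute.
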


\begin{proof}
By Proposition~\ref{prex1}, we have that ${K}_{n}^{\vp_1}$ satisfies assumption~\eqref{A1}.
Further, applying equality~\eqref{eqKG} and the well-known fact that
$$
G_n^{\vp_1}T=T,\quad T\in \mathcal{T}_n,
$$
we obtain
\begin{equation*}
  \begin{split}
      \|T-{K}_{n}^{\vp_1} T\|_X&=\|T-G_n^{\vp_1}A_{\g/n}T\|_X\\
      &\le \|T-A_{\g/n}T\|_X+\|A_{\g/n}T-G_n^{\vp_1}A_{\g/n}T\|_X\\
      &=\|T-A_{\g/n}T\|_X\le Cn^{-2}\|T''\|_X,
   \end{split}
\end{equation*}
where in the latter inequality we applied Lemma~\ref{lemm1}(iii). Thus, the operator ${K}_{n}^{\vp_1}$ satisfies~\eqref{A2}. It remains only to apply Proposition~\ref{propdopconv} taking into account Remark~\ref{rem1conv}(ii).
\end{proof}
\end{example}

\begin{example}\label{ex2}
Let us denote
$$
\rho(\xi) := (1 - \xi^2)_+^\alpha, \quad \alpha > 0.
$$
Then, $K_{n}^\rho$ represents the Kantorovich-type Bochner--Riesz operator.
The next proposition provides strong converse inequalities for this class of operators.

\begin{proposition}\label{br}
  Let $X$ be a Banach lattice satisfying Assumption A. Then for all $f\in X$ we have
  \begin{equation*}
    \|f-{K}_{n}^\rho f\|_{X}\asymp \|(I-A_{\g/n}) f\|_{X},
  \end{equation*}
where $\asymp$ denotes a two-sided inequality with positive constants independent of $f$ and $n$.
\end{proposition}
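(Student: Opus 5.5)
Here $K_n^\rho=K_{n,1}^\rho$, so $r=1$, and the plan is to apply Theorem~\ref{thC} with $s=2$. Lemma~\ref{lemm1}(ii) gives $\|(I-\dot A_{\g/n})^2 f\|_X\asymp\|(I-A_{\g/n})f\|_X$, so \eqref{c0} becomes exactly the claimed equivalence. Since $2r=2\ge s$, Propositions~\ref{prex1}, \ref{prex2} and~\ref{prex2pr2} are available. It therefore suffices to verify three conditions:
\begin{enumerate}
  \item[(a)] $\widehat\rho\in\mathcal R^*$;
  \item[(b)] $w(\xi)=\frac{1-\rho(\xi)}{\xi^2}\eta(\xi)$ is of bounded variation with $\widehat w\in\mathcal R^*$;
  \item[(c)] $v(\xi)=\frac{\xi^2\eta(\xi)}{1-\rho(\xi)\sinc(\g\xi)}$ is of bounded variation with $\widehat v\in\mathcal R^*$.
\end{enumerate}
In (c) I used that for $r=1$ we have $1-(1-\sinc)^1=\sinc$. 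Also, $\rho$ is even, of bounded variation, and supported in $[-1,1]$, as required.

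For (a), the Fourier transform of $(1-\xi^2)_+^\a$ is a Bessel-type function: $\widehat\rho(x)=c_\a |x|^{-\a-1/2}J_{\a+1/2}(|x|)$. It is bounded and decays like $O(|x|^{-\a-1})$, so it has the integrable symmetric decreasing majorant $C(1+|x|)^{-\a-1}$. Alternatively, I would check the $L_1$-modulus condition of Remark~\ref{reml1}. I expect $\omega_2(\rho,t)_{L_1}\lesssim t^{\min(2,\a+1)}$, since near $\pm1$ the function behaves like $(1-|\xi|)^\a$. The integral $\int_0^1 t^{-2}\omega_2(\rho,t)\,dt$ is then finite because $\a+1>1$.

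For (b), near $0$ we have $1-\rho(\xi)=\a\xi^2+O(\xi^4)$, so $w$ is smooth near the origin. On $1\le|\xi|\le2$ it equals $\eta(\xi)/\xi^2$, which is smooth. The only singularity is at $|\xi|=1$, where $w$ behaves like $\xi^{-2}(1-(1-|\xi|)_+^\a(1+|\xi|)^\a)$. This is $\rho$-type behaviour multiplied by smooth compactly supported factors. Hence $w$ is of bounded variation, and the same Bessel or modulus-of-smoothness argument as in (a) gives $\widehat w\in\mathcal R^*$. Concretely, I would write $w=\psi_1+\psi_2\rho$ with $\psi_1,\psi_2\in C_c^\infty$. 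Then $\widehat w=\widehat\psi_1+\widehat\psi_2*\widehat\rho$, and convolving a Schwartz function with a function bounded by $C(1+|x|)^{-\a-1}$ preserves that decay.

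Step (c) is the main obstacle, because it requires the denominator to be controlled. Set $g(\xi)=1-\rho(\xi)\sinc(\g\xi)$; I need $g(\xi)\neq0$ on $\supp\eta\setminus\{0\}$, together with $g(\xi)\sim c\xi^2$ with $c>0$ near $0$.
\begin{itemize}
  \item Near $0$, since $\sinc(\g\xi)=1-\g^2\xi^2/24+\dots$, we get $g(\xi)=(\a+\g^2/24)\xi^2+O(\xi^4)$, so $v$ extends smoothly to $\xi=0$.
  \item For $0<|\xi|\le1$, we have $0\le\rho<1$ and $|\sinc|\le1$, so $\rho\,\sinc<1$ and hence $g>0$.
  \item For $|\xi|\ge1$, $g=1$.
\end{itemize}
So $g$ is bounded below on $\supp\eta$ away from $0$. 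Then $1/g$ is smooth except at $|\xi|=1$, where it inherits the $(1-|\xi|)^\a$-type singularity from $\rho$. To pass this through the reciprocal, I would write $v=\psi_3+\psi_4\rho+\dots$ via a finite Taylor expansion of $1/g$ in the quantity $\rho\,\sinc$ near $|\xi|=1$. Alternatively, I would verify the modulus condition of Remark~\ref{reml1} directly, using $\omega_2(v,t)_{L_1}\lesssim t^{\min(2,\a+1)}$ by localizing near $\pm1$. With (a)--(c) established, Propositions~\ref{prex1}, \ref{prex2}, \ref{prex2pr2} and Theorem~\ref{thC} complete the proof.
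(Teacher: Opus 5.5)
Your proposal is correct and follows essentially the paper's own proof: Theorem~\ref{thC} with $r=1$, $s=2$ combined with Lemma~\ref{lemm1}(ii), where \eqref{A1}--\eqref{A3} come from three ingredients. These are the decay $|\widehat\rho(x)|\le C(1+|x|)^{-1-\a}$, the fact $\widehat w\in\mathcal R^*$ (which the paper quotes from \cite[Example~3.2]{K26} rather than proving via your decomposition $w=\psi_1+\psi_2\rho$), and a finite geometric expansion of $1/(1-\rho(\xi)\sinc(\g\xi))$ in powers of $\rho\sinc(\g\xi)$ following \cite{RS08}. The one point you leave implicit, which the paper makes explicit, is the size of the remainder $(\rho\sinc(\g\xi))^m/(1-\rho\sinc(\g\xi))$: $m\a$ must be large enough (the paper takes $m\ge 2/\a$) that, after multiplication by $\xi^2$ and a cutoff vanishing near the origin, it lies in $W_1^2(\R)$, so that Remark~\ref{reml1} applies, while each term $\rho^j$ of the finite sum is again of Bochner--Riesz type.
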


\begin{proof}
It is well known that there exists a constant $C>0$ such that
\begin{equation*}
  |\widehat{\rho}(x)|\le \frac{C}{(1+|x|)^{1+\a}}\quad\text{for all }x\in \R.
\end{equation*}
This estimate together with Proposition~\ref{prex1} implies that ${K}_{n}^\rho$ satisfies assumption~\eqref{A1}.

In~\cite[Example~3.2]{K26}, we established that
$$
w(\xi)=\frac{1-\rho(\xi)}{\xi^2}\eta(\xi)\in \mathcal{R}^*.
$$
Hence, by Proposition~\ref{prex2}, the operator ${K}_{n}^\rho$ satisfies assumption~\eqref{A2}.

We now show that ${K}_{n}^\rho$ satisfies also assumption~\eqref{A3}.
To this end, we consider the following functions:
$$
u(\xi)=\frac{\xi^2}{1-\rho(\xi)\sinc\g\xi},
$$
$$
v(\xi)=u(\xi)\eta(\xi),
$$
and
$$
\eta_0(\xi)=\eta(4\xi),\qquad\eta_1(\xi)=\eta(\xi)-\eta_1(\xi).
$$
As above, the function $u$ is understood to be continuously extended to $\xi=0$

It is easy to see that $u\eta_0\in W_1^2(\R)$. Hence, by Remark~\ref{reml1}, we get
\begin{equation}\label{br0}
  \widehat{u\eta_0}\in \mathcal{R}^*.
\end{equation}

To investigate the function $u\eta_1$, we follow~\cite{RS08}.
Denoting
$$
\psi(\xi)=\rho(\xi)\sinc(\g\xi),
$$
we represent $u\eta_1$ as
\begin{equation*}
  u\eta_1(\xi)=\xi^2\(\eta_1(\xi)\sum_{j=0}^m (\psi(\xi))^j+\eta_1(\xi)g(\xi)\),
\end{equation*}
where
$m\ge \frac{2}{\a}$, $m\in\N$, and
$$
g(\xi)=\frac{((1-\xi^2)_+^\a\sinc(\g\xi))^m}{1-(1-\xi^2)_+^\a\sinc(\g\xi)}.
$$
It is easy to see that
\begin{equation*}
  \lim_{\xi\to1+0}g^{(s)}(\xi)=0,\quad s\in \N.
\end{equation*}
We now show that
\begin{equation}\label{br3}
  \lim_{\xi\to1-0}g^{(s)}(\xi)=0,\quad s=1,2.
\end{equation}
To this end, we need the following simple lemma.
\begin{lemma}\label{lebr} (See~\cite[p.~33]{RS08}.)
  Let
$$
F(x)=\frac{x^{\a m}}{1-x^\a},\quad 0\le x<1.
$$
Then
$$
F^{(s)}(x)=x^{\a m-s}h(x),
$$
where $h(x)$ is continuous for $0\le x<1$. In particular,
\begin{equation}\label{br4}
  \lim_{x\to +0}F^{(s)}(x)=0,\quad s=1,2.
\end{equation}
\end{lemma}

Applying this lemma, we obtain
\begin{equation*}
\begin{split}
    g^{(s)}(\xi)&=\(F\((1-\xi^2)(\sinc \g\xi)^{1/\a}\)\)^{(s)}\\
    &=\sum_{j=1}^s F^{(j)}\((1-\xi^2)(\sinc \g\xi)^{1/\a}\)P_{s,j}(\xi),
\end{split}
\end{equation*}
where $P_{s,j}$ are some functions from $C^\infty(\R)$. This, together with~\eqref{br4}, yields~\eqref{br3}. Now, it is straightforward to verify that $\xi^2\eta_1g(\xi)\in W_1^2(\R)$, which, by Remark~\ref{reml1}, implies $\mathcal{F}(\xi^2\eta_1g(\xi))\in \mathcal{R}^*$. It can also be easily verified that $\mathcal{F}(\xi^2\eta_1(\xi))$  and $\widehat{\psi^j}$ belong to $\mathcal{R}^*$; hence, we conclude  that $\widehat{u\eta_1}\in \mathcal{R}^*$.
Combining this with~\eqref{br0}, we find that $\widehat{v}\in \mathcal{R}^*$. Therefore, applying Proposition~\ref{prex2pr2}, we obtain that the operator ${K}_{n}^\rho$ satisfies assumption~\eqref{A3}.

Finally, the assertion of the proposition follows by applying Theorem~\ref{thC} and Lemma~\ref{lemm1}(ii).
\end{proof}

We remark that in the classical case $X=L_p(\T)$, $1\le p<\infty$, Propositions~\ref{br} and~\ref{cor2+} were obtained in~\cite{KP21}.
\end{example}

\begin{example}\label{ex1.5} We now construct Kantorovich operators that provide the same order of convergence as the error of best approximation $E_n(f)_X$.

We set
$$
\varphi_2(\xi) = \frac{\eta(2\xi)}{\operatorname{sinc}(\gamma\xi)} \quad\text{with}\quad 0 < \gamma \le \frac{\pi}{2}.
$$
Note that here $\gamma$ is the same parameter as in~\eqref{g} and in the definition
of the corresponding Kantorovich-type operators.

\begin{proposition}\label{prKEas}
  Let $X$ be a Banach lattice satisfying Assumption~A. Then
  \begin{equation}\label{eqKE}
    E_n(f)_X\le \|f-{K}_{n}^{\vp_2} f\|_X\le CE_{n/2}(f)_X,
  \end{equation}
  where the constat $C$ is independent of $f$ and $n$.
\end{proposition}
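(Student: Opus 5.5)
The plan is to prove the two inequalities in \eqref{eqKE} separately. The lower bound comes from the range of the operator. The upper bound is the usual Lebesgue-type argument: a uniform bound for ${K}_{n}^{\vp_2}$ together with exact reproduction of $\mathcal{T}_{n/2}$.

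\emph{Lower bound.} Since $\supp\vp_2\subset\supp\eta(2\cdot)\subset[-1,1]$, the kernel $V_n^{\vp_2}$ lies in $\mathcal{T}_n$. Hence ${K}_{n}^{\vp_2}f\in\mathcal{T}_n$, and $E_n(f)_X\le\|f-{K}_{n}^{\vp_2}f\|_X$ follows immediately. The condition $\g\le\pi/2$ makes $\sinc(\g\xi)\ge\sinc(\pi)=2/\pi>0$ for $|\xi|\le1$. So $\vp_2$ is well defined, even and smooth on $\R$, with support in $[-1,1]$.

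\emph{Uniform boundedness \eqref{A1}.} Since $\vp_2\in C_c^\infty(\R)$, it lies in $W_1^2(\R)$, and Remark~\ref{reml1} gives $\widehat{\vp_2}\in\mathcal{R}^*$. Proposition~\ref{prex1} then shows that ${K}_{n}^{\vp_2}$ satisfies \eqref{A1}, that is, $\|{K}_{n}^{\vp_2}f\|_X\le C\|f\|_X$.

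\emph{Reproduction of $\mathcal{T}_{n/2}$.} This is the key step. Take $T\in\mathcal{T}_{n/2}$. By the quadrature formula used in the proof of Proposition~\ref{prex2pr2}, ${K}_{n}^{\vp_2}T=A_{\g/n}T*V_n^{\vp_2}$, valid because $A_{\g/n}T\in\mathcal{T}_n$ and $V_n^{\vp_2}\in\mathcal{T}_n$. On Fourier coefficients, $A_{\g/n}$ acts on $e^{ikx}$ as multiplication by $\sinc(\g k/n)$. Convolution with $V_n^{\vp_2}$ multiplies by $\vp_2(k/n)$. For $|k|\le n/2$ we have $|2k/n|\le1$, so $\eta(2k/n)=1$ and $\vp_2(k/n)=1/\sinc(\g k/n)$. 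The product of the two multipliers is therefore $1$, and ${K}_{n}^{\vp_2}T=T$. The point to verify carefully is the quadrature identity: $(2n+1)^{-1}\sum_k P(t_k)Q(x-t_k)=(P*Q)(x)$ for $P,Q\in\mathcal{T}_n$, which holds because products of exponentials of degree at most $2n$ are integrated exactly by the $2n+1$ equispaced nodes. The normalization of $A_h$ must also match $\sinc(\g k/n)=\frac{2n}{\g k}\sin\frac{\g k}{2n}$, which it does for the symmetric average on $[x-h/2,x+h/2]$.

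\emph{Upper bound.} Choose $T\in\mathcal{T}_{n/2}$ with $\|f-T\|_X\le2E_{n/2}(f)_X$; such $T$ exists since $\mathcal{T}_{n/2}$ is finite-dimensional. Then
\[
\|f-{K}_{n}^{\vp_2}f\|_X\le\|f-T\|_X+\|{K}_{n}^{\vp_2}(T-f)\|_X\le(1+C)\|f-T\|_X\le CE_{n/2}(f)_X .
\]

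I expect no real obstacle here. The only delicate point is the positivity of $\sinc(\g\xi)$ on $\supp\eta(2\cdot)$, which is exactly what the restriction $\g\le\pi/2$ provides.
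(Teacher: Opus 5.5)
Your proposal is correct and takes essentially the paper's route. You get uniform boundedness from Proposition~\ref{prex1}, feeding in $\widehat{\vp_2}\in\mathcal{R}^*$ from Remark~\ref{reml1} directly, where the paper phrases this step through Lemma~\ref{levpn}. The remaining steps match the paper: exact reproduction ${K}_{n}^{\vp_2}T=A_{\g/n}T*V_n^{\vp_2}=T$ for $T\in\mathcal{T}_{n/2}$ via the quadrature formula, the triangle inequality with a near-best $T$, and the lower bound from ${K}_{n}^{\vp_2}f\in\mathcal{T}_n$. One small overstatement, which does not affect the argument: $\sinc(\g\xi)>0$ for $|\xi|\le 1$ already holds for every $0<\g<2\pi$, so this positivity is not exactly what the restriction $\g\le\pi/2$ provides.
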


\begin{proof}
It is straightforward to verify that $\varphi_2 \in W_1^2(\mathbb{R})$,
which implies that $V_n^{\varphi_2} \in \mathcal{R}^*$ and
$\sup_n \|(V_n^{\varphi_2})^*\|_{L_1(\mathbb{\T})} < \infty$
by virtue of Lemma~\ref{levpn} and Remark~\ref{reml1}. Therefore, by Proposition~\ref{prex1}, the operator ${K}_{n}^{\vp_2}$ satisfies assumption~\eqref{A1}.
Further, let $T\in \mathcal{T}_{n/2}$ be such that $\|f-T\|_X\le 2E_{n/2}(f)_X$. Taking into account that by the quadrature formula,
   $$
   {K}_{n}^{\vp_2} T=A_{\g/n} T*V_n^{\vp_2}=T,
   $$
and applying inequality~\eqref{A1}, we obtain
\begin{equation*}
  \begin{split}
     \|f-{K}_{n}^{\vp_2} f\|_X&\le \|f-T\|_X+\|T-{K}_{n}^{\vp_2} T\|_X+\|{K}_{n}^{\vp_2} (f-T)\|_X\\
     &=\|f-T\|_X+\|{K}_{n}^{\vp_2} (f-T)\|_X\\
     &\le C\|f-T\|_X\le CE_{n/2}(f)_X.
   \end{split}
\end{equation*}
The lower estimate in~\eqref{eqKE} follows from the fact that ${K}_{n}^{\vp_2} f\in \mathcal{T}_n$.
\end{proof}

Under an additional restriction on $X$, the Kantorovich operator ${K}_{n}^{\vp_3}$, where
$$
\vp_3(\xi)=\frac{\chi_{[-1,1]}(\xi)}{\sinc(\g\xi)},\quad 0<\g\le\frac\pi2,
$$
provides more shaper version of~\eqref{eqKE}.

\begin{proposition}\label{prKEas2}
  Let $X$ be a Banach lattice satisfying Assumption~A and~\eqref{fDn}. Then
  \begin{equation*}
    \|f-{K}_{n}^{\vp_3} f\|_X\asymp E_{n}(f)_X,
  \end{equation*}
where $\asymp$ is a two-sided inequality with positive constants independent of $f$ and $n$.
\end{proposition}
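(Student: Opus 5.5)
The plan is to repeat the argument of Proposition~\ref{prKEas}, now using the sharper reproduction property of $\vp_3$ on all of $\mathcal{T}_n$ rather than only on $\mathcal{T}_{n/2}$. There are three steps, in this order: the lower bound, the reproduction property, and uniform boundedness~\eqref{A1}.

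\textbf{Lower bound.} Since ${K}_{n}^{\vp_3} f\in \mathcal{T}_n$, we have $E_n(f)_X\le \|f-{K}_{n}^{\vp_3} f\|_X$ at once.

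\textbf{Reproduction of $\mathcal{T}_n$.} Take $T\in\mathcal{T}_n$. By the quadrature formula, as in the proof of Proposition~\ref{prex2pr2},
\[
{K}_{n}^{\vp_3}T=A_{\g/n}T*V_n^{\vp_3}.
\]
The $k$-th Fourier coefficient of $A_{\g/n}T$ equals $\sinc(\g k/n)\,\widehat{T}(k)$. For $|k|\le n$ we have $|\g k/n|\le \g\le\pi/2$, so $\sinc(\g k/n)\neq 0$, and multiplying by $\vp_3(k/n)=1/\sinc(\g k/n)$ gives back $\widehat{T}(k)$. Hence ${K}_{n}^{\vp_3}T=T$.

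\textbf{Boundedness~\eqref{A1}.} Write
\[
V_n^{\vp_3}=D_n*V_n^{\psi},\qquad \psi(\xi)=\frac{\eta(\xi)}{\sinc(\g\xi)} .
\]
On the support of $\eta$ the denominator $\sinc(\g\xi)$ must not vanish. Its first zero is at $|\xi|=2\pi/\g\ge4$, while $\eta$ vanishes for $|\xi|\ge2$, so $\psi\in C^\infty_c\subset W_1^2(\R)$ and $\psi$ is even. Remark~\ref{reml1} then gives $\widehat\psi\in\mathcal R^*$, and Lemma~\ref{levpn} gives $\sup_n\|(V_n^\psi)^*\|_{L_1}<\infty$. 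By Lemma~\ref{lemS}, convolution with $V_n^\psi$ is uniformly bounded on $X$; together with~\eqref{fDn}, so is convolution with $V_n^{\vp_3}$, which is condition~\eqref{zv1}. Proposition~\ref{prex1} then yields~\eqref{A1} for ${K}_{n}^{\vp_3}$.

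\textbf{Upper bound.} Choose $T\in\mathcal{T}_n$ with $\|f-T\|_X\le 2E_n(f)_X$. Using reproduction and~\eqref{A1},
\[
\|f-{K}_{n}^{\vp_3}f\|_X\le \|f-T\|_X+\|{K}_{n}^{\vp_3}(f-T)\|_X\le C E_n(f)_X .
\]

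The main obstacle is the factorization step. There one has to check that convolution with $V_n^{\vp_3}$ really splits as $D_n$ composed with a smooth multiplier, which needs $\psi=1/\sinc(\g\cdot)$ on $[-1,1]$. It also needs Proposition~\ref{prex1} to hold under~\eqref{zv1} alone, without requiring $\widehat{\vp_3}\in\mathcal R^*$; this is true, since $\vp_3$ has a jump at $\pm1$ and $\widehat{\vp_3}\notin\mathcal R^*$.
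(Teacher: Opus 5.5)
Your proof is correct and is essentially the paper's argument. You obtain \eqref{A1} from the factorization $V_n^{\vp_3}=D_n*V_n^{\psi}$ together with Remark~\ref{reml1}, Lemma~\ref{levpn}, Lemma~\ref{lemS} and \eqref{fDn}, and then use reproduction ${K}_{n}^{\vp_3}T=T$ on $\mathcal{T}_n$ for the upper bound and ${K}_{n}^{\vp_3}f\in\mathcal{T}_n$ for the lower one.

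Two remarks on the factorization. Your $\psi=\eta/\sinc(\g\cdot)$ is what the paper's factor $\vp_3\eta$ has to mean, since literally $\vp_3\eta=\vp_3\notin W_1^2(\R)$. Also, $\supp\psi\subset[-2,2]$ rather than $[-1,1]$ as Lemma~\ref{levpn} requires, so apply that lemma through the rescaling $V_n^{\psi}=V_{2n}^{\psi(2\cdot)}$.
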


\begin{proof}
First, we show that ${K}_{n}^{\vp_3}$ satisfies assumption~\eqref{A1}. To this end, we use the following representation for the kernel $V_{n}^{\vp_3}$,
  $$
  V_n^{\vp_3}=V_n^{\vp_1}*V_{n}^{\vp_3\eta},
  $$
  It is clear that $\vp_3\eta\in W_1^2(\R)$. Hence, applying Lemma~\ref{levpn} together with Remark~\ref{reml1}, we get that
  \begin{equation*}
    V_{n}^{\vp_3\eta}\in \mathcal{R}^*\quad\text{with}\quad \sup_n \|(V_{n}^{\vp_3\eta})^*\|_{L_1(\T)}<\infty.
  \end{equation*}
  This together with the condition~\eqref{fDn} yields
  $$
  \|f*V_n^{\vp_3}\|_X=\|f*V_n^{\vp_1}*V_{n}^{\vp_3\eta}\|_X\le C \|f*V_n^{\vp_1}\|_X \le C\|f\|_X.
  $$
  Thus, using Proposition~\ref{prex1}, we conclude that ${K}_{n}^{\vp_3}$ satisfies assumption~\eqref{A1}.

The rest of the proof is analogous to the proof of Proposition~\ref{prKEas}, using the facts that
$$
{K}_{n}^{\vp_3}T=A_{\g/n}T*V_n^{\vp_3}=T\quad\text{for all } T\in \mathcal{T}_n,
$$
and that ${K}_{n}^{\vp_3}f\in \mathcal{T}_n$.
\end{proof}
\end{example}

\section{Main results in the non-periodic case}
Analogues of the results from Section~\ref{main} remain valid for non-periodic functions $f$  defined on $\R$ and Kantorovich-type operators
\begin{equation*}
\mathcal{K}_{\s,r} f(x):=\sum_{k\in \Z} A_{\g/\s,r}f(x_{k,\s})\vp_{k,\s}(x),
\end{equation*}
where $\vp_{k,\s} \in \Phi_\s$, $k\in \Z$, and  $\mathcal{X}_\s=(x_{k,\s})_{k\in\Z}$ is a set of distinct points for which there exist positive constants $\g$ and $\g'$ satisfying
\begin{equation*}
  \frac{\g}{\s}\le x_{k+1,\s}-x_{k,\s}\le \frac{\g'}{\s},\quad k\in \Z.
\end{equation*}
For convenience, we set $\mathcal{K}_{0,r}:=0$. 

Throughout this section, we assume that the series defining $\mathcal{K}_{\sigma,r} f$
converges in the norm of $X$ for any $f \in X(\mathbb{R})$. In order to guarantee this convergence for Kantorovich-type operators with non-compactly supported kernels
$\varphi_{k,\sigma}$, we additionally require that the Banach lattice $X(\mathbb{R})$ possesses an order continuous norm and contains the Schwartz space $\mathcal{S}$ as a dense
subspace. We recall that the norm in $X$ is order continuous if for any sequence such that $f_n \downarrow 0$ a.e., it follows that $\|f_n\|_X \to 0$ (see, e.g.,~\cite[Ch.~2, Sec.~2.4]{MN91}).

Similarly to the periodic case, we introduce the discrete seminorm
$$
\|f\|_{X_\sigma(\mathbb{R})} := \bigg\| \sum_{k\in \mathbb{Z}} |f(x_{k,\s})| \chi_{[x_{k,\s}, x_{k+1,\s})} \bigg\|_{X(\mathbb{R})}
$$
and define the space $X_\sigma(\mathbb{R})$ as the set of all functions $f \in X(\mathbb{R})$
such that $\|f\|_{X_\sigma(\mathbb{R})} < \infty$. According to~\cite[Lemma~3.1]{K26},
for any $f \in X(\mathbb{R})$, the following inequality holds:
\begin{equation}\label{Asfx}
  \|A_{\gamma/\sigma} f\|_{X_\sigma(\mathbb{R})} \le C \|f\|_{X(\mathbb{R})},
\end{equation}
where the constant $C$ is independent of $f$ and $\sigma$.

In this section, for convenience, we write $X=X(\R)$ and $X_\s=X_\s(\R)$.

We use the following assumptions on
$\mathcal{K}_{\s,r}$, involving a fixed parameter $s\in \N$:

\begin{equation}\label{a1r}\tag{$\mathcal{A}_1$}
  \|\mathcal{K}_{\s,r} f\|_X\le C_1\|f\|_{X},\quad f\in X,\quad \s\ge 1,
\end{equation}

\begin{equation}\label{a3r}\tag{$\mathcal{A}_2$}
  \|g-\mathcal{K}_{\s,r} g\|_X\le C_2\s^{-s}\|g^{(s)}\|_X, \quad g\in \mathcal{B}_X^\s,\quad \s\ge 1,
\end{equation}

\begin{equation}\label{a4r}\tag{$\mathcal{A}_3$}
  C_3\s^{-s}\|\vp^{(s)}\|_X \le \|\vp-\mathcal{K}_{\s,r} \vp\|_X, \quad \vp\in \Phi_\s,\quad \s\ge 1.
\end{equation}

\smallskip

\noindent Here $C_1>0$ depends only on $r$ and $X$;  $C_2, C_3>0$ depend only on $s$, $r$, and $X$. 

\smallskip

The proofs from Section~\ref{main} for the periodic setting carry over
to the spaces $X(\mathbb{R})$ and the operators $\mathcal{K}_{\sigma,r}$ via standard
adjustments. Therefore, we restrict our attention here to formulating the main
theorems and providing a detailed verification of the examples. This is essential
both for justifying the use of convolution operators and for ensuring that the
corresponding Kantorovich-type series are well-defined.

\subsection{Direct and inverse inequalities}\label{secir}

By arguments analogous to those used in the proofs of Theorems~\ref{thmain}
and~\ref{thinvk}, we derive the following direct and inverse estimates
for Kantorovich-type operators on $\mathbb{R}$.

\begin{theorem}\label{thstr}
Let ${X(\R)}$ be a Banach lattice satisfying Assumption~A, and let $r, s \in \N$. Suppose that $\mathcal{K}_{\s,r}$ satisfies assumptions~\eqref{a1r} and~\eqref{a3r}. Then for all $f\in X$, we have
\begin{equation*}
  \|f-\mathcal{K}_{\s,r} f\|_{X}\le C\|(I-\dot A_{\g/\s})^s f\|_{X},
\end{equation*}
where the constant $C$ is independent of $f$ and $\s$.
\end{theorem}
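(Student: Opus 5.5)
The proof will mirror that of Theorem~\ref{thmain}, with band-limited functions of type $\s$ in place of trigonometric polynomials. Fix $f\in X$ and $\s\ge 1$. Since $\mathcal{B}_X^\s$ is a linear subspace, we can choose $g\in\mathcal{B}_X^\s$ with
\[
\|f-g\|_X\le 2E_\s(f)_X .
\]
(If $E_\s(f)_X=0$, take $g$ with $\|f-g\|_X\le\varepsilon$ and let $\varepsilon\to0$ at the end.) By linearity of $\mathcal{K}_{\s,r}$, which is legitimate because we assume the defining series converges in $X$ for every element of $X$, we split
\[
\|f-\mathcal{K}_{\s,r}f\|_X\le \|f-g\|_X+\|g-\mathcal{K}_{\s,r}g\|_X+\|\mathcal{K}_{\s,r}(f-g)\|_X .
\]

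The first and third terms are controlled together. By~\eqref{a1r}, the third term is at most $C_1\|f-g\|_X$, so both are bounded by $C E_\s(f)_X$. The non-periodic version of Lemma~\ref{lemm2} then gives $E_\s(f)_X\le C\|(I-\dot A_{\g/\s})^s f\|_X$; that lemma is stated for $\mathcal{T}_n$, but the paper refers to~\cite{V23} for both $\mathcal{T}_n$ and $\mathcal{B}_X^\s$.

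For the middle term we use~\eqref{a3r} to get
\[
\|g-\mathcal{K}_{\s,r}g\|_X\le C_2\s^{-s}\|g^{(s)}\|_X .
\]
Lemma~\ref{lemm3}(ii), in its $\mathcal{B}_X^\s$ form with $n$ replaced by $\s$, bounds the derivative of a near-best approximant:
\[
\|g^{(s)}\|_X\le C\s^s\|(I-\dot A_{\g/\s})^s f\|_X .
\]
Combining the three estimates yields the claim.

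The main obstacle is to check that the auxiliary lemmas genuinely apply for real parameters $\s\ge1$ rather than integer $n$. Lemmas~\ref{lemm2} and~\ref{lemm3} are quoted from~\cite{V23} with the remark that they hold for $\mathcal{B}_X^n$; for non-integer $\s$ one can rescale, or use monotonicity: $\mathcal{B}_X^{\lfloor\s\rfloor}\subset\mathcal{B}_X^\s\subset\mathcal{B}_X^{\lceil\s\rceil}$ and $\s\asymp\lceil\s\rceil$ for $\s\ge1$. Lemma~\ref{lemm1} then lets us compare $\|(I-\dot A_{\g/\s})^s f\|_X$ with the same quantity at a comparable step up to constants. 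I expect this to be routine, so no essential new difficulty arises.
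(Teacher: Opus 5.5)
Your proposal is correct and follows the same argument as the paper, namely the $\R$-analogue of the proof of Theorem~\ref{thmain}: take a near-best $g\in\mathcal{B}_X^\s$, split into three terms, and apply \eqref{a1r}, \eqref{a3r}, and the band-limited versions of Lemmas~\ref{lemm2} and~\ref{lemm3}(ii). Your closing paragraph is unnecessary, since the paper uses these lemmas for real parameters $\s>0$ directly; note also that the fallbacks you sketch would not work as stated, because dilations need not be bounded on a general lattice $X$ and Lemma~\ref{lemm1} gives no comparison of $\|(I-\dot A_h)^s f\|_X$ across different steps $h$.
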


\begin{theorem}\label{thinvr}
Let $X(\R)$ be a Banach lattice satisfying Assumption~A, and let $r, s \in \N$.
Then for all $f\in X$, we have
\begin{equation*}
\begin{split}
  \|(I-\dot A_{\g/\s})^s f\|_{X} \le \frac{C}{\s^s}\sum_{\nu=0}^{\lfloor\s\rfloor} (\nu+1)^{s-1}\|f-\mathcal{K}_{\nu,r} f\|_{X},
\end{split}
\end{equation*}
where the constant $C$ is independent of $f$ and $\s$.
\end{theorem}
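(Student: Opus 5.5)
This follows the proof of Theorem~\ref{thinvk}. It reduces to the inverse estimate of Lemma~\ref{leminvk}, applied on $\R$ to the nested family $\Phi_\s$. The only structural fact needed is that $\mathcal{K}_{\nu,r} f\in\Phi_\nu$ for each integer $\nu$ with $0\le \nu\le\lfloor\s\rfloor$. The convention $\mathcal{K}_{0,r}=0$ and $\Phi_0=\{0\}$ covers $\nu=0$. For $\nu\ge1$ this holds because $\mathcal{K}_{\nu,r} f$ is the $X$-norm limit of finite linear combinations of the $\vp_{k,\nu}\in\Phi_\nu$; this uses the standing assumption that the defining series converges in $X$. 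If $\Phi_\nu$ is not closed, I would avoid any closedness issue by working with $E(f,\Phi_\nu)_X$ directly. For any $\varepsilon>0$ a partial sum $\vp\in\Phi_\nu$ satisfies $\|f-\vp\|_X\le \|f-\mathcal{K}_{\nu,r} f\|_X+\varepsilon$. Hence
\begin{equation*}
E(f,\Phi_\nu)_X\le \|f-\mathcal{K}_{\nu,r} f\|_X,\qquad \nu=0,1,\dots,\lfloor\s\rfloor.
\end{equation*}

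Next, apply Lemma~\ref{leminvk} with parameter $\s$ in place of $n$. The lemma is stated for $n\in\N$ or $n>0$, so I would use it in the form
\begin{equation*}
\|(I-\dot A_{\g/\s})^s f\|_X\le C\s^{-s}\sum_{\nu=0}^{\lfloor\s\rfloor}(\nu+1)^{s-1}E(f,\Phi_\nu)_X .
\end{equation*}
If only the integer version is available, I would pass from $\s$ to $n=\lfloor\s\rfloor$ (for $\s\ge1$). Since $\g/\s$ and $\g/n$ are comparable, with $\g/\s\le\g/n\le2\g/\s$, I need $\|(I-\dot A_{h})^s f\|_X\le C\|(I-\dot A_{2h})^s f\|_X$. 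This follows from the identity $I-\dot A_{2h}=(I-\dot A_h)\frac{I+\dot A_h(\cdot+h)}{2}$-type factorizations. More simply, it follows from the equivalence $\sup_{0\le t\le h}\|(I-\dot A_t)^s f\|_X\asymp\|(I-\dot A_h)^s f\|_X$, the analogue of \eqref{modav2} in $X$ established in~\cite{V23}. In the same step, $n^{-s}\le 2^s\s^{-s}$.

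Substituting the bound on $E(f,\Phi_\nu)_X$ from the first paragraph into this sum gives the claim. The only step needing care is this monotonicity in the step size. I expect it to be the main, though minor, obstacle when the lemma is only available for integer parameters. Otherwise the argument is a direct transcription of the periodic proof.
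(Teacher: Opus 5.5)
Your proposal is correct and is essentially the paper's argument. The paper simply invokes Lemma~\ref{leminvk} together with $\mathcal{K}_{\nu,r}f\in\Phi_\nu$; your $\varepsilon$-argument for a possibly non-closed $\Phi_\nu$ and your reduction to $n=\lfloor\s\rfloor$ only make explicit what the paper leaves implicit. One caution: a factorization $I-\dot A_{2h}=(I-\dot A_h)B$ with bounded $B$ would give the reverse of the inequality you need. Rely instead on the quasi-monotonicity $\|(I-\dot A_t)^s f\|_X\le C\|(I-\dot A_h)^s f\|_X$ for $0<t\le h$, which follows from Lemmas~\ref{lemm1}(i),(iii), \ref{lemm2}, and~\ref{lemm3}(ii) applied to a near-best approximant from $\mathcal{B}_X^{\g/h}$.
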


The application of Theorems~\ref{thstr} and~\ref{thinvr} yields the following corollary.

\begin{corollary}\label{cor2r}
  Under the conditions of Theorem~\ref{thstr}, the following properties are equivalent for all $\a \in (0,s)$:
  \begin{itemize}
    \item[(i)]  $\|f-\mathcal{K}_{\s,r} f\|_{X}=\mathcal{O}(\s^{-\a})$,
    \item[(ii)] $\|(I-\dot A_{\g/\s})^s f\|_{X}=\mathcal{O}(\s^{-\a})$.
  \end{itemize}
\end{corollary}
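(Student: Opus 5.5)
The equivalence follows by combining the direct estimate of Theorem~\ref{thstr} with the inverse estimate of Theorem~\ref{thinvr}, exactly as Corollary~\ref{cor2} is obtained in the periodic case.

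\emph{Implication (ii)$\Rightarrow$(i).} Assume $\|(I-\dot A_{\g/\s})^s f\|_{X}\le M\s^{-\a}$ for $\s\ge1$. Theorem~\ref{thstr} then gives directly $\|f-\mathcal{K}_{\s,r} f\|_X\le CM\s^{-\a}$. Only~\eqref{a1r} and~\eqref{a3r} are used here, which are among the hypotheses.

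\emph{Implication (i)$\Rightarrow$(ii).} Assume $\|f-\mathcal{K}_{\nu,r} f\|_X\le M(\nu+1)^{-\a}$ for all integers $\nu\ge1$. For $\nu=0$ we have $\mathcal{K}_{0,r}=0$, so the corresponding term equals $\|f\|_X$, a finite constant. Apply Theorem~\ref{thinvr} with $\s\ge1$:
\begin{equation*}
\|(I-\dot A_{\g/\s})^s f\|_{X}\le \frac{C}{\s^s}\Big(\|f\|_X+M\sum_{\nu=1}^{\lfloor\s\rfloor}(\nu+1)^{s-1-\a}\Big).
\end{equation*}
Since $\a<s$, the exponent $s-1-\a$ exceeds $-1$. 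Comparing the sum with an integral therefore gives
\begin{equation*}
\sum_{\nu=1}^{\lfloor\s\rfloor}(\nu+1)^{s-1-\a}\le C\s^{s-\a}.
\end{equation*}
Hence the right-hand side is at most $C(\|f\|_X\s^{-s}+M\s^{-\a})\le C'\s^{-\a}$, because $\s^{-s}\le\s^{-\a}$ for $\s\ge1$.

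The only delicate point is this summation step. It is exactly where the restriction $\a<s$ is needed: for $\a=s$ the sum would contribute a logarithmic factor. Two further details need attention. First, the $\nu=0$ term must be absorbed as described above. Second, for $0<\s<1$ both quantities in (i) and (ii) are bounded by $C\|f\|_X$, by~\eqref{a1r} and Lemma~\ref{lemm1}(i), so the $\mathcal{O}$-statements only concern large $\s$.
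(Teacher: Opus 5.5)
Your proposal is correct and is exactly the route the paper indicates: Theorem~\ref{thstr} gives (ii)$\Rightarrow$(i) directly, and Theorem~\ref{thinvr} combined with the bound $\sum_{\nu=1}^{\lfloor\s\rfloor}(\nu+1)^{s-1-\a}\le C\s^{s-\a}$ (which is where $\a<s$ is used) gives (i)$\Rightarrow$(ii). One small correction: \eqref{a1r} is only assumed for $\s\ge1$, so it does not justify your aside about $0<\s<1$; that aside is unnecessary anyway, since the $\mathcal{O}$-statements are asymptotic. The actual role of \eqref{a1r} is to bound the finitely many terms $\|f-\mathcal{K}_{\nu,r}f\|_X$ with $1\le\nu<\nu_0$, before the assumed $\mathcal{O}(\nu^{-\a})$ bound takes effect.
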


In a similar manner to the periodic case, the operators $\mathcal{K}_{\sigma,r}$ can be
expressed in terms of the general sampling operators
\begin{equation*}
  \mathcal{G}_{\sigma}f(x) := \sum_{k\in \mathbb{Z}} f(x_{k,\sigma})\varphi_{k,\sigma}(x)
\end{equation*}
via the relation
$$
\mathcal{K}_{\sigma,r}f = \mathcal{G}_{\sigma}(A_{\gamma/\sigma,r}f).
$$

We impose the following two conditions on $\mathcal{G}_\sigma$:
\begin{equation}\label{b1r}\tag{$\mathcal{B}_1$}
  \|\mathcal{G}_\sigma f\|_X \le C_1 \|f\|_{X_\sigma}, \quad f \in X_\sigma(\mathbb{R}), \quad \sigma \ge 1,
\end{equation}
\begin{equation}\label{b2r}\tag{$\mathcal{B}_2$}
  \|g - \mathcal{G}_\sigma g\|_X \le C_2 \sigma^{-s} \|g^{(s)}\|_X, \quad g \in \mathcal{B}_X^\sigma, \quad \sigma \ge 1.
\end{equation}
Here, the positive constant $C_1$ depends only on $X$, whereas $C_2$ depends only on $s$ and $X$.

\begin{remark}\label{remGnr}
Analogously to Remark~\ref{remGn} dealing with the periodic case, it can be shown
that if $\mathcal{G}_\sigma$ satisfies~\eqref{b1r} and~\eqref{b2r}, then the conclusions
of Theorems~\ref{thstr} and~\ref{thinvr} remain valid for the operators
$\mathcal{K}_{\sigma,r}f = \mathcal{G}_{\sigma}(A_{\gamma/\sigma,r}f)$.
\end{remark}

\subsection{Strong converse inequalities}
By employing arguments similar to those in the proofs of Theorem~\ref{thC} and Proposition~\ref{propdopconv}, we establish the following strong converse theorems for Kantorovich-type operators on $X(\mathbb{R})$.

\begin{theorem}\label{thCr}
  Let $X(\R)$ be a Banach lattice satisfying Assumption~A, and let $r, s \in \N$ with $2r\ge s$. Assume that $\mathcal{K}_{\s,r}$ satisfies assumptions~\eqref{a1r}, \eqref{a3r}, and~\eqref{a4r}.
  Then, for all $f\in X$, we have
  \begin{equation*}
    \|f-\mathcal{K}_{\s,r} f\|_{X}\asymp \|(I-\dot A_{\g/\s})^s f\|_{X},
  \end{equation*}
  where $\asymp$ denotes a two-sided inequality with positive constants independent of $f$ and $\s$.
\end{theorem}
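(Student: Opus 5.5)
The plan is to mimic the proof of Theorem~\ref{thC}, with the upper bound coming from Theorem~\ref{thstr} and the lower bound obtained by splitting $f$ through $\mathcal{K}_{\s,r}f\in\Phi_\s$.

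\textbf{Upper estimate.} Under \eqref{a1r} and \eqref{a3r}, Theorem~\ref{thstr} gives directly
$$\|f-\mathcal{K}_{\s,r}f\|_X\le C\|(I-\dot A_{\g/\s})^s f\|_X .$$

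\textbf{Lower estimate, first step.} Write $f=(f-\mathcal{K}_{\s,r}f)+\mathcal{K}_{\s,r}f$. For the first piece, Lemma~\ref{lemm1}(i) gives
$$\|(I-\dot A_{\g/\s})^s(f-\mathcal{K}_{\s,r}f)\|_X\le C\|f-\mathcal{K}_{\s,r}f\|_X .$$
For the second piece, $\vp:=\mathcal{K}_{\s,r}f$ lies in $\Phi_\s$. By the Bernstein property of $\Phi_\s$, each derivative $\vp^{(j)}$ is again in $X$, so $\vp\in X^s$. Lemma~\ref{lemm1}(iii) in the $\dot A$ form, with $r=s$, then yields
$$\|(I-\dot A_{\g/\s})^s\vp\|_X\le C\s^{-s}\|\vp^{(s)}\|_X .$$
The lemma is stated for general $h>0$, so it applies on $\R$ with $h=\g/\s$.

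\textbf{Lower estimate, second step.} Apply \eqref{a4r} to $\vp$, then linearity and \eqref{a1r}:
$$\s^{-s}\|\vp^{(s)}\|_X\le C_3^{-1}\|\vp-\mathcal{K}_{\s,r}\vp\|_X=C_3^{-1}\|\mathcal{K}_{\s,r}(f-\mathcal{K}_{\s,r}f)\|_X\le C\|f-\mathcal{K}_{\s,r}f\|_X .$$
Combining the two steps gives $\|(I-\dot A_{\g/\s})^s f\|_X\le C\|f-\mathcal{K}_{\s,r}f\|_X$, which is the lower estimate.

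\textbf{Main obstacle.} The delicate point is justifying the identity $\mathcal{K}_{\s,r}(\mathcal{K}_{\s,r}f)=\mathcal{K}_{\s,r}\vp$ and the linearity step in the non-periodic setting. Here the defining series is infinite, so one must use the standing assumption that it converges in $X$ for every $f\in X$, together with the fact that $\mathcal{K}_{\s,r}f\in X$. Since $\mathcal{K}_{\s,r}$ is then a well-defined linear operator on $X$, bounded by \eqref{a1r}, the computation is legitimate. We also need $\vp^{(s)}\in X$ to invoke Lemma~\ref{lemm1}(iii), which is supplied by the Bernstein inequality for $\Phi_\s$. The hypothesis $2r\ge s$ enters only through the verification of the assumptions in the examples, not in this argument.
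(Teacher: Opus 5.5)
Your proposal is correct and follows essentially the argument the paper intends, namely the proof of Theorem~\ref{thC} transferred to $\R$. The upper bound comes from Theorem~\ref{thstr}; the lower bound comes from splitting $f=(f-\mathcal{K}_{\s,r}f)+\mathcal{K}_{\s,r}f$, applying Lemma~\ref{lemm1}(i) and (iii), and then using \eqref{a4r} together with \eqref{a1r}. Your observation that the hypothesis $2r\ge s$ plays no role in this argument is also accurate.
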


\begin{proposition}\label{propdopconvr}
    Let $X, r, s$ be as in Theorem~\ref{thCr} with $r=2s$. Assume that  $\mathcal{K}_{\s,r}$ satisfies \eqref{a1r}, \eqref{a3r}, and
    \begin{equation*}
      \mathcal{K}_{\s,r}\vp=A_{\g/\s,r}\vp\quad\text{for every } \vp\in \Phi_\s.
    \end{equation*}
     Then for all $f\in X$, we have
  \begin{equation*}
    \|f-\mathcal{K}_{\s,r} f\|_{X}\asymp\|(I-A_{\g/\s})^r f\|_{X},
  \end{equation*}
where $\asymp$ denotes a two-sided inequality with positive constants independent of $f$ and $\s$.
\end{proposition}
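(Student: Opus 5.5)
We follow the proof of Proposition~\ref{propdopconv}, replacing trigonometric polynomials by band-limited functions and $n$ by $\s$. For the lower estimate we write, using $A_{\g/\s,r}=I-(I-A_{\g/\s})^r$,
\begin{equation*}
\|(I-A_{\g/\s})^r f\|_X=\|f-A_{\g/\s,r}f\|_X\le \|f-\mathcal{K}_{\s,r}f\|_X+\|A_{\g/\s,r}f-\mathcal{K}_{\s,r}f\|_X .
\end{equation*}
To bound the second term we choose $\vp\in\Phi_\s$ with $\|f-\vp\|_X\le 2E(f,\Phi_\s)_X$. If $E(f,\Phi_\s)_X=0$, we instead take $\vp$ with $\|f-\vp\|_X\le \epsilon$ and let $\epsilon\to0$. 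By the hypothesis $\mathcal{K}_{\s,r}\vp=A_{\g/\s,r}\vp$ we then have
\begin{equation*}
\|A_{\g/\s,r}f-\mathcal{K}_{\s,r}f\|_X\le \|A_{\g/\s,r}(f-\vp)\|_X+\|\mathcal{K}_{\s,r}(\vp-f)\|_X\le C\|f-\vp\|_X .
\end{equation*}
Here the first term is handled by Lemma~\ref{lemm1}(i), since $A_{\g/\s,r}=I-(I-A_{\g/\s})^r$ is bounded uniformly in $\s$, and the second by \eqref{a1r}.

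The remaining step is to show $\|f-\vp\|_X\le 2E(f,\Phi_\s)_X\le 2\|f-\mathcal{K}_{\s,r}f\|_X$. This holds because $\mathcal{K}_{\s,r}f\in\Phi_\s$: each $\vp_{k,\s}\in\Phi_\s$, and the series defining $\mathcal{K}_{\s,r}f$ converges in $X$ by the standing assumption of this section. The only subtle point is that $\Phi_\s$ must be closed for the limit of partial sums to remain in $\Phi_\s$. If closedness is not available, we may use the closure of $\Phi_\s$ instead. The best-approximation error is unchanged under closure, and the identity $\mathcal{K}_{\s,r}\vp=A_{\g/\s,r}\vp$ extends to the closure by continuity, since both operators are bounded on $X$ by \eqref{a1r} and Lemma~\ref{lemm1}(i). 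This yields the lower estimate with constants independent of $\s$ and $f$.

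For the upper estimate we apply Theorem~\ref{thstr}, which uses \eqref{a1r} and \eqref{a3r}, to get $\|f-\mathcal{K}_{\s,r}f\|_X\le C\|(I-\dot A_{\g/\s})^s f\|_X$. The hypothesis $r=2s$ is needed exactly here: Lemma~\ref{lemm1}(ii) applies with $r$ replaced by $s$ and gives $\|(I-\dot A_{\g/\s})^{2s}f\|_X\asymp\|(I-A_{\g/\s})^s f\|_X$, which is a comparison between orders $2s$ and $s$, not between $s$ and $r$. Since $r=2s$, we instead use the monotonicity $\|(I-\dot A_h)^{s}f\|_X\le \|f\|_X$-type bounds only in the form appropriate to the statement: the intended reading, consistent with the periodic Proposition~\ref{propdopconv} and Lemma~\ref{lemm1}(ii), is that the $\dot A$-quantity of order $s$ is comparable to the symmetric quantity. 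Concretely, we will establish the direct estimate against $\|(I-\dot A_{\g/\s})^{2s}f\|_X$, i.e.\ apply Theorem~\ref{thstr} with the smoothness parameter $2s=r$ (which requires \eqref{a3r} at order $r$), and then Lemma~\ref{lemm1}(ii) converts this into the bound by $\|(I-A_{\g/\s})^{s}f\|_X$. The main obstacle is precisely this index bookkeeping: whether \eqref{a3r} is available at order $2s$, or whether Lemma~\ref{lemm1}(ii) can be applied with the stated exponents so that the two sides match exactly as in the periodic proof. By contrast, the lower-estimate argument is a line-by-line transfer of the periodic proof.
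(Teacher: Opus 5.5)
Your lower estimate is correct and is exactly the paper's argument: the proof of Proposition~\ref{propdopconv} transferred to $\R$. The closure remark is fine. You do not even need to extend the identity to $\overline{\Phi_\s}$, since $E(f,\Phi_\s)_X=E(f,\overline{\Phi_\s})_X\le\|f-\mathcal{K}_{\s,r}f\|_X$ while $\vp$ can still be taken in $\Phi_\s$.

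The upper estimate, however, is not proved. Your chain ends with $\|f-\mathcal{K}_{\s,r}f\|_X\le C\|(I-A_{\g/\s})^{s}f\|_X$, which is not the claimed right-hand side $\|(I-A_{\g/\s})^{r}f\|_X=\|(I-A_{\g/\s})^{2s}f\|_X$. It is the weaker bound, because $\|(I-A_{\g/\s})^{2s}f\|_X\le C\|(I-A_{\g/\s})^{s}f\|_X$ by Lemma~\ref{lemm1}(i). It also relies on \eqref{a3r} at order $2s$, which is not among the hypotheses.

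The missing observation is which orders Lemma~\ref{lemm1}(ii) pairs: $\|(I-\dot A_h)^{2r}f\|_X\asymp\|(I-A_h)^{r}f\|_X$. So the paper's one-line upper bound (Theorem~\ref{thstr} followed by Lemma~\ref{lemm1}(ii)) closes exactly when the order in \eqref{a3r} equals $2r$, i.e.\ $s=2r$:
\[
\|f-\mathcal{K}_{\s,r}f\|_X\le C\|(I-\dot A_{\g/\s})^{2r}f\|_X\le C\|(I-A_{\g/\s})^{r}f\|_X .
\]
This is the intended hypothesis, and ``$r=2s$'' appears to be a misprint for $s=2r$. Two things in the paper point this way:
\begin{enumerate}
\item the constraint $2r\ge s$ in Theorem~\ref{thCr} and Propositions~\ref{prex2r} and~\ref{prex2pr2r};
\item the examples: in Proposition~\ref{cor2+} one has $r=1$ and \eqref{A2} is verified with $n^{-2}\|T''\|_X$, i.e.\ $s=2$.
\end{enumerate}
With the literal indices, \eqref{a3r} at order $s$ only lets Theorem~\ref{thstr} give a bound by $\|(I-\dot A_{\g/\s})^{s}f\|_X$. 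That does not imply a bound by the smaller quantity $\|(I-A_{\g/\s})^{2s}f\|_X\asymp\|(I-\dot A_{\g/\s})^{4s}f\|_X$.

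If you want to keep $r$ and $s$ unrelated, the extra reproduction order has to come from the identity hypothesis itself, and it is order $2r$, not $2s$. When $\mathcal{B}_X^\s\subset\Phi_\s$ (as in the examples), Lemma~\ref{lemm1}(iii) gives, for $g\in\mathcal{B}_X^\s$,
\[
\|g-\mathcal{K}_{\s,r}g\|_X=\|(I-A_{\g/\s})^{r}g\|_X\le C\s^{-2r}\|g^{(2r)}\|_X .
\]
This is \eqref{a3r} with $s$ replaced by $2r$. After that, Theorem~\ref{thstr} and Lemma~\ref{lemm1}(ii) yield the upper bound.
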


\subsection{Examples} Similarly to Section~\ref{secext}, we discuss several classical examples of operators on $\R$, for which the above results can be applied.

We denote
\begin{equation}\label{Qr}
      \mathcal{K}_{\s,r}^\vp f(x):=\sum_{k\in\Z}A_{\g/\s,r}f\(\s^{-1}k\)V^\vp\(\s x-k\),
\end{equation}
where $V^\vp (x):=\widehat{\vp}(x)$ and $\vp$ is an even, bounded function with a compact support.

In what follows, to simplify the notation, we denote
$$
\mathcal{K}_n^\vp f := \mathcal{K}_{n,1}^\vp f
$$
and
$$
V_\s^\vp(x):=\s V^\vp (\s x).
$$

For our purposes, we employ the sampling operators
\begin{equation}\label{QrG}
  \mathcal{G}_{\sigma}^\varphi f(x) := \sum_{k\in \mathbb{Z}} f\left(\sigma^{-1}k\right) V^\varphi(\sigma x - k)
\end{equation}
together with the identity
\begin{equation}\label{eqKGr}
  \mathcal{K}_{\sigma,r}^\varphi f(x) = \mathcal{G}_\sigma^\varphi (A_{\gamma/\sigma,r}f)(x).
\end{equation}

Assume that $X$ satisfies Assumption~A. Since $V_\sigma^\varphi \in \mathcal{B}_X^{\lambda\sigma}$
for some $\lambda>0$, it follows that $V_\sigma^\varphi (\cdot-t) \in X$ for all $t \in \mathbb{R}$.
Indeed, by applying the Taylor formula and the Bernstein-type inequality
from Lemma~\ref{lemm3}(i), we obtain
\begin{equation*}
  \|V_\sigma^\varphi (\cdot-t)\|_X
  \le \sum_{\nu=0}^\infty \frac{\|(V_\sigma^\varphi)^{(\nu)}\|_X}{\nu!} |t|^\nu
  \le e^{C\lambda\sigma |t|} \|V_\sigma^\varphi\|_X < \infty.
\end{equation*}
In particular, this ensures that for every $g \in X'$, the convolution $g * V_\sigma^\varphi$
is well-defined in the classical sense.

We now prove several propositions that can be used to verify assumption~\eqref{a1r}--\eqref{a4r} for the operators~$\mathcal{K}_{\s,r}^\vp$.

\begin{proposition}\label{prex1r}
   Let $X(\R)$ be a Banach lattice satisfying Assumption~A such that
   \begin{equation}\label{zv1r}
     \sup_{\s\ge 1}\sup_{\|g\|_{X'}\le 1}\|g*V_\s^\vp\|_{X'}<\infty.
   \end{equation}
   Then

   {\rm (i)} for all $f\in X_\s$, the series in~\eqref{QrG} converges in $X$, and the operator $\mathcal{G}_\s^\vp$ satisfies~\eqref{b1r}.

   {\rm (ii)} for all $f\in X$ and $r\in \N$, the series in~\eqref{Qr} converges in $X$, and the operator $\mathcal{K}_{\s,r}^\vp$ satisfies~\eqref{a1r}.

   In particular, \eqref{zv1r} holds if $\widehat{\vp}\in \mathcal{R}^*$.
\end{proposition}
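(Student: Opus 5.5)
We prove (i) by duality, (ii) by combining (i) with inequality~\eqref{Asfx}, and we handle the sufficient condition $\widehat{\vp}\in\mathcal{R}^*$ separately at the end.

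\emph{Part (i): the pairing estimate.} Fix $f\in X_\s$ and a finite set $F\subset\Z$, and set $S_F=\sum_{k\in F} f(k/\s)V^\vp(\s\cdot-k)$. For $g\in X'$ with $\|g\|_{X'}\le 1$, Fubini gives
\begin{equation*}
\int_\R S_F(x)\overline{g(x)}\,dx=\sum_{k\in F} f(k/\s)\,\s^{-1}\,\overline{\widetilde g*V^\vp_\s(k/\s)},
\end{equation*}
where $\widetilde g(x)=g(-x)$ and we use that $V^\vp$ is even (since $\vp$ is even). The sum is a Riemann-type sum, so we compare it with an integral. We write $\s^{-1}|h(k/\s)|$ with $h=\widetilde g*V^\vp_\s$. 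Since $h$ is band-limited of type $\lambda\s$, we estimate
\begin{equation*}
|h(k/\s)|\le |h(y)|+\int_{I_k}|h'|
\end{equation*}
for $y\in I_k=[k/\s,(k+1)/\s)$. Averaging over $I_k$ bounds the sum by
\begin{equation*}
\int\sum_{k\in F}|f(k/\s)|\chi_{I_k}(y)\bigl(|h(y)|+\s^{-1}\,\dot A_{1/\s}|h'|(y)\bigr)\,dy.
\end{equation*}
By the definition of the associate norm this is at most
\begin{equation*}
\Big\|\sum_{k\in F}|f(k/\s)|\chi_{I_k}\Big\|_X\Big(\|h\|_{X'}+\s^{-1}\|\dot A_{1/\s}|h'|\|_{X'}\Big).
\end{equation*}
The second factor is $\le C\|h\|_{X'}\le C$: we use Assumption~A for $X'$, the Bernstein inequality of Lemma~\ref{lemm3}(i) in $X'$, and \eqref{zv1r}. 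Here the nodes are $x_{k,\s}=k/\s$, so the mesh is $1/\s$; if $\g\neq1$ the argument adapts by rescaling.

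\emph{Convergence and \eqref{b1r}.} The pairing estimate shows $\|S_F\|_X\le C\|\sum_{k\in F}|f(k/\s)|\chi_{I_k}\|_X$, using the norming property of $X''=X$ (Fatou property). Applying this to $F=\{k:\ M<|k|\le N\}$ and using order continuity of the norm in $X$, the tails $\|\sum_{|k|>M}|f(k/\s)|\chi_{I_k}\|_X\to0$. Hence the partial sums are Cauchy in $X$, the series converges, and \eqref{b1r} holds.

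\emph{Part (ii).} Apply (i) to $A_{\g/\s,r}f$ using the identity~\eqref{eqKGr}. We need $\|A_{\g/\s,r}f\|_{X_\s}\le C\|f\|_X$. Writing $A_{\g/\s,r}$ as a finite combination of $A_{\g/\s}^k=A_{\g/\s}A_{\g/\s}^{k-1}$, this follows from~\eqref{Asfx} applied to $A_{\g/\s}^{k-1}f$ together with Assumption~A. (The relevant mesh-size discrepancy is absorbed by~\cite[Lemma~3.1]{K26}.)

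\emph{The sufficient condition.} Note that $V_\s^\vp$ is a dilation of $\widehat{\vp}$, so $(V_\s^\vp)^*$ can be taken as the corresponding dilate of $(\widehat\vp)^*$, and its $L_1$ norm is independent of $\s$. Lemma~\ref{lemS} applied in $X'$ (which satisfies Assumption~A) then gives~\eqref{zv1r}.

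The main obstacle is the sampled-value-to-integral comparison in part (i). This step needs the Bernstein inequality in $X'$ for $h=\widetilde g*V_\s^\vp$, and for that we must check that $h$ lies in $\mathcal{B}_{X'}^{\lambda\s}$.
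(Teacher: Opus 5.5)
Your part (ii) is the paper's argument: apply (i) to $A_{\g/\s,r}f$ via \eqref{eqKGr} and use \eqref{Asfx}. Your reduction $A^k_{\g/\s}f=A_{\g/\s}(A^{k-1}_{\g/\s}f)$ plus Assumption~A fills in a step the paper leaves implicit, since \eqref{Asfx} is stated only for $A_{\g/\s}$. For (i) and for the sufficiency of $\widehat\vp\in\mathcal R^*$, the paper gives no argument and cites \cite[Proposition~4.1]{K26}. Your sufficiency argument is correct: Lemma~\ref{lemS} in $X'$ with the dilated majorant $\s(\widehat\vp)^*(\s\,\cdot)$, whose $L_1$-norm does not depend on $\s$. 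Your proof of (i) is a sound, self-contained substitute for the citation. It goes by duality plus a Plancherel--P\'olya-type discretization in $X'$: bound the samples of $h$ by $|h|$ and a local average of $|h'|$, then use Assumption~A and Bernstein in $X'$, then order continuity for the tails. It also shows exactly where \eqref{zv1r}, the Fatou property and order continuity enter. Three details need repair, none of them structural.

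First, no reflection should appear. Since $V^\vp$ is even, $\int V^\vp(\s x-k)\overline{g(x)}\,dx=\s^{-1}(\bar g*V^\vp_\s)(k/\s)$, so take $h=\bar g*V^\vp_\s$. With $\widetilde g$ you would need $\|\widetilde g\|_{X'}\le C\|g\|_{X'}$. That fails in non-symmetric lattices, for example weighted spaces with a non-even weight, and then \eqref{zv1r} would not apply.

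Second, for $y\in I_k$ the forward average $\dot A_{1/\s}|h'|(y)=\s\int_y^{y+1/\s}|h'|$ does not dominate $\int_{I_k}|h'|$. Use instead $\int_{I_k}|h'|\le\int_{y-1/\s}^{y+1/\s}|h'|=\frac2\s A_{2/\s}|h'|(y)$, which Assumption~A in $X'$ handles.

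Third, the membership $h\in\mathcal B^{\lambda\s}_{X'}$, which you flag but leave open, is routine. $h\in X'$ by \eqref{zv1r}. $h$ is tempered, because property 3) and Assumption~A in $X'$ give $\int_{-R}^R|h|\le 2(R+1)\int_0^1 A_{2R+2}|h|\le C(1+R)\|h\|_{X'}$. Now let $\psi\in\mathcal S$ with $\supp\widehat\psi\cap[-\lambda\s,\lambda\s]=\emptyset$. Fubini is justified because $|V^\vp_\s|*\psi^*\in X$ by Lemma~\ref{lemS}, as $V_\s^\vp\in X$. It gives $\int h\psi=\int\bar g\,(V^\vp_\s*\psi)=0$, since $V^\vp_\s\in L_2$ has spectrum in $[-\lambda\s,\lambda\s]$.

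You also tacitly need $V^\vp(\s\cdot-k)\in X$, so that $S_F\in X$ and the norming identity $\|S_F\|_X=\sup_{\|g\|_{X'}\le1}|\int S_F\bar g|$ applies. This is the translation estimate established just before the proposition.
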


\begin{proof}
Statement (i), as well as the sufficiency of the condition $\widehat{\varphi} \in \mathcal{R}^*$
for the fulfillment of~\eqref{zv1r}, was established in~\cite[Proposition~4.1]{K26}.

To establish statement (ii), we note that the convergence of the series in~\eqref{Qr} in the norm of $X$
follows from statement (i) combined with the fact that $A_{\gamma/\s,r}f \in X_\sigma$
by virtue of~\eqref{Asfx}. Furthermore, utilizing the identity~\eqref{eqKGr},
Lemma~\ref{prex1}, and again~\eqref{Asfx}, we obtain
\begin{equation*}
  \|\mathcal{K}_{\sigma,r}^\varphi f\|_X
  = \|\mathcal{G}_{\sigma}^\varphi A_{\gamma/\sigma,r}f\|_X
  \le C \|A_{\gamma/\sigma,r}f\|_{X_\sigma}
  \le C \|f\|_X.
\end{equation*}
\end{proof}

\begin{lemma}\label{prex1r+}
  Let $g\in \mathcal{B}_{X}^{(2\pi-a)\s}$, $V_\s^\vp \in \mathcal{B}_{X'}^{a\s}$ for some $a\in (0,2\pi)$, and $r\in \N$. Then
  \begin{equation}\label{po}
    \mathcal{K}_{\s,r}^\vp g=A_{\g/\s,r}g*V_\s^\vp.
  \end{equation}
\end{lemma}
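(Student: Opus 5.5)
The plan is to reduce the claim to the corresponding sampling identity for $\mathcal{G}_\s^\vp$ via~\eqref{eqKGr}, and then prove that identity by Poisson summation. Set $h:=A_{\g/\s,r}g$. First, $h$ is again band-limited with the same spectrum. Indeed, $A_{\g/\s}g=g*\frac{\s}{\g}\chi_{[-\g/(2\s),\g/(2\s)]}$, so on the Fourier side $\widehat{A_{\g/\s}g}(\xi)=\sinc(\g\xi/\s)\,\widehat g(\xi)$, with $\sinc$ smooth. Hence $\supp\widehat h\subset[-(2\pi-a)\s,(2\pi-a)\s]$. By Assumption~A, $h\in X$, so $h\in\mathcal{B}_X^{(2\pi-a)\s}$. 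By~\eqref{eqKGr}, $\mathcal{K}_{\s,r}^\vp g=\mathcal{G}_\s^\vp h$. It therefore suffices to prove
$$
\mathcal{G}_\s^\vp h=h*V_\s^\vp,\qquad h\in\mathcal{B}_X^{(2\pi-a)\s},\ V_\s^\vp\in\mathcal{B}_{X'}^{a\s}.
$$
Here the convolution is understood with normalization consistent with $V_\s^\vp(x)=\s V^\vp(\s x)$, i.e.\ $\frac1\s\sum_k h(k/\s)V_\s^\vp(x-k/\s)$ is the Riemann sum of $\int h(t)V_\s^\vp(x-t)\,dt$.

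For fixed $x$, I would apply the Poisson summation formula to $F_x(t):=h(t)V_\s^\vp(x-t)$ on the lattice $\s^{-1}\Z$:
$$
\frac1\s\sum_{k\in\Z}F_x(k/\s)=\sum_{j\in\Z}\widehat{F_x}(2\pi j\s)\quad(\text{up to the }2\pi\text{ normalization}).
$$
Since $\supp\widehat{F_x}\subset\supp\widehat h+\supp\widehat{V_\s^\vp(x-\cdot)}\subset[-(2\pi-a)\s-a\s,\,(2\pi-a)\s+a\s]=[-2\pi\s,2\pi\s]$, only $j=0$ can contribute, except at the endpoints $\pm2\pi\s$. The $j=0$ term equals $\int h(t)V_\s^\vp(x-t)\,dt=h*V_\s^\vp(x)$, which gives the identity pointwise. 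The left side equals $\mathcal{G}_\s^\vp h(x)$, because the series converges in $X$ by Proposition~\ref{prex1r}(i) (note $h\in X_\s$ by~\eqref{Asfx} applied to $g$ combined with $A_{\g/\s,r}$), and convergence in $X$ yields a.e.\ convergence along a subsequence. Property 3) of the lattice, applied locally, identifies the limits.

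The main obstacle is justifying Poisson summation with only membership in $X$ and $X'$, and handling the boundary points $\pm2\pi\s$, where the supports may touch. I would first establish that $F_x\in L_1(\R)$: $h\in X$ and $V_\s^\vp(x-\cdot)\in X'$ (as shown before Proposition~\ref{prex1r}, using the exponential bound for translates), so Hölder's inequality for associate spaces gives integrability. Next, $\widehat{F_x}$ is then a continuous function supported in $[-2\pi\s,2\pi\s]$, so it vanishes at the endpoints and the $j\neq0$ terms vanish. To make the Poisson formula legitimate, I would use the Plancherel--Pólya type fact that an integrable entire function of exponential type has summable samples and is of bounded variation. Alternatively, I would avoid this by a density argument: approximate $h$ by $h_\varepsilon(t)=h(t)\theta(\varepsilon t)$ with $\widehat\theta$ compactly supported, shrinking the type slightly by replacing $a$ with a smaller margin, apply the classical Poisson formula to Schwartz-type functions, and pass to the limit $\varepsilon\to0$ using property 2) and the boundedness in Proposition~\ref{prex1r}.
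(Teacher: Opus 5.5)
Your proposal is correct and is essentially the paper's argument: pass to $h=A_{\g/\s,r}g\in\mathcal{B}_X^{(2\pi-a)\s}$ via Assumption~A, observe that $h\,V_\s^\vp(x-\cdot)\in L_1(\R)$, bound the spectrum of this product, and apply Poisson summation on $\s^{-1}\Z$. Your version is more careful than the paper's sketch in two places: the hypotheses give spectrum in $[-2\pi\s,2\pi\s]$ rather than the paper's $[-\pi\s,\pi\s]$, with the aliasing terms at $\pm2\pi\s$ killed by continuity of $\widehat{F_x}$, and your Plancherel--P\'olya step justifies the summation formula. One caveat: routing the identification of limits through Proposition~\ref{prex1r}(i) imports hypothesis~\eqref{zv1r}, which you can avoid, since the pointwise series converges absolutely and the standing assumption of Section~4 already gives convergence in $X$.
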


\begin{proof}
 Using Assumption~A, it is easy to see that $A_{\g/\s,r}g\in \mathcal{B}_{X}^{(2\pi-a)\s}$. Hence,  since $A_{\g/\s,r}g\, V_\s^\vp(\cdot-t)\in L_1(\R)$ for all $t\in \R$, and since $\supp \mathcal{F}(A_{\g/\s,r}g\, V_\s^\vp)\subset [-\pi\s,\pi\s]$, see, e.g.,~\cite[Theorem~6.37]{R73}, the Poisson summation formula yields~\eqref{po}.
\end{proof}

We now provide an analogue of Proposition~\ref{prex2}.
Recall that $\eta$ denotes a function in $C^\infty(\R)\cap \mathcal{R}$ such that $\eta(\xi)=1$ for $|\xi|\le 1$ and $\eta(\xi)=0$ for $|\xi|\ge 2$.

\begin{proposition}\label{prex2r}
   Let $X(\R)$ be a Banach lattice satisfying Assumption A and let
   $$
   w(\xi)=\frac{1-\vp(\xi)}{\xi^s}\eta(\xi),
   $$
  where $\supp \vp\subset [-2\pi+1,2\pi-1]$ and $w$ is understood to be continuously extended to $\xi=0$.
If $\widehat{w}\in\mathcal R^*$, then the operators $\mathcal{K}_{\s,r}^\vp$ ($r\in \N$ with $2r\ge s$) and $\mathcal{G}_\s^\vp$ satisfy~\eqref{a3r} and~\eqref{b2r}, respectively.
\end{proposition}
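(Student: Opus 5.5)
The plan follows the periodic case (Proposition~\ref{prex2} together with Remark~\ref{remGn}). I would first prove \eqref{b2r} for $\mathcal{G}_\s^\vp$ and then transfer it to $\mathcal{K}_{\s,r}^\vp$ through the identity~\eqref{eqKGr}.

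\textbf{Step 1: a convolution formula for $\mathcal{G}_\s^\vp$.} Fix $g\in\mathcal{B}_X^\s$. Since $\supp\vp\subset[-2\pi+1,2\pi-1]$, we have $V_\s^\vp\in\mathcal{B}_{X'}^{a\s}$ with $a=2\pi-1$. Also $g\in\mathcal{B}_X^{\s}\subset\mathcal{B}_X^{(2\pi-a)\s}$, because $2\pi-a=1$. The Poisson summation argument of Lemma~\ref{prex1r+}, run without the averaging operator, then gives
\begin{equation*}
\mathcal{G}_\s^\vp g=g*V_\s^\vp .
\end{equation*}
This is valid up to the normalisation of the Fourier transform, which I would fix so that $\widehat{V_\s^\vp}=\vp(\cdot/\s)$. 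Since $\eta=1$ on $[-1,1]\supset\supp\widehat g(\s\,\cdot)$, the multiplier $1-\vp(\xi/\s)$ can be written on the spectrum of $g$ as $(\xi/\s)^s w(\xi/\s)$. Formally this yields
\begin{equation*}
g-\mathcal{G}_\s^\vp g=\s^{-s}c_s\, g^{(s)}*W_\s,\qquad W_\s(x)=\s\widehat{w}(\s x),
\end{equation*}
with $|c_s|=1$ (coming from $(i\xi)^s$).

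\textbf{Step 2: the norm estimate.} We have $W_\s\in\mathcal{R}^*$ with $\|W_\s^*\|_{L_1}=\|\widehat w^{\,*}\|_{L_1}$, independent of $\s$. Lemma~\ref{lemS} then gives
\begin{equation*}
\|g-\mathcal{G}_\s^\vp g\|_X\le C\s^{-s}\|g^{(s)}\|_X,
\end{equation*}
which is \eqref{b2r}.

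\textbf{Step 3: justifying the formal identity (main obstacle).} I expect this step to be the real difficulty. $g$ is only a tempered distribution in $X$, not necessarily in $L_1$ or $L_2$, so the Fourier-multiplier manipulation needs care. Here is what I would try first.
\begin{itemize}
\item Test both sides against Schwartz functions $\psi$ with $\supp\widehat\psi$ compact, using that $\widehat w$ and $V^\vp$ are integrable and bounded.
\item Observe that both sides are then distributions with spectrum in $[-2\s,2\s]$, where the identity $\vp(\xi/\s)\,\eta(\xi/\s)$-multiplier $=$ multiplier holds.
\item Use that both sides lie in $X$ (by Lemma~\ref{lemS} and Proposition~\ref{prex1r}), so equality as distributions gives equality a.e.
\end{itemize}
Alternatively, one could approximate $g$ by $g\cdot\theta(\epsilon\,\cdot)$ with $\widehat\theta$ compactly supported. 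This keeps the band-limitation up to $\s+\epsilon$, which is still within $2\pi-a$ if $\epsilon\s<$ the available margin. One would then pass to the limit using the Fatou property 2) of $X$.

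\textbf{Step 4: transfer to $\mathcal{K}_{\s,r}^\vp$.} I would copy Remark~\ref{remGn}. By Assumption~A, $A_{\g/\s,r}g\in\mathcal{B}_X^\s$. Then
\begin{equation*}
\|g-\mathcal{K}_{\s,r}^\vp g\|_X\le\|(I-A_{\g/\s})^rg\|_X+\|A_{\g/\s,r}g-\mathcal{G}_\s^\vp A_{\g/\s,r}g\|_X .
\end{equation*}
\begin{itemize}
\item The first term is bounded by Lemma~\ref{lemm1}(ii), then (iii) with $s\le 2r$, then (i), giving $C\s^{-s}\|g^{(s)}\|_X$.
\item For the second term, apply \eqref{b2r} to $A_{\g/\s,r}g$. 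Use that $A_{\g/\s,r}$ commutes with differentiation and is bounded by Assumption~A, so $\|(A_{\g/\s,r}g)^{(s)}\|_X\le C\|g^{(s)}\|_X$.
\end{itemize}
This gives \eqref{a3r}.
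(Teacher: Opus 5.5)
Your proposal is correct in outline and follows the paper's route. The paper obtains \eqref{b2r} for $\mathcal{G}_\s^\vp$ by citing \cite[Proposition~4.3(i)]{K26}, which you reconstruct via the same Poisson-summation/multiplier argument the paper uses for Proposition~\ref{prex2pr2r}, and then transfers it to $\mathcal{K}_{\s,r}^\vp$ via Remark~\ref{remGnr}, exactly as in your Step~4.

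Two small points in the sketch. First, $V_\s^\vp\in X'$, which Lemma~\ref{prex1r+} needs, does not follow from the support condition alone. Second, $(2\pi-a)\s=\s$, and $\eta(\cdot/\s)=1$ is only guaranteed on $[-\s,\s]$ (not on $[-2\s,2\s]$ as you state in Step~3), so there is no spectral margin for your alternative approximation by $g\,\theta(\epsilon\,\cdot)$; rely on the distributional route instead.
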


\begin{proof}
The assertion of the proposition for the general sampling operator $\mathcal{G}_s^\varphi$
was established in~\cite[Proposition~4.3(i)]{K26}. Consequently, the statement
for the Kantorovich-type operator $\mathcal{K}_{\sigma,r}^\varphi$ follows directly
from Remark~\ref{remGnr}.
\end{proof}

\begin{proposition}\label{prex2pr2r}
   Let $X$ be a Banach lattice satisfying Assumption A and let
     \begin{equation*}
     v(\xi)=\frac{\xi^s\eta(\xi)}{1-\vp(\xi)\(1-\(1-\sinc(\g\xi)\)^r\)},
    \end{equation*}
    where $\supp \vp\subset [-2\pi+1,2\pi-1]$ and $v$ is understood to be continuously extended to $\xi=0$.
    If $\widehat{v}\in\mathcal R^*$, then the operator $\mathcal{K}_{\s,r}^\vp$ ($r\in\N$ with $2r\ge s$) satisfies~\eqref{a4r}.
\end{proposition}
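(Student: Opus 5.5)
We follow the proof of Proposition~\ref{prex2pr2}, replacing the periodic quadrature formula by Lemma~\ref{prex1r+} and periodic kernels by band-limited ones. We take $\Phi_\s=\mathcal{B}_X^\s$. Since $\supp\vp\subset[-2\pi+1,2\pi-1]$, the kernel $V_\s^\vp=\s\widehat{\vp}(\s\,\cdot)$ has spectrum in $[-(2\pi-1)\s,(2\pi-1)\s]$. It belongs to $\mathcal{B}_{X'}^{a\s}$ with $a=2\pi-1$ (membership in $X'$ should follow from $\widehat{v}\in\mathcal R^*$ and the boundedness of $\vp$; if needed we argue by density). Every $g\in\mathcal{B}_X^\s$ then lies in $\mathcal{B}_X^{(2\pi-a)\s}=\mathcal{B}_X^{\s}$. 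Lemma~\ref{prex1r+} therefore gives
$$
\mathcal{K}_{\s,r}^\vp g=A_{\g/\s,r}g*V_\s^\vp,\qquad g\in\mathcal{B}_X^\s .
$$

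We compute the Fourier multiplier of $g-\mathcal{K}_{\s,r}^\vp g$. The operator $A_h$ acts as multiplication by $\sinc(h\xi)$, so $A_{\g/\s,r}$ has multiplier $1-(1-\sinc(\g\xi/\s))^r$. Convolution with $V_\s^\vp$ has multiplier $\vp(\xi/\s)$, up to the normalization constant of the Fourier transform. Hence on the spectrum $|\xi|\le\s$ the multiplier of $I-\mathcal{K}^\vp_{\s,r}$ equals $1-\vp(\xi/\s)\bigl(1-(1-\sinc(\g\xi/\s))^r\bigr)$. Since $\eta(\xi/\s)=1$ there, the definition of $v$ yields
$$
\s^{-s}g^{(s)}=i^{-s}\,\bigl(g-\mathcal{K}_{\s,r}^\vp g\bigr)*v_\s^\vee,\qquad v_\s^\vee(x):=\s\,\widehat{v}(-\s x)\ \text{(up to constants)}.
$$
This identity holds in the distributional sense. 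The function $g-\mathcal{K}^\vp_{\s,r}g$ is band-limited with spectrum in $[-\s,\s]$, so only the values of $v$ on $[-1,1]$ matter. The dilate $v_\s^\vee$ has the same $L_1$ norm of its symmetrically decreasing majorant for every $\s$, namely $\|(\widehat v)^*\|_{L_1(\R)}$.

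We then apply Lemma~\ref{lemS} with $K=v_\s^\vee\in\mathcal{R}^*$:
$$
\s^{-s}\|g^{(s)}\|_X\le C\|(\widehat v)^*\|_{L_1}\,\|g-\mathcal{K}_{\s,r}^\vp g\|_X ,
$$
which is~\eqref{a4r}. The hypothesis $2r\ge s$ guarantees that $v$ is continuous at $0$. Near $0$ we have $1-\sinc(\g\xi)\sim c\xi^2$, so the denominator behaves like $1-\vp(\xi)+O(\xi^{2r})$, and $\xi^s$ compensates for it.

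The main obstacle is the justification of the multiplier identity on $\R$. Here $g\in X$ is only a tempered distribution and need not be in $L_2$, so we must make sure that the convolution of $g-\mathcal{K}g$ with the integrable kernel $v_\s^\vee$ really reproduces $\s^{-s}g^{(s)}$. We plan to check this through Fourier transforms of tempered distributions. The product $\widehat{g}\cdot m$ is well defined because the multiplier $m$ is smooth near the support $[-\s,\s]$, at least after multiplying by a smooth cutoff equal to $1$ on $[-\s,\s]$; $\sinc$ is smooth. We also need $\vp$ to be smooth near $[-1,1]$, or we approximate: if $\vp$ is merely bounded, we replace $\vp$ by $\vp\eta$-type smooth pieces and use the continuity of $v$ and the fact that the spectrum lies in the closed interval. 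The convergence of the series in~\eqref{Qr} needed for Lemma~\ref{prex1r+} is supplied by Proposition~\ref{prex1r}.
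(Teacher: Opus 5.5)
Your proposal is the paper's proof. Lemma~\ref{prex1r+} gives $\mathcal{K}_{\s,r}^\vp g=A_{\g/\s,r}g*V_\s^\vp$ on $\mathcal{B}_X^\s$; the multiplier computation turns this into $\s^{-s}g^{(s)}=c\,(g-\mathcal{K}_{\s,r}^\vp g)*V_\s^{v}$; and Lemma~\ref{lemS} with the dilation-invariant quantity $\|(\widehat v)^*\|_{L_1(\R)}$ finishes. You correctly keep the factor $\s^{-s}$, which the paper's displayed identity omits, and the distributional bookkeeping you worry about is what the paper delegates to the comparison principle for Fourier multipliers.

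Two side remarks are wrong, though neither is load-bearing:
\begin{itemize}
\item $2r\ge s$ does not make $v$ continuous at $0$. If $\vp\equiv1$ near $0$, the denominator is $(1-\sinc(\g\xi))^r\asymp\xi^{2r}$, so $|v(\xi)|\asymp|\xi|^{s-2r}$ is bounded only when $s\ge 2r$. The continuous extension of $v$ is a hypothesis of the statement, not a consequence of $2r\ge s$.
\item $V_\s^\vp\in X'$ does not follow from $\widehat v\in\mathcal R^*$, since $v$ only involves $\vp$ on $[-2,2]$. Exactly as in the paper, this hypothesis of Lemma~\ref{prex1r+} has to be taken as given. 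Likewise, the convergence of~\eqref{Qr} is a standing assumption of the non-periodic setting, not a consequence of Proposition~\ref{prex1r}, whose condition~\eqref{zv1r} is not assumed here.
\end{itemize}
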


\begin{proof}
  Applying Lemma~\ref{prex1r+} to $g\in\mathcal{B}_X^\s$, we obtain the identity
   \begin{equation*}
     g^{(s)} = (g - \mathcal{K}_{\s,r}^\vp g)*V_\s^v.
   \end{equation*}
     Using this relation together with the principle of comparison for Fourier multipliers (see, e.g.,~\cite[Ch.~7]{TB} or \cite[Sec.~2.4]{V23}), we conclude that  inequality~\eqref{a4r} holds for $\mathcal{K}_{\s,r}^\vp$  if and only if there exists a constant $C>0$ such that
  \begin{equation*}
    \sup_{\s\ge 1}\|g*V_\s^v\|_X\le C\|g\|_X,\quad g\in\mathcal{B}_X^\s.
  \end{equation*}
    The latter inequality follows from  Lemma~\ref{lemS} since
  \begin{equation*}
  \begin{split}
        \|g*V_\s^v\|_{X}\le C\s \|\widehat{v}^*(\s\cdot)\|_{L_1(\R)}\|g\|_{X}\le C\|g\|_{X}.
  \end{split}
  \end{equation*}
\end{proof}

\begin{example}\label{ex1r}
Consider the Kantorovh-type operator $\mathcal{K}_\s^{\vp_1}$ generated by $\vp_1=\chi_{[-\pi\s,\pi\s]}$. This operator can be expressed via the classical Whittaker--Kotelnikov--Shannon sampling expansion by
$$
\mathcal{K}_\s^{\vp_1} f(x)=\sum_{k\in\Z}A_{\g/\s}f\(\s^{-1}k\){\rm sinc}\(2\pi(\s x-k)\).
$$

We denote
$$
\mathcal{D}_\sigma f(x) := \mathcal F^{-1}\big(\chi_{[-\pi\sigma,\pi\sigma]}(\xi)\,\widehat f(\xi)\big)(x),\quad f\in \mathcal{S}.
$$

\begin{proposition}\label{cor2rs}
  Let $X(\R)$ be a Banach lattice satisfying Assumption~A with ${\rm sinc}\in X$ and
  \begin{equation}\label{fDnr}
    \sup_{\sigma\ge1}\|\mathcal{D}_\sigma\|_{X\to X}<\infty.
  \end{equation}
Then for all $f\in X$, we have
  \begin{equation*}
    \|f-\mathcal{K}_{\s}^{\vp_1} f\|_{X}\asymp\|(I-A_{\g/\s}) f\|_{X},
  \end{equation*}
  where $\asymp$ denotes a two-sided inequality with positive constants independent of $f$ and $\s$.
\end{proposition}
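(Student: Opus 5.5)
The plan mirrors Proposition~\ref{cor2+}: apply Proposition~\ref{propdopconvr} with $r=2$, $s=1$ to $\mathcal{K}_\s^{\vp_1}=\mathcal{K}_{\s,1}^{\vp_1}$. Here $A_{\g/\s,1}=A_{\g/\s}$ and $(I-A_{\g/\s})^1$ is the first power. Since $\|(I-A_h)f\|_X\asymp\|(I-\dot A_h)^2f\|_X$ by Lemma~\ref{lemm1}(ii), it is cleanest to argue directly, as in the periodic case, with the pair $(r,s)=(1,2)$. The lower bound is obtained as in~\eqref{dopconv1}--\eqref{dopconv2}. The upper bound comes from Theorem~\ref{thstr} with $s=2$ combined with Lemma~\ref{lemm1}(ii). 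So three ingredients must be verified:
\begin{itemize}
\item[(a)] the boundedness~\eqref{a1r} of $\mathcal{K}_\s^{\vp_1}$;
\item[(b)] the reproduction identity $\mathcal{K}_\s^{\vp_1} g=A_{\g/\s}g$ for $g$ in the approximating subspace $\Phi_\s$, taken to be band-limited functions of type $\pi\s$ (up to constant rescaling of $\s$);
\item[(c)] the Jackson-type estimate~\eqref{a3r} with $s=2$.
\end{itemize}

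For (a), I will use Proposition~\ref{prex1r}, which reduces the task to condition~\eqref{zv1r} in $X'$ for the kernel $V_\s^{\vp_1}$. Up to normalization, this kernel is the sinc kernel $\s\,{\rm sinc}(2\pi\s\,\cdot)$, and convolution with it is exactly the operator $\mathcal{D}_\s$. By duality, since $X'' = X$ by the Fatou property 2), the bound~\eqref{fDnr} gives $\sup_\s\|g*V_\s^{\vp_1}\|_{X'}\le C\|g\|_{X'}$. The operator is symmetric because $\vp_1$ is even, so its adjoint is again $\mathcal{D}_\s$. A density argument through $\mathcal{S}$ (dense in $X$) is needed to pass from Schwartz functions to all of $X'$. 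The hypothesis ${\rm sinc}\in X$ guarantees that each term of the series lies in $X$, so the series is meaningful; norm convergence then follows as in Proposition~\ref{prex1r}(i).

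For (b), take $g\in\mathcal{B}_X^{\pi\s}$ (or of slightly smaller type). Then $A_{\g/\s}g$ is band-limited of the same type, because its Fourier transform is multiplied by $\sinc(\g\xi/\s)$. The classical Whittaker--Kotelnikov--Shannon theorem then gives $\mathcal{G}_\s^{\vp_1}(A_{\g/\s}g)=A_{\g/\s}g$. This can be justified through Lemma~\ref{prex1r+} in its limiting form, or through the Poisson summation argument used there together with $\widehat{V^{\vp_1}}=\chi$ on the spectrum. Consequently $g-\mathcal{K}_\s^{\vp_1} g=(I-A_{\g/\s})g$. Lemma~\ref{lemm1}(iii) with $r=s=1$ gives $\|(I-A_{\g/\s})g\|_X\le C\s^{-2}\|g''\|_X$, which is~\eqref{a3r} with $s=2$, i.e.\ ingredient (c). The upper estimate is then $\|f-\mathcal{K}_\s^{\vp_1} f\|_X\le C\|(I-\dot A_{\g/\s})^2 f\|_X\le C\|(I-A_{\g/\s})f\|_X$. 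For the lower estimate, I will pick a near-best $\vp\in\mathcal{B}_X^{\pi\s}$ (rescaling $\s$ if needed). Using (a), (b) and Lemma~\ref{lemm1}(i), this gives $\|A_{\g/\s}f-\mathcal{K}_\s^{\vp_1} f\|_X\le C\|f-\vp\|_X$. Since $\mathcal{K}_\s^{\vp_1} f\in\mathcal{B}^{\pi\s}$, we have $\|f-\vp\|_X\le 2E\le 2\|f-\mathcal{K}_\s^{\vp_1} f\|_X$.

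The main obstacle is (b) at the endpoint. Lemma~\ref{prex1r+} requires the strict spectral condition $(2\pi-a)\s$ together with $V_\s^{\vp_1}\in\mathcal{B}_{X'}^{a\s}$, i.e.\ $a=\pi$. This fits exactly (type $\pi\s$ for both), but ${\rm sinc}$ is not integrable, so the Poisson summation step must be handled with care. I would first try proving the identity for $g\in\mathcal{S}$ with $\supp\widehat g$ strictly inside $(-\pi\s,\pi\s)$. I would then extend it by density within $\mathcal{B}_X^{\pi\s}$, using the boundedness from (a). Alternatively, I would work throughout with $\Phi_\s=\mathcal{B}_X^{\pi\s/2}$ and absorb the factor $2$ into the constants; this is harmless because $E$ over $\mathcal{B}^{\pi\s/2}$ is still controlled by the Steklov quantity via Lemma~\ref{lemm2}.
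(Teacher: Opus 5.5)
Your route is the paper's. You get \eqref{a1r} by duality from \eqref{fDnr} and Proposition~\ref{prex1r}. You get the reproduction $\mathcal{K}_\sigma^{\varphi_1}g=A_{\gamma/\sigma}g$ on band-limited $g$ from the Whittaker--Kotelnikov--Shannon formula, and \eqref{a3r} with $s=2$ from Lemma~\ref{lemm1}(iii). You then run the argument of Proposition~\ref{propdopconvr} with $(r,s)=(1,2)$; you are right that its hypothesis ``$r=2s$'' must be read as $s=2r$, since the upper bound passes through Lemma~\ref{lemm1}(ii). The paper simply calls the reproduction identity well known and never discusses the endpoint of the band, so raising that issue is to your credit. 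Your two ways of handling it, however, have problems.

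\textbf{The fallback fails.} Taking $\Phi_\sigma=\mathcal{B}_X^{\pi\sigma/2}$ breaks the lower estimate.
\begin{itemize}
\item Reproduction on $\Phi_\sigma$ only gives $\|A_{\gamma/\sigma}f-\mathcal{K}_\sigma^{\varphi_1}f\|_X\le C\,E(f,\Phi_\sigma)_X$.
\item The next step, $E(f,\Phi_\sigma)_X\le\|f-\mathcal{K}_\sigma^{\varphi_1}f\|_X$, needs $\mathcal{K}_\sigma^{\varphi_1}f\in\Phi_\sigma$. This is false, because the range fills the whole band $[-\pi\sigma,\pi\sigma]$.
\item The factor $2$ cannot be absorbed. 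You would need $E(f,\mathcal{B}_X^{\pi\sigma/2})_X\le C\|f-\mathcal{K}_\sigma^{\varphi_1}f\|_X$, which is itself a strong converse statement.
\item Lemma~\ref{lemm2} only serves the upper estimate. That estimate needs reproduction merely on $\mathcal{B}_X^{\sigma}$, strictly inside the band.
\end{itemize}

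\textbf{The primary option is unproved.} You assert that Schwartz functions with spectrum inside $(-\pi\sigma,\pi\sigma)$ are dense in $\mathcal{B}_X^{\pi\sigma}$. This is not obvious in a lattice without dilation invariance.

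\textbf{How to bypass the endpoint issue.} Set $\psi:=\mathcal{K}_\sigma^{\varphi_1}f$. Then
\[
A_{\gamma/\sigma}f-\mathcal{K}_\sigma^{\varphi_1}f=A_{\gamma/\sigma}(f-\psi)-\mathcal{K}_\sigma^{\varphi_1}(f-\psi)\quad\text{whenever}\quad\mathcal{K}_\sigma^{\varphi_1}\psi=A_{\gamma/\sigma}\psi.
\]
This bounds the left-hand side by $C\|f-\mathcal{K}_\sigma^{\varphi_1}f\|_X$ without any best approximant. So reproduction is needed only on the range, and there it is elementary.
\begin{itemize}
\item The partial sums $\psi_N$ of the series for $\psi$ are finite combinations of shifted sinc functions, so they lie in $L_2\cap X$.
\item Hence $A_{\gamma/\sigma}\psi_N\in L_2$ has spectrum in $[-\pi\sigma,\pi\sigma]$. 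The classical $L_2$ sampling theorem then gives $\mathcal{K}_\sigma^{\varphi_1}\psi_N=A_{\gamma/\sigma}\psi_N$.
\item The $X$-limit and the $L_2$-limit of the sampling series coincide a.e., since both imply convergence in $L_1$ on bounded sets.
\item Let $N\to\infty$. Using $\psi_N\to\psi$ in $X$, \eqref{a1r}, and Assumption~A, you obtain $\mathcal{K}_\sigma^{\varphi_1}\psi=A_{\gamma/\sigma}\psi$.
\end{itemize}
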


\begin{proof}
By duality, we have $\sup_{\sigma\ge1}\|\mathcal{D}_\sigma\|_{X'\to X'}<\infty$. Moreover, $\mathcal{D}_\s f=f*{\rm sinc}_\s$ for all $f\in X'$. Together with Proposition~\ref{prex1r}, this implies that for every $f\in X$ the series $\mathcal{K}_\s^{\vp_1} f$ converges in $X$, and  $\mathcal{K}_\s^{\vp_1}$ satisfies assumption~\eqref{a1r}.
The rest of the proof is analogous to the proof of Proposition~\ref{cor2+} and is based on the well-known equality $\mathcal{G}_\s^{\vp_1} g=g$ for all $g\in \mathcal{B}_X^\s$, Proposition~\ref{propdopconvr}, and Lemma~\ref{lemm1}(iii).
\end{proof}
\end{example}

\begin{example}\label{ex2r}
Recall that
$$
\rho(\xi)=(1-\xi^2)_+^\a,\quad \a>0.
$$

\begin{proposition}\label{brr}
  Let $X(\R)$ be a Banach lattice satisfying Assumption A. Then for all $f\in X(\R)$ we have
  \begin{equation*}
    \|f-\mathcal{K}_{\s}^\rho f\|_{X}\asymp \|(I-A_{\g/\s}) f\|_{X},
  \end{equation*}
where $\asymp$ denotes a two-sided inequality with positive constants independent of $f$ and $\s$.
\end{proposition}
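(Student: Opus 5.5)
The plan is to mirror the proof of Proposition~\ref{br}, this time on $\R$, and to finish with Theorem~\ref{thCr} (taking $s=2$, $r=1$, so that $2r\ge s$) together with Lemma~\ref{lemm1}(ii). In order, I will verify \eqref{a1r}, \eqref{a3r} and \eqref{a4r} for $\mathcal{K}_\s^\rho$. Theorem~\ref{thCr} then gives $\|f-\mathcal{K}_\s^\rho f\|_X\asymp\|(I-\dot A_{\g/\s})^2 f\|_X$. By Lemma~\ref{lemm1}(ii), the right-hand side is equivalent to $\|(I-A_{\g/\s})f\|_X$.

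\textbf{Condition \eqref{a1r}.} The classical decay estimate $|\widehat\rho(x)|\le C(1+|x|)^{-1-\a}$ shows that $\widehat\rho\in\mathcal R^*$. Proposition~\ref{prex1r} then applies directly: it gives \eqref{zv1r}, convergence of the series \eqref{Qr} in $X$, and \eqref{a1r}.

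\textbf{Condition \eqref{a3r}.} The function $\rho$ is even and bounded, and $\supp\rho\subset[-1,1]\subset[-2\pi+1,2\pi-1]$. In~\cite[Example~3.2]{K26} it was shown that $w=(1-\rho(\xi))\xi^{-2}\eta(\xi)$ has $\widehat w\in\mathcal R^*$. Proposition~\ref{prex2r} with $s=2$ therefore yields \eqref{a3r}.

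\textbf{Condition \eqref{a4r}.} I will apply Proposition~\ref{prex2pr2r} with $r=1$. Its function is
$$v(\xi)=\frac{\xi^2\eta(\xi)}{1-\rho(\xi)\sinc(\g\xi)},$$
which is exactly the $v$ treated in the proof of Proposition~\ref{br}. Moreover, the condition there ($\widehat v\in\mathcal R^*$) is a statement about a function on $\R$ that does not depend on the setting. So the same argument applies verbatim. Split $v=u\eta_0+u\eta_1$ with $\eta_0=\eta(4\cdot)$ and $\eta_1=\eta-\eta_0$.

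Near the origin, $u\eta_0\in W_1^2(\R)$. To see this, use $1-\rho(\xi)\sinc(\g\xi)\asymp\xi^2$ near $0$, which holds since $\sinc(\g\xi)<1$ for $\xi\ne0$ and the denominator is nonzero on $\supp\eta$ for small $\g$. Remark~\ref{reml1} then gives $\widehat{u\eta_0}\in\mathcal R^*$.

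Away from the origin, write
$$u\eta_1=\xi^2\eta_1\Bigl(\sum_{j=0}^m\psi^j+g\Bigr),\qquad \psi=\rho\,\sinc(\g\cdot),\quad m\ge 2/\a .$$
Each $\widehat{\psi^j}$ and $\mathcal F(\xi^2\eta_1)$ lies in $\mathcal R^*$. By Lemma~\ref{lebr}, $\xi^2\eta_1 g\in W_1^2(\R)$.

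\textbf{Main obstacle.} The expected difficulty is the behavior at the singular points $\xi=\pm1$. There one needs one-sided limits of $g'$ and $g''$ equal to zero, so that no distributional point masses appear in the second derivative. This is exactly what Lemma~\ref{lebr} provides, once $m\ge 2/\a$ is chosen. The other point to check is that the denominator $1-\psi$ stays away from $0$ on $\supp\eta_1$, which requires $|\psi|<1$ there. Once $\widehat v\in\mathcal R^*$ is established, Proposition~\ref{prex2pr2r} gives \eqref{a4r}, and Theorem~\ref{thCr} together with Lemma~\ref{lemm1}(ii) concludes the proof.
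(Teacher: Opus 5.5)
Your proposal is correct and follows the paper's own route: you verify \eqref{a1r} via Proposition~\ref{prex1r} and the decay of $\widehat\rho$, \eqref{a3r} via Proposition~\ref{prex2r} with the function $w$ from~\cite{K26}, and \eqref{a4r} via Proposition~\ref{prex2pr2r} with the same $v$ as in Proposition~\ref{br}, then conclude with Theorem~\ref{thCr} ($s=2$, $r=1$) and Lemma~\ref{lemm1}(ii), exactly as the paper does. One minor remark: no smallness of $\g$ is needed for the denominator, since $0\le\rho\le1$ and $|\sinc t|<1$ for $t\neq0$ already give $1-\rho(\xi)\sinc(\g\xi)>0$ for all $\xi\neq0$.
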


\begin{proof}
The proof is analogous to the proof of Proposition~\ref{br} and is based on Theorem~\ref{thCr} together with Propositions~\ref{prex2r} and~\ref{prex2pr2r}.
\end{proof}

We note that in the classical case $X=L_p(\mathbb{R})$, $1\le p<\infty$, analogues of Propositions~\ref{cor2rs} and~\ref{brr} were obtained in~\cite{KS21b}.
\end{example}

\begin{example}\label{ex1.5r}
Similarly as in the periodic case (see Example~\ref{ex1.5}), we have that the Kantorovich operators~\eqref{Qr}
generated by
$$
\vp_2(\xi)=\frac{\eta(2\xi)}{\sinc(\g\xi)}\quad\text{and}\quad \vp_3(\xi)=\frac{\chi_{[-1,1]}(\xi)}{\sinc(\g\xi)} ,\quad 0<\g\le\frac\pi2,
$$
provide the same order of convergence as the error of best approximation by functions from~$\mathcal{B}_X^\s$. Namely, repeating the proofs of Propositions~\ref{prKEas} and~\ref{prKEas2} and applying herewith Proposition~\ref{prex1r} and Lemma~\ref{prex1r+}, we obtain the following results.

\begin{proposition}\label{prKEasr}
  Let $X(\R)$ be a Banach lattice satisfying Assumption~A. Then for all $f\in X$, we have
  \begin{equation}\label{eqKEr}
    E_\s(f)_X\le \|f-\mathcal{K}_{\s}^{\vp_2} f\|_X\le CE_{\s/2}(f)_X,
  \end{equation}
  where the constat $C$ is independent of $f$ and $\s$.
\end{proposition}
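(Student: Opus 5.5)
The plan is to mirror the proof of Proposition~\ref{prKEas}, with three ingredients. First, boundedness~\eqref{a1r} of $\mathcal{K}_\s^{\vp_2}$ on $X$. Second, the reproducing identity $\mathcal{K}_\s^{\vp_2}g=g$ for all $g\in\mathcal{B}_X^{\s/2}$. Third, the inclusion $\mathcal{K}_\s^{\vp_2}f\in\mathcal{B}_X^{\s}$, suitably normalized in the frequency scale.

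For~\eqref{a1r} we apply Proposition~\ref{prex1r}(ii), so it suffices to check $\widehat{\vp_2}\in\mathcal{R}^*$. Since $0<\g\le\pi/2$ and $\supp\eta(2\cdot)\subset[-1,1]$, we have $\g|\xi|\le\pi/2$ on the support. There $\sinc(\g\xi)\ge 2\sqrt2/\pi>0$ is smooth, so $\vp_2\in C_c^\infty(\R)\subset W_1^2(\R)$. Remark~\ref{reml1} then gives $\widehat{\vp_2}\in\mathcal{R}^*$. Consequently the series~\eqref{Qr} converges in $X$ and $\|\mathcal{K}_\s^{\vp_2}f\|_X\le C\|f\|_X$ uniformly in $\s\ge1$.

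For the reproducing identity we use Lemma~\ref{prex1r+} with $r=1$. Since $V_\s^{\vp_2}$ has Fourier support in a set of size $O(\s)$ (after the paper's normalization of $\mathcal{F}$), we may take $a$ small enough and any $g\in\mathcal{B}_X^{\s/2}$ (certainly $\s/2\le(2\pi-a)\s$). The lemma gives $\mathcal{K}_\s^{\vp_2}g=A_{\g/\s}g*V_\s^{\vp_2}$. On the Fourier side, $A_{\g/\s}$ acts as multiplication by $\sinc(\g\xi/\s)$, and convolution with $V_\s^{\vp_2}$ multiplies by $\vp_2(\xi/\s)=1/\sinc(\g\xi/\s)$ wherever $\eta(2\xi/\s)=1$, i.e.\ for $|\xi|\le\s/2$. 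Hence the product multiplier is identically $1$ on $\supp\widehat g$, and $\mathcal{K}_\s^{\vp_2}g=g$.

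\textbf{Main obstacle.} The delicate step is matching the frequency normalizations: the convention $\mathcal{F}$ with $1/(2\pi)$, the scaling $V^{\vp}=\widehat\vp$, and the support of $\vp_2$ must be compatible with the indices $\s/2$ and $\s$, so that the multiplier equals $1$ on all of $[-\s/2,\s/2]$. The same bookkeeping is needed to justify the distributional product $\widehat{g}\cdot\vp_2(\cdot/\s)\sinc(\g\cdot/\s)=\widehat g$, using that the smooth multiplier equals $1$ on a neighbourhood of $\supp\widehat g$.

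With these facts in hand, choose $g\in\mathcal{B}_X^{\s/2}$ with $\|f-g\|_X\le 2E_{\s/2}(f)_X$. Then
\[
\|f-\mathcal{K}_\s^{\vp_2}f\|_X\le\|f-g\|_X+\|\mathcal{K}_\s^{\vp_2}(f-g)\|_X\le C\|f-g\|_X\le CE_{\s/2}(f)_X .
\]
For the lower bound in~\eqref{eqKEr}, note that each term $V^{\vp_2}(\s x-k)$ has spectrum in $[-\s,\s]$ under the paper's normalization, and convergence holds in $X$. Since $\mathcal{B}_X^\s$ is closed in $X$ (tempered distributions, supports preserved under limits), it follows that $\mathcal{K}_\s^{\vp_2}f\in\mathcal{B}_X^\s$. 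Therefore $E_\s(f)_X\le\|f-\mathcal{K}_\s^{\vp_2}f\|_X$.
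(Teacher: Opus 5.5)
Your proposal is correct and follows the paper's route: boundedness via Proposition~\ref{prex1r} using $\vp_2\in W_1^2(\R)$, the reproducing identity $\mathcal{K}_\s^{\vp_2}g=g$ on $\mathcal{B}_X^{\s/2}$ via Lemma~\ref{prex1r+} and the multiplier $\sinc(\g\xi/\s)\,\vp_2(\xi/\s)=\eta(2\xi/\s)$, the triangle inequality, and $\mathcal{K}_\s^{\vp_2}f\in\mathcal{B}_X^\s$ for the lower bound. One small correction: in Lemma~\ref{prex1r+} you need $a\ge 1$, not ``$a$ small enough'', so that $V_\s^{\vp_2}\in\mathcal{B}_{X'}^{a\s}$; the choice $a=1$ works because $\s/2\le(2\pi-1)\s$.
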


Under an additional restriction on $X(\R)$, we have more shaper version of relation~\eqref{eqKEr}.

\begin{proposition}\label{prKEasr2}
  Let $X(\R)$ be a Banach lattice satisfying Assumption~A and~\eqref{fDnr}. Then for all $f\in X$, we have
  \begin{equation*}
    \|f-\mathcal{K}_\s^{\vp_3} f\|_X\asymp E_\s(f)_X,
  \end{equation*}
where $\asymp$ is a two-sided inequality with positive constants independent of $f$ and $\s$.
\end{proposition}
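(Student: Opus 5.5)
The plan is to follow the periodic proof of Proposition~\ref{prKEas2}, replacing the trigonometric tools by their counterparts on $\R$: Proposition~\ref{prex1r} for boundedness and Lemma~\ref{prex1r+} for the reproduction of band-limited functions. The argument has three steps: verify \eqref{a1r}, show that $\mathcal{K}_\s^{\vp_3}$ reproduces $\mathcal{B}_X^\s$, and then get the upper and lower bounds from a near-best approximant. Throughout I work with the frequency normalization in which $\mathcal{B}_X^\s$ corresponds to $\supp\widehat g\subset[-\s,\s]$ after the $2\pi$ rescaling implicit in \eqref{Qr}, so that $V_\s^{\vp_3}\in\mathcal{B}^{a\s}$ with $a<2\pi$ as Lemma~\ref{prex1r+} requires. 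The constants are harmless here, since $\supp\vp_3\subset[-1,1]\subset[-2\pi+1,2\pi-1]$.

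\emph{Step 1 (assumption~\eqref{a1r}).} I factor $\vp_3=\chi_{[-1,1]}\cdot(\vp_3\eta)$, which gives on the kernel side
\[
V_\s^{\vp_3}=V_\s^{\chi_{[-1,1]}}*V_\s^{\vp_3\eta}.
\]
Since $0<\g\le\pi/2$, the function $\sinc(\g\xi)$ is bounded below on $[-2,2]$. Hence $\vp_3\eta=\eta/\sinc(\g\cdot)\in W_1^2(\R)$ (here $\eta/\sinc$ is used, with $\chi_{[-1,1]}\eta=\chi_{[-1,1]}$). By Remark~\ref{reml1}, $\widehat{\vp_3\eta}\in\mathcal{R}^*$, and the dilates $V_\s^{\vp_3\eta}$ have majorants with $\s$-uniformly bounded $L_1$ norms. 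Now take $g\in X'$. Condition~\eqref{fDnr} gives by duality $\sup_\s\|\mathcal{D}_\s\|_{X'\to X'}<\infty$ (exactly as in the proof of Proposition~\ref{cor2rs}), and convolution with $V_\s^{\chi_{[-1,1]}}$ is a (rescaled) operator of this type. Lemma~\ref{lemS}, applied in $X'$, which also satisfies Assumption~A, handles the second factor. Together these yield \eqref{zv1r} for $\vp_3$. Proposition~\ref{prex1r}(ii) then gives convergence of the series in $X$ and the bound $\|\mathcal{K}_\s^{\vp_3}f\|_X\le C\|f\|_X$.

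\emph{Step 2 (reproduction).} For $g\in\mathcal{B}_X^\s$, Lemma~\ref{prex1r+} with $r=1$ gives $\mathcal{K}_\s^{\vp_3}g=A_{\g/\s}g*V_\s^{\vp_3}$. On the Fourier side, $A_{\g/\s}$ is multiplication by $\sinc(\g\xi/\s)$ and the kernel is multiplication by $\chi_{[-\s,\s]}(\xi)/\sinc(\g\xi/\s)$. Their product equals $1$ on $\supp\widehat g$, so $\mathcal{K}_\s^{\vp_3}g=g$. To justify this identity for distributions, I plan to pair with Schwartz functions, or use the density of $\mathcal{S}$ together with the boundedness from Step~1.

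\emph{Step 3 (the two bounds).} For the upper bound, choose $g\in\mathcal{B}_X^\s$ with $\|f-g\|_X\le2E_\s(f)_X$. Then Steps~1 and~2 give
\[
\|f-\mathcal{K}_\s^{\vp_3}f\|_X\le\|f-g\|_X+\|\mathcal{K}_\s^{\vp_3}(f-g)\|_X\le CE_\s(f)_X.
\]
For the lower bound, $\mathcal{K}_\s^{\vp_3}f$ is a norm limit in $X$ of finite sums of translates of $V^{\vp_3}(\s\cdot)$, each with spectrum in $[-\s,\s]$. Because $\mathcal{B}_X^\s$ is closed in $X$ (convergence in $X$ implies convergence in $\mathcal{S}'$ via property~3), the limit lies in $\mathcal{B}_X^\s$, and so $E_\s(f)_X\le\|f-\mathcal{K}_\s^{\vp_3}f\|_X$.

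I expect Step~1 to be the main obstacle, specifically transferring \eqref{fDnr} to the $X'$-bound \eqref{zv1r} for the sharp-cutoff factor. The hypothesis is stated on $\mathcal{S}$, so the duality argument needs the density of $\mathcal{S}$ and the order continuity assumed in this section. It also needs care to check that the factorization of the kernel is legitimate pointwise, since the sinc-type factor is not in $\mathcal{R}^*$.
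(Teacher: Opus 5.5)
Your proposal is correct and is essentially the paper's own argument. The paper proves this statement by repeating the proof of Proposition~\ref{prKEas2} on $\R$: the multiplier is split into the sharp cutoff (controlled by \eqref{fDnr} via duality, as in Proposition~\ref{cor2rs}) times a smooth $W_1^2$ factor, which gives \eqref{a1r} through Proposition~\ref{prex1r}; Lemma~\ref{prex1r+} then yields $\mathcal{K}_\s^{\vp_3}g=g$ on $\mathcal{B}_X^\s$, and the proof finishes with a near-best approximant and the fact that $\mathcal{K}_\s^{\vp_3}f\in\mathcal{B}_X^\s$. Two small corrections: the smooth factor must be $\eta/\sinc(\g\cdot)$, not $\vp_3\eta$ (which equals $\vp_3$ and is discontinuous); and closedness of $\mathcal{B}_X^\s$ in $X$ comes from Assumption~A, since large-$h$ Steklov averages give $\int_\R|f(x)|(1+|x|)^{-3}\,dx\le C\|f\|_X$, whereas property 3) alone only gives local $L_1$ convergence.
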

\end{example}

\end{document}